\newcommand{\Aut}{\mathop{\mathrm{Aut}}\nolimits}
\newcommand{\De}{\frac{\mathrm{d}}{\mathrm{d}X}}
\newcommand{\Hom}{\mathop{\mathrm{Hom}}\nolimits}
\newcommand{\Ka}{K_{\mathrm{a}}}
\newcommand{\Kc}{K_{\mathrm{c}}}
\newcommand{\Kd}{K_{\mathrm{d}}}
\newcommand{\Maps}{\mathop{\mathrm{Maps}}\nolimits}
\newcommand{\prlim}{\mathop{\underleftarrow{\lim}}\limits}
\newcommand{\Sy}{\mathop{\mathfrak{S}}\nolimits}
\newcommand{\Sm}{\mathop{\mathrm{Sm}}\nolimits}
\newtheorem{theorem}{Theorem}[section] 
\newtheorem{conjecture}[theorem]{Conjecture}
\newtheorem{lemma}[theorem]{Lemma}
\newtheorem{proposition}[theorem]{Proposition}
\theoremstyle{remark} 
\newtheorem{example}[theorem]{Example}
\theoremstyle{remark} 
\newtheorem{remark}[theorem]{Remark}
\begin{document} 
\title{L\"uroth's theorem for fields of rational functions in infinitely many permuted variables} 
\author{M.Rovinsky} 
\address{National Research University Higher School of Economics, 
AG Laboratory, HSE, 6 Usacheva str., Moscow, Russia, 119048}
\email{marat@mccme.ru}
\date{}

\begin{abstract} L\"uroth's theorem describes the dominant maps from rational curves over a field. 

In this note we study those dominant rational maps from cartesian powers $X^{\Psi}$ of geometrically 
irreducible varieties $X$ over a field $k$ for infinite sets $\Psi$ that are equivariant with 
respect to all permutations of the factors $X$. At least some of such maps arise as compositions 
$h:X^{\Psi}\xrightarrow{f^{\Psi}}Y^{\Psi}\to H\backslash Y^{\Psi}$, where $X\xrightarrow{f}Y$ is 
a dominant $k$-map and $H$ is a group of birational automorphisms of $Y|k$, acting diagonally on $Y^{\Psi}$. 

In characteristic 0, we show that this construction, when properly modified, gives all dominant 
equivariant maps from $X^{\Psi}$, if $\dim X=1$. For arbitrary $X$, the results are only partial. 

Also, a somewhat similar problem of describing the equivariant integral schemes over $X^{\Psi}$ 
of finite type is touched very briefly. 

{\bf Keywords:} infinite symmetric group, infinite cartesian powers of algebraic varieties, 
equivariant maps of algebraic schemes. \end{abstract}

\maketitle 

\section{Introduction} 

L\"uroth's theorem is an explicit description \begin{itemize} \item of the intermediate fields between 
a field $k$ and the field $k(x)$ of rational functions over $k$ in one variable, or equivalently, 
\item of the dominant $k$-maps from rational curves over a field $k$. \end{itemize} 

Given a field extension $\Phi|k$ and a group $G$ of its automorphisms, one may similarly ask, 
whether there is a `simple' description of the intermediate $G$-invariant fields in $\Phi|k$. 
That would be an analogue of L\"uroth's theorem. 

The Galois correspondence gives such a description when $G$ is precompact and $k=\Phi^G$ 
is the fixed field of $G$. 

\vspace{2mm}

Consider the case of the symmetric group $G=\Sy_{\Psi}$ of all permutations of 
an infinite set $\Psi$ acting on fields $\Phi=F_{\Psi}$ of the following type. 
Let $F|k$ be a regular field extension of characteristic $p\ge 0$, and $F_{\Psi}=F_{k,\Psi}$ 
be the fraction field of the tensor product over $k$ of copies of $F$ labeled by the set $\Psi$. 

If $F$ is the function field of a geometrically irreducible scheme $X$ over $k$ then the intermediate 
$\Sy_{\Psi}$-invariant fields in $F_{\Psi}|k$ correspond to dominant rational maps from the cartesian 
power $X^{\Psi}$ over $k$ that are equivariant with respect to all permutations of the factors $X$. 

For each intermediate field $L$ in $F|k$ and a pro-algebraic $k$-group $H$ of field automorphisms of $L$ 
identical on $k$,\footnote{i.e. a functor $\{k\mbox{-algebras}\}\to\{\mbox{groups}\}$ of a specific type. 
In more details a structure of pro-algebraic group on field automorphism groups is discussed in 
\cite[\S1]{Demazure} (see also \cite{Hanamura,Montgomery}) and Appendix~\ref{group-actions-on-fields}.} 
the {\sl fixed subfield} $(L_{\Psi})^H$ in $L_{\Psi}$ of the diagonal $H$-action\footnote{defined 
in Appendix~\ref{group-actions-on-fields}, but for reduced $H$: $(L_{\Psi})^H=
((L_{\Psi}\otimes_kk^{\mathrm{sep}})^{H(k^{\mathrm{sep}})})^{\mathrm{Gal}(k^{\mathrm{sep}}|k)}$ 
for a separable closure $k^{\mathrm{sep}}$ of $k$.} is evidently invariant under the natural action of 
$\Sy_{\Psi}$ on $F_{\Psi}$, while $\mathrm{tr.deg}(L_{\Psi}|(L_{\Psi})^H)$ coincides with dimension of $H$. 
Here $\mathrm{tr.deg}$ denotes the transcendence degree. 

Let $K$ be an $\Sy_{\Psi}$-invariant subfield of $F_{\Psi}|k$. Set $d:=\mathrm{tr.deg}(F_{\Psi}|K)$. 
There are evidences that $K$ comes from the above construction: 
\begin{itemize} \item (Theorem~\ref{subfields-of-F-Psi-char-0}) if $p=0$ then $K$ is 
`of finite codimension' in $L_{\Psi}$ for a field extension $L|k$ in $F$, in the sense that 
$K\subseteq L_{\Psi}$ and for any $L'|k$ in $L$ of finite transcendence degree there is 
a field extension $L''|L'$ in $L$ with $\mathrm{tr.deg}(L''_{\Psi}|K\cap L''_{\Psi})<\infty$; 
\item if $\mathrm{tr.deg}(F|k)=1$ and $K\neq k$ then $d$ 
is finite (moreover, $d\le 3$ if $p=0$, see Theorem~\ref{invar-subf}); 
\item (Theorem~\ref{invar-subf}, Propositions~\ref{sub-algebraic}, \ref{alg-sub-KPsi}) 
$K$ is obtained by this construction if $F|k$ is of transcendence degree 1 and $p=0$; 
\item (Proposition~\ref{sub-algebraic}) if $F_{\Psi}$ is algebraic over $K$ then 
$F'_{\Psi}\subseteq K$ for an intermediate subfield $F'$ in $F|k$ over which $F$ is algebraic; 
if, moreover, $p=0$ then $K$ is precisely of type $(L_{\Psi})^H$. \end{itemize} 
In the case of $\mathrm{tr.deg}(F|k)=1$ and $p=0$, the $\Sy_{\Psi}$-invariant subfields of $F_{\Psi}$ 
admit a concrete description related to (certain systems of isogenies of) one-dimensional algebraic $k$-groups. 

Our principal tools are (a) the K\"ahler differentials considered as objects of the category 
$\Sm_{F_{\Psi}}(\Sy_{\Psi})$ of {\sl smooth} semilinear representations of $\Sy_{\Psi}$ over $F_{\Psi}$, 
and (b) a description from \cite{H90} (slightly generalized in Appendix \ref{Addendum-to-H90}) 
of the indecomposable injectives of the category $\Sm_{F_{\Psi}}(\Sy_{\Psi})$. 

In somewhat opposite direction, for a geometrically irreducible $k$-scheme $X$, 
one may look for extensions of the $\Sy_{\Psi}$-action on $X^{\Psi}$ to 
$\Sy_{\Psi}$-actions on generic points of schemes of finite type over 
$X^{\Psi}$. Such schemes are obtained by composing a) inclusions into $X^{\Psi}$ of $\Sy_{\Psi}$-invariant 
irreducible $k$-subschemes of $X^{\Psi}$, and b) dominant rational $\Sy_{\Psi}$-equivariant $k$-maps of 
finite type to invariant $k$-subschemes of $X^{\Psi}$. 

It is shown in \S\ref{Some-fin-gen-field-exts} that, for any $k$-variety $Y$, any smooth 
$\Sy_{\Psi}$-action on the function field $F_{\Psi}(Y)$ extending the natural $\Sy_{\Psi}$-action 
on $F_{\Psi}$ comes from an isomorphism $F_{\Psi}(Y)\cong F_{\Psi}(Y')$ for some $k$-variety $Y'$ 
and the trivial $\Sy_{\Psi}$-action on $k(Y')$. It is also shown that for certain classes of 
$F_{\Psi}$-varieties $\mathbb W$ the existence of a smooth $\Sy_{\Psi}$-action on $F_{\Psi}(\mathbb W)$ 
implies that $\mathbb W$ is birational to $W\times_kF_{\Psi}$ for some $k$-variety $W$. 

For each dominant $k$-map $X\xrightarrow{f}Y$ with geometrically irreducible general fibre, 
an invariant $k$-subscheme of $X^{\Psi}$ can be obtained as the pullback under $f^{\Psi}$ of 
a subscheme of $Y$ embedded diagonally into $Y^{\Psi}$. It follows from \cite[Proposition 2.7]{NS} 
that all integral invariant $k$-subschemes of $X^{\Psi}$ arise this way if $\dim X=1$. 
Another proof of this corollary is contained in \S\ref{subvar-of-curve-powers}. 

Though the principal results here deal with the symmetric group $\Sy_{\Psi}$, some assertions are stated for 
more general `permutation' groups.\footnote{A {\sl permutation group} is a Hausdorff group $G$ admitting 
a base $B$ of neighbourhoods of 1 consisting of subgroups. In fact, $G$ is the {\sl permutation group of 
the set} $\coprod_{U\in B}G/U$.} A group that is the most interesting from the point of view of geometric 
applications is mentioned in Example \ref{ex-triv-fdim}. In more detail, this is discussed in \cite{rep,H90}.

\subsection{Notation} \label{NotaF} 
For a unital associative ring $A$ and a set $S$, both endowed with an action of 
a group $G$, $A\langle S\rangle=\{\sum_ia_i[s_i]~|~a_i\in A,\ s_i\in S\}$ denotes the free left 
$A$-module with basis $S$ (and the evidently defined left $A$-multiplication and addition) with the 
(diagonal) left $G$-action both on $A$ and $S$: $g(a[s])=a^g[gs]$ for all $g\in G$, $a\in A$, $s\in S$, 
where we write $a^g$ for the result of applying of $g$ to $a$. Then (for $S=G$) $A\langle G\rangle$ 
becomes a unital associative ring, while $A\langle S\rangle$ becomes a left $A\langle G\rangle$-module. 

For each pair of sets $\Upsilon\subseteq S$, denote (i) by $\Sy_S$ the group of 
all permutations of the set $S$, (ii) by $\Sy_{S|\Upsilon}$ the pointwise stabilizer of $\Upsilon$ in the 
group $\Sy_S$. Denote by $S^G:=\{s\in S\ |\ s^g=s\mbox{ for all }g\in G\}$ the subset of $S$ fixed by $G$. 

The notation $F|k$ is reserved for regular field extensions, i.e. such that the ring $F\otimes_kK$ 
is integral for any field extension $K|k$. 

Denote by $F_S=F_{k,S}$ the fraction field of the coproduct $\bigotimes_{k,~i\in S}F$ in 
the category of commutative $k$-algebras of the collection of copies of $F$ indexed by the set $S$. 
The symmetric group $\Sy_S$ of $S$ acts naturally on $F_S$. In particular, if $T$ is a variable then 
$k(S):=k(T)_S$ is the field of rational functions over $k$ in the variables labeled by the set $S$. 

For each field extension $L|K$, we denote by $\Omega_{L|K}$, $\Aut_{\mathrm{field}}(L|K)$ and 
$\mathrm{tr.deg}(L|K)$, respectively, the $L$-vector space of K\"ahler differentials on $L$ 
over $K$, the group of $K$-linear field automorphisms of $L$, and the transcendence degree.

\section{Relative dimension over invariant subfields of \texorpdfstring{$F_{\Psi}$}{}} 
Let $\Psi$ be a set, $G$ be a permutation group of $\Psi$. A $G$-action on a set $S$ is called {\sl smooth} 
(and the set $S$ endowed with such an action is called a {\sl smooth $G$-set}) if the stabilizer in $G$ of 
each element of the set $S$ is {\sl open}, i.e. contains the pointwise stabilizer of a finite subset of $\Psi$. 

For a ring $A$ and a permutation group $G$ acting on $A$, denote by $\Sm_A(G)$ 
the category of smooth left $A\langle G\rangle$-modules. This is a {\sl Grothendieck} category. 

Recall that an object $U$ in a category is called a {\sl cogenerator} if any pair 
of distinct morphisms $g_1,g_2\colon X\rightrightarrows Y$ admits a morphism 
$\theta\colon Y\to U$ such that $\theta\circ g_1\neq\theta\circ g_2$. 
For a category with direct products, an object $U$ is a cogenerator if and 
only if any object is {\it a subobject of a direct product} of copies of $U$. 

\begin{lemma} \label{level-1_quotients} Let $\Psi$ be an infinite set, $K$ be a smooth 
$\Sy_{\Psi}$-field, $P:=K\langle\Psi\rangle$, and $V$ be a $K\langle\Sy_{\Psi}\rangle$-submodule of a 
direct sum $M$ of copies of $P$ and of $K$. Suppose that $K$ is a cogenerator of $\Sm_K(\Sy_{\Psi})$, 
and $P$ is injective. Then $M/V$ is also a direct sum of copies of $P$ and of $K$. \end{lemma}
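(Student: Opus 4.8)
The plan is to read off the structure of $M$ from the two hypotheses and then analyze $N:=M/V$ through its socle, isolating a ``pure level $1$'' quotient as the essential case. Since $K$ is a cogenerator, every object embeds into a product of copies of $K$, and a one-line argument (project a simple subobject to a nonzero coordinate and use that the only $K\langle\Sy_{\Psi}\rangle$-submodules of $K$ are $0$ and $K$) shows that $K$ is the \emph{unique} simple object up to isomorphism. A direct computation gives $\Hom(K,P)=0$: an equivariant $K$-linear map $K\to P$ is multiplication by a $\Sy_{\Psi}$-invariant element of $P=K\langle\Psi\rangle$, and such an element would have $\Sy_{\Psi}$-invariant finite support, hence vanish. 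So $P$ has zero socle, while the augmentation $\epsilon\colon P\to K$, $[s]\mapsto 1$, exhibits $K$ as a quotient of $P$ and $\Hom(P,K)$ is the large space $K^{\Sy_{\Psi|\{s_0\}}}$. Consequently $\mathrm{soc}(M)$ equals the $K$-summand $K^{(J)}$ and $M/\mathrm{soc}(M)\cong P^{(I)}$, so the $P$- and $K$-parts of $M$ are canonical.

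Next I would set $V_0:=V\cap P^{(I)}$ and form the exact sequence
\[
0\longrightarrow P^{(I)}/V_0\longrightarrow N\longrightarrow K^{(J)}/\pi(V)\longrightarrow 0,
\]
where $\pi\colon M\to K^{(J)}$ is the projection. Since subobjects and quotients of the semisimple object $K^{(J)}$ are again sums of the unique simple $K$, the right-hand term is $\cong K^{(J_2)}$. This reduces the lemma to two statements: (i) the \textbf{core claim} that $P^{(I)}/V_0$ is a direct sum of copies of $P$ and $K$ for every submodule $V_0\subseteq P^{(I)}$; and (ii) that the resulting extension of $K^{(J_2)}$ splits as a sum of $P$'s and $K$'s.

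For the core claim I would extract $P$-summands from the quotient $Q:=P^{(I)}/V_0$ by transfinite induction: using that $P$ is injective, lift suitable maps to realize a maximal family of $P$-summands inside $Q$, and then identify the remaining quotient, which contains no further copy of $P$, as a semisimple object, necessarily $\cong K^{(J'')}$ by the cogenerator property, the needed $K$-quotients being supplied by $\epsilon$ and its $\Sy_{\Psi}$-translates. \textbf{This is where I expect the main obstacle.} A submodule $V_0$ of the \emph{infinite} direct sum $P^{(I)}$ need not respect the decomposition, and the augmentation kernel $P^0$ is already a non-simple object of zero socle, so the submodule lattice of even a single $P$ is genuinely infinite. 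The heart of the matter is to prove that no indecomposable other than $P$ and $K$ can occur in $Q$; this is precisely the point at which the injectivity of $P$ and the cogenerator hypothesis must be combined (injectivity to split off the $P$'s, the cogenerator to force the residue to be a sum of $K$'s), together with careful transfinite bookkeeping over $I$.

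Finally, for the splitting (ii): the $P$-part splits off because $P$ injective gives $\mathrm{Ext}^1(-,P)=0$, the passage to the infinite sum $P^{(I')}$ being handled coordinatewise since any individual extension class has finite support; the residual extension $0\to K^{(J'')}\to N'\to K^{(J_2)}\to 0$ is an extension of semisimples, and its splitting reduces to $\mathrm{Ext}^1(K,K)=0$, i.e. to the vanishing of the relevant smooth $H^1(\Sy_{\Psi},K)$, which I would derive from the hypotheses (the unique simple $K$ being relatively injective on the semisimple part). Assembling the summands from (i) and (ii) then exhibits $N=M/V$ as a direct sum of copies of $P$ and of $K$.
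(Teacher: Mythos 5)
Your reduction of $M/V$ to the structure of $P^{(I)}/V_0$ is sound as far as it goes, but the step you yourself flag as ``the main obstacle'' is the entire content of the lemma, and your proposal does not supply the idea that resolves it. The missing ingredient is an explicit computation inside $P=K\langle\Psi\rangle$: if $\alpha=\sum_{i=0}^{e}a_i[u_i]$ is a nonzero element with $a_i\in K^{\times}$ and the $u_i$ pairwise distinct, then the $\Sy_{\Psi|\{u_1,\dots,u_e\}}$-translates of $a_0^{-1}\alpha$ span, modulo the $K$-span of $[u_1],\dots,[u_e]$, all of $P$; hence \emph{every} nonzero submodule of $P$ has $K$-codimension at most $e<\infty$. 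Consequently every quotient of $P$ by a nonzero submodule is finite-dimensional over $K$ and, by the cogenerator hypothesis, a finite direct sum of copies of $K$. This disposes of your worry about the augmentation kernel: $P$ has infinitely many submodules, but all nonzero ones are cofinite, so no indecomposable other than $P$ and $K$ can occur as a quotient of $P$. Without this (or an equivalent) statement, your ``core claim'' is unproven, and the transfinite extraction of $P$-summands has nothing to certify that the residue is semisimple.

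The paper also organizes the splitting differently, in a way that avoids your coordinatewise $\mathrm{Ext}$ bookkeeping (which, as written, is itself gappy: injectivity of the infinite direct sums $M$ and $P^{(I')}$ is not a matter of ``finite support of extension classes'' but of local noetherianity of $\Sm_K(\Sy_{\Psi})$, quoted from \cite[Theorem 3.18]{H90}). Since $M$ is injective, a maximal essential extension $\widetilde{V}$ of $V$ inside $M$ is injective, hence splits off: $M=M'\oplus\widetilde{V}$, and by Krull--Remak--Schmidt--Azumaya both $M'$ and $\widetilde{V}$ are direct sums of copies of $P$ and of $K$. Essentiality of $V$ in $\widetilde{V}$ forces the projection $\widetilde{V}\to\widetilde{V}/V$ to be non-injective on each summand, so $\widetilde{V}/V$ is a sum of quotients of $P$ and of $K$ by nonzero submodules, i.e.\ a sum of copies of $K$ by the computation above; then $M/V\cong M'\oplus\widetilde{V}/V$. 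You should either adopt this route or first prove the cofiniteness statement and rebuild your transfinite argument on top of it.
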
 
\begin{proof} Let $\alpha=\sum\limits_{i=0}^ea_i[u_i]\in P$ be a non-zero element for some 
$e\ge 1$, $a_i\in K^{\times}$ and pairwise distinct $u_i\in\Psi$. Then any element of $P$, presented as 
$\sum\limits_{j=1}^Nb_j[v_j]+\sum\limits_{i=1}^ec_i[u_i]$, where $v_j\notin\{u_1,\dots,u_e\}$ and 
$b_j,c_i\in K$, coincides with $\sum\limits_{j=1}^Nb_j(a_0^{-1}\alpha)^{g_j}+\sum\limits_{i=1}^ed_i[u_i]$ 
for arbitrary $g_j\in\Sy_{\Psi|\{u_1,\dots,u_e\}}$ with $u_0^{g_j}=v_j$, and some $d_i\in K$. This means 
that $\alpha$ generates a $K\langle\Sy_{\Psi}\rangle$-submodule in $P$ of $K$-codimension $\le e$. As 
$K$ is a cogenerator of $\Sm_K(\Sy_{\Psi})$, this shows that any quotient of $P$ by a non-zero submodule 
is a finite direct sum of copies of $K$. 

As $M$ is injective,\footnote{Clearly, a simple cogenerator is injective. By \cite[Theorem 3.18]{H90}, 
the category $\Sm_K(\Sy_{\Psi})$ is locally noetherian, so any direct sum of injectives is injective.} 
any maximal essential extension $\widetilde{V}$ of $V$ in $M$ is injective 
as well,\footnote{In fact, $\widetilde{V}$ is the common kernel of the annihilator of $V$ in 
$\mathrm{End}_{K\langle\Sy_{\Psi}\rangle}(M)$.} and therefore, $\widetilde{V}$ splits off $M$ as 
a direct summand: $M=M'\oplus\widetilde{V}$. By Krull--Remak--Schmidt--Azumaya theorem (applicable, 
since $\mathrm{End}_{K\langle\Sy_{\Psi}\rangle}(K)=K^{\Sy_{\Psi}}$ and 
$\mathrm{End}_{K\langle\Sy_{\Psi}\rangle}(P)\cong K^{\Sy_{\Psi|\{x\}}}$ are fields for any 
$x\in\Psi$), both, $\widetilde{V}$ and $M'$, are direct sums of copies of $P$ and of $K$. 

As $V$ is essential in $\widetilde{V}$, the restriction of the projection $\widetilde{V}\to\widetilde{V}/V$ 
to each of the direct summands $P$ and $K$ is not injective, which means that $\widetilde{V}/V$ is a sum 
of quotients of $P$ by non-zero submodules, i.e. $\widetilde{V}/V$ is a sum of copies of $K$. \end{proof}

\begin{conjecture} \label{level-l-fields} Let $\Psi$ be an infinite set, $F|k$ be a non-trivial 
regular field extension. Then, for any $\Sy_{\Psi}$-invariant field extension $K|k$ in $F_{\Psi}$, 
there is a unique field extension $L|k$ in $F$ such that {\rm (i)} $K\subseteq L_{\Psi}$, {\rm (ii)} $L$ 
is algebraically closed in $F$, {\rm (iii)} for any $L'|k$ in $L$ with $\mathrm{tr.deg}(L'|k)<\infty$ 
there is a field extension $L''|L'$ in $L$ with $\mathrm{tr.deg}(L''_{\Psi}|K\cap L''_{\Psi})<\infty$. 
\end{conjecture}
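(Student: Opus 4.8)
The plan is to reconstruct $L$ from the module of K\"ahler differentials $\Omega_{F_{\Psi}|K}$, regarded as an object of $\Sm_{F_{\Psi}}(\Sy_{\Psi})$, reading off both $L$ and the finite-codimension defect from its decomposition as a semilinear representation. First I would reduce to $n:=\mathrm{tr.deg}(F|k)<\infty$: condition (iii) is visibly a statement about the finitely generated subextensions $L'|k$ of $L$, so the general case should follow by writing $F$ as the union of its finitely generated subextensions $F_0|k$, solving the problem for each $K\cap(F_0)_{\Psi}$ inside $(F_0)_{\Psi}$, and passing to the limit, using the uniqueness established in the finite case to glue the fields $L_0$ into a compatible system. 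Assuming $F|k$ separably generated of transcendence degree $n$, I would identify $\Omega_{F_{\Psi}|k}$ with $P^{\oplus n}$, where $P=F_{\Psi}\langle\Psi\rangle$: a separating transcendence basis $x^{(1)},\dots,x^{(n)}$ of $F|k$ gives, for each coordinate $i\in\Psi$, the differentials $dx^{(j)}_i$, and the assignment $[i]\mapsto dx^{(j)}_i$ identifies the $j$-th summand with $P$ as a semilinear $\Sy_{\Psi}$-module.

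Next, the invariant subfield $K$ determines the subrepresentation $V:=\mathrm{image}(\Omega_{K|k}\otimes_KF_{\Psi}\to\Omega_{F_{\Psi}|k})$, and $\Omega_{F_{\Psi}|K}=\Omega_{F_{\Psi}|k}/V$. Granting that $F_{\Psi}$ is a cogenerator of $\Sm_{F_{\Psi}}(\Sy_{\Psi})$ and that $P$ is injective, both of which should follow from the description of indecomposable injectives in \cite{H90}, Lemma~\ref{level-1_quotients} applies with $F_{\Psi}$ in the role of the base field and yields a decomposition $\Omega_{F_{\Psi}|K}\cong P^{\oplus r}\oplus F_{\Psi}^{\oplus s}$. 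I would then argue that $r=\mathrm{tr.deg}(F|L)$ for the sought field $L$: the copies of $P$ are the surviving ``per-coordinate'' differential directions, so, after transporting them back into $\Omega_{F_{\Psi}|k}$ and restricting to a single coordinate, their images span an $r$-dimensional subspace $W\subseteq\Omega_{F|k}$, and I would take $L$ to be the unique subfield algebraically closed in $F$ with $\Omega_{F|L}$ identified with $W$. In characteristic $0$ this subspace-to-subfield passage is exactly the correspondence between algebraically closed intermediate fields and saturated subspaces of $\Omega_{F|k}$, which pins down $L$ and delivers condition (ii).

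It remains to verify (i), (iii) and uniqueness. For (i) I would show that, with the $P$-summands transported as above, every element of $K$ has differential spanned by the $L$-directions, and then invoke a field-theoretic reconstruction: a function in $F_{\Psi}$ all of whose coordinate differentials come from $L$ already lies in $L_{\Psi}$. The finitely many summands $F_{\Psi}^{\oplus s}$ measure the defect, producing over each finitely generated $L'|k$ in $L$ a finite-dimensional discrepancy between $K\cap L''_{\Psi}$ and $L''_{\Psi}$, which is (iii). Uniqueness follows by squeezing: (i) bounds $L$ from below while (iii) fixes $\mathrm{tr.deg}(L|k)=n-r$, so two solutions $L_1,L_2$ satisfy $K\subseteq(L_1\cap L_2)_{\Psi}$, with $L_1\cap L_2$ again algebraically closed in $F$ and of the same transcendence degree, forcing $L_1=L_2$ (here one also checks $(L_1)_{\Psi}\cap(L_2)_{\Psi}=(L_1\cap L_2)_{\Psi}$).

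The hard part will be positive characteristic. The whole argument rests on the identification $\Omega_{F_{\Psi}|k}\cong P^{\oplus n}$ and on the faithful correspondence between intermediate fields and subspaces of differentials, both of which fail when $p>0$: inseparable directions contribute spurious differentials, $\Omega_{F|L}$ need not vanish for finite $L\subseteq F$, and the passage from $W$ back to a field $L$ is no longer canonical. A substitute would have to replace K\"ahler differentials by an invariant insensitive to inseparability, for instance working throughout with separable closures or with relative differentials of a smooth model together with descent along Frobenius, and it is precisely this gap that keeps the statement a conjecture rather than a theorem when $p>0$.
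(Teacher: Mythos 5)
Your proposal reproduces, in outline, the paper's own treatment of this statement: the paper proves it only in characteristic $0$ (Theorem~\ref{subfields-of-F-Psi-char-0}) and leaves positive characteristic open, exactly as you do, and the char-$0$ argument is the same one you sketch --- identify $\Omega_{F_{\Psi}|k}$ with a direct sum of copies of $P=F_{\Psi}\langle\Psi\rangle$, apply Lemma~\ref{level-1_quotients} using the cogenerator/injectivity statements of Theorem~\ref{H90Theorem1.2} to decompose $\Omega_{F_{\Psi}|K}$ into copies of $P$ and $F_{\Psi}$, cut out $L$ by the derivations of $F$ that kill $K$, and read off (iii) from noetherianity of the finitely generated pieces.

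Two points of divergence are worth flagging. First, the step you describe as ``invoke a field-theoretic reconstruction: a function in $F_{\Psi}$ all of whose coordinate differentials come from $L$ already lies in $L_{\Psi}$'' is not a routine invocation; it is the real content of the argument, isolated in the paper as Lemma~\ref{horiz-functns} (the field of $D$-constants in the fraction field of $F\otimes_kK$ is the fraction field of $L\otimes_kK$), and its proof there requires specialization to closed points plus Zariski's main theorem. It is also precisely where characteristic $0$ enters (via $\widetilde{F}^{D=0}=L\otimes_kK$ for finite $K|k$), so asserting it without proof hides both the main technical work and the source of the characteristic restriction you correctly diagnose at the end. Second, your preliminary reduction to $\mathrm{tr.deg}(F|k)<\infty$ is unnecessary and introduces a gluing/limit step you do not carry out (compatibility of the $L_0$ across the directed system of finitely generated $F_0$ is not obviously free, since $K\cap(F_0)_{\Psi}$ need not determine $K$); the paper avoids this entirely because Lemma~\ref{level-1_quotients} applies to arbitrary direct sums of copies of $P$ and $K$, so one can work with $F_{\Psi}\langle\Psi\rangle\otimes_F\Omega_{F|k}$ directly even when $\Omega_{F|k}$ is infinite-dimensional. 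Your uniqueness argument also leans on the unproved identity $(L_1)_{\Psi}\cap(L_2)_{\Psi}=(L_1\cap L_2)_{\Psi}$ and on (iii) pinning down a transcendence degree, which needs more care when $\mathrm{tr.deg}(L|k)$ is infinite; but these are secondary to the two gaps above.
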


The following lemma is definitely well-known. 
\begin{lemma} \label{horiz-functns} Let $F|k$ be a characteristic $0$ regular field extension, 
$K|k$ be a field extension, $\widetilde{F}$ be the fraction field of $F\otimes_kK$, and 
$D\in\mathrm{Der}_k(F)\subseteq\mathrm{Der}_K(\widetilde{F})$. Then 
$\widetilde{F}^{D=0}:=\{f\in\widetilde{F}~|~Df=0\}$ is the fraction field of $L\otimes_kK$, 
where $L=\{f\in F~|~Df=0\}$. \end{lemma}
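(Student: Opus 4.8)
The plan is to prove the two inclusions separately, the trivial one by hand and the essential one by a base-change reduction followed by a differential-simplicity argument. The inclusion $\mathrm{Frac}(L\otimes_kK)\subseteq\widetilde{F}^{D=0}$ is immediate: the extended $D$ is $K$-linear and kills $L$, hence annihilates $L\otimes_kK$ and therefore its fraction field, while $\widetilde{F}^{D=0}$ is a subfield of $\widetilde{F}$. For the reverse inclusion I would first record two consequences of characteristic $0$. Since $F|k$ is regular and $L\subseteq F$, the extension $L|k$ is regular too, so $L\otimes_kK$ is a domain and $\Lambda:=\mathrm{Frac}(L\otimes_kK)$ makes sense. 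More importantly, $L=F^{D=0}$ is algebraically closed in $F$: if $\alpha\in F$ is algebraic over $L$ with minimal polynomial $m$ over $L$, then applying $D$ to $m(\alpha)=0$ gives $m'(\alpha)\,D\alpha=0$ with $m'(\alpha)\neq0$ (separability in characteristic $0$), whence $D\alpha=0$ and $\alpha\in L$. As $F|L$ is also separable, it is regular, so $A:=F\otimes_L\Lambda$ is an integral domain. Using $F\otimes_kK=F\otimes_L(L\otimes_kK)$ and localizing, $\mathrm{Frac}(A)=\widetilde{F}$, and the extension of $D$ acts on $A$ as $D\otimes\mathrm{id}_\Lambda$, killing $\Lambda$. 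The claim thus reduces to $\widetilde{F}^{D=0}=\Lambda$.

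For this reduced statement, fix an $L$-basis $\{e_\alpha\}$ of $\Lambda$; then $\{1\otimes e_\alpha\}$ is an $F$-basis of $A$, and because $D(1\otimes e_\alpha)=0$ the derivation acts coefficientwise through $D|_F$. Two facts follow. First, $A^{D=0}=\Lambda$: writing $g=\sum_\alpha g_\alpha(1\otimes e_\alpha)$ one has $Dg=\sum_\alpha(Dg_\alpha)(1\otimes e_\alpha)$, which vanishes iff each $g_\alpha\in F^{D=0}=L$, i.e. iff $g\in\Lambda$. Second, $A$ is differentially simple: given a nonzero $D$-stable ideal $\mathfrak a$, choose $0\neq g\in\mathfrak a$ of minimal support $S$ in this basis; scaling by the inverse in $F\subseteq A$ of one of its coefficients we may assume some coefficient equals $1$, so $Dg\in\mathfrak a$ has support strictly inside $S$ and hence $Dg=0$ by minimality, giving $g\in\Lambda^\times\subseteq A^\times$ and $\mathfrak a=A$. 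Now for any $f\in\widetilde{F}^{D=0}$, its denominator ideal $\mathfrak a=\{a\in A:af\in A\}$ is nonzero and $D$-stable, since $D(af)=(Da)f$; by differential simplicity $\mathfrak a=A$, so $f\in A$, and then $f\in A^{D=0}=\Lambda$. This closes the reverse inclusion.

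The main obstacle is exactly the passage from the constants of the ring $A$ to those of its fraction field $\widetilde{F}$: a priori the field of constants could acquire new transcendental or algebraic elements, and it is the regularity of $F|L$ — secured by the characteristic-$0$ minimal-polynomial argument — that makes $A$ a domain and thereby activates the denominator-ideal mechanism. The technical device carrying the argument is the coefficientwise action of $D$ in an $L$-basis of $\Lambda$, which simultaneously computes $A^{D=0}=\Lambda$ and yields the differential simplicity of $A$; both the choice of basis adapted to $L\subseteq\Lambda$ and the identity $F\otimes_kK=F\otimes_L(L\otimes_kK)$ are what reduce the lemma to this clean setting.
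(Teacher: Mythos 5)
Your proof is correct, and it takes a genuinely different route from the paper's. The paper argues by specialization: writing $\xi=\alpha/\beta$ with $\alpha,\beta\in F\otimes_kB$ for a finitely generated $k$-subalgebra $B\subseteq K$, it reduces to the case of finite extensions $C=B[\beta^{-1}]/\mathfrak{m}$ of $k$ (where $\widetilde{F}=F\otimes_kC$ and the claim is immediate), concludes that the morphism $\mathrm{Spec}((L\otimes_kB)[\xi])\to\mathrm{Spec}(L\otimes_kB)$ has single-point fibres over a dense set of closed points, and invokes Zariski's main theorem in the form of EGA III (4.4.3) to see that this morphism is birational, i.e.\ $\xi\in\mathrm{Frac}(L\otimes_kB)$. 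You instead note that in characteristic $0$ the field $L=F^{D=0}$ is relatively algebraically closed in $F$ (the minimal-polynomial argument), so $F|L$ is regular, $A=F\otimes_L\Lambda$ is a domain with fraction field $\widetilde{F}$, and the coefficientwise action of $D$ in an $L$-basis of $\Lambda$ gives both $A^{D=0}=\Lambda$ and the differential simplicity of $A$; the $D$-stable denominator ideal of a constant $f\in\widetilde{F}^{D=0}$ then forces $f\in A$, hence $f\in\Lambda$. All the steps check out: the base-change identity $F\otimes_kK=F\otimes_L(L\otimes_kK)$, the compatibility of $D\otimes\mathrm{id}_\Lambda$ with the given extension of $D$ to $\widetilde{F}$, and the minimal-support argument are each sound. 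Your approach is more elementary and self-contained (no appeal to Zariski's main theorem) and isolates exactly where characteristic $0$ enters, namely in the separability that makes $L$ algebraically closed in $F$ so that $A$ is a domain; the paper's specialization argument is more geometric and keeps all data finitely generated, which fits the way the lemma is applied later.
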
 
\begin{proof} Let $\xi=\alpha/\beta\in\widetilde{F}^{D=0}$, where $\alpha=\sum_ia_i\otimes b_i$ 
and $\beta=\sum_jc_j\otimes d_j\neq 0$ for some $a_i,c_j\in F$, $b_i,d_j\in K$. Let $B$ be 
the $k$-subalgebra of $K$, generated by $b_i,d_j\in K$. An arbitrary $k$-algebra homomorphism 
$B\xrightarrow{\varphi}C$ induces an $F$-algebra homomorphism 
$F\otimes_kB\xrightarrow{\varphi_F}F\otimes_kC$ and, if $\varphi_F(\beta)\neq 0$, --- an $F$-algebra 
homomorphism $(F\otimes_kB)[\beta^{-1}]\xrightarrow{\varphi_F}(F\otimes_kC)[\varphi_F(\beta)^{-1}]$. 

On the other hand, the $C$-linear extension of $D$ induces a derivation 
$D_C\in\mathrm{Der}_C((F\otimes_kC)[\varphi_F(\beta)^{-1}])$. Clearly, 
$D_C(\varphi_F(\xi))=0\in(F\otimes_kC)[\varphi_F(\beta)^{-1}]$.\footnote{since the following 
diagram commutes \[\begin{diagram} \node{F\otimes_kB}\arrow{e,t}{id_F\otimes\varphi} 
\arrow{s,l}{D_B:=D\otimes id_B} \node{F\otimes_kC} \arrow{s,r}{D_C:=D\otimes id_C}\\ 
\node{F\otimes_kB}\arrow{e,t}{id_F\otimes\varphi} \node{F\otimes_kC.}\end{diagram}\]} 
If $K|k$ is finite then $\widetilde{F}=F\otimes_kK$, hence $\widetilde{F}^{D=0}=L\otimes_kK$. 
Therefore, $\varphi(\xi)\in L\otimes_kC$ if, for an arbitrary maximal ideal $\mathfrak{m}$ of $B[\beta^{-1}]$, 
$\varphi$ is the natural homomorphism to the finite extension $C=C_{\mathfrak{m}}:=B[\beta^{-1}]/\mathfrak{m}$ 
of $k$. This means that the fibres over the closed points 
$\mathrm{Spec}(L\otimes_kC_{\mathfrak{m}})\xrightarrow{\varphi^*}Y$ of the morphism 
$\mathrm{Spec}((L\otimes_kB)[\xi])\xrightarrow{\eta}Y:=\mathrm{Spec}(L\otimes_kB)$ 
are single points. The set of such points is dense, so by Zariski's main theorem in the form of 
\cite[Th\'eor\`eme (4.4.3)]{EGAIII},\footnote{Let $Y$ be a noetherian scheme, $f:X\to Y$ be 
a quasi-projective morphism, $X'$ be set of points $x\in X$ that are isolated in their fibre $f^{-1}(f(x))$. 
Then $X'$ is open in $X$, and the induced scheme of $X'$ is isomorphic to an open part of a scheme that is 
finite over $Y$.} $\eta$ is birational. In other words, $\xi$ belongs to the fraction fields of 
$L\otimes_kB\subset L\otimes_kK$. \end{proof}

\label{tensor-factors} 
For each $u\in\Psi$, let $(u)\colon F\hookrightarrow F_{\Psi}$ be the field embedding 
identifying $F$ with the $u$-th tensor factor in $\bigotimes_{k,\Psi}F$. For each 
$f\in F$, let $f(u)$ be the image of $f$ under $(u)$. Then $(u)$ induces an inclusion 
$(u)\colon\Omega_{F|k}\hookrightarrow\Omega_{F_{\Psi}|k}$. Set $F_u:=\{f(u)~|~f\in F\}$. 

Let us check Conjecture~\ref{level-l-fields} in characteristic $0$. 
\begin{theorem} \label{subfields-of-F-Psi-char-0} Let $F|k$ be a regular field extension 
of characteristic $0$, and $K|k$ be an $\Sy_{\Psi}$-invariant field extension in 
$F_{\Psi}$. Then there is a unique field extension $L|k$ in $F$ such that {\rm (i)} 
$K\subseteq L_{\Psi}$, {\rm (ii)} $L$ is algebraically closed in $F$, {\rm (iii)} for any 
$L'|k$ in $L$ with $\mathrm{tr.deg}(L'|k)<\infty$ there is a field extension $L''|L'$ 
in $L$ with $\mathrm{tr.deg}(L''_{\Psi}|K\cap L''_{\Psi})<\infty$. \end{theorem}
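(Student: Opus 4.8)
The plan is to produce the field $L$ by an intrinsic recipe in terms of derivations, verify (i) and (ii) using Lemma~\ref{horiz-functns}, extract the finiteness statement (iii) from the structure of the module of relative differentials via Lemma~\ref{level-1_quotients}, and finally read off uniqueness from (i)--(iii).

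First I would set $S:=\{D\in\mathrm{Der}_k(F)\mid D^{(u)}|_K=0\}$, where $D^{(u)}\in\mathrm{Der}_k(F_{\Psi})$ denotes the extension of $D$ acting through the $u$-th tensor factor only; by $\Sy_{\Psi}$-invariance of $K$ the condition $D^{(u)}|_K=0$ is independent of $u$. Define $L:=\bigcap_{D\in S}F^{D=0}$. In characteristic $0$ the common field of constants of a family of derivations is algebraically closed in $F$, so (ii) holds automatically. For (i), each $D^{(u)}$ with $D\in S$ kills $K$, so $K\subseteq\bigcap_{D\in S,\,u\in\Psi}F_{\Psi}^{D^{(u)}=0}$; applying Lemma~\ref{horiz-functns} at a single position (with $F_u$ in the role of $F$ and $F_{\Psi\setminus\{u\}}$ in the role of $K$) identifies $F_{\Psi}^{D^{(u)}=0}$ with the fraction field of $F^{D=0}\otimes_k(\text{remaining factors})$, and intersecting these over all positions and all $D\in S$ gives exactly $L_{\Psi}$. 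Hence $K\subseteq L_{\Psi}$.

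Next I would bring in the K\"ahler differentials to prove (iii). Writing $P:=F_{\Psi}\langle\Psi\rangle$, the behaviour of $\Omega_{\bullet|k}$ under tensor products and localization gives $\Omega_{F_{\Psi}|k}\cong\bigoplus P$, one copy of $P$ for each element of an $F$-basis of $\Omega_{F|k}$, with $\Sy_{\Psi}$ permuting the positions. Since $K$ is invariant, $F_{\Psi}\,dK$ is a subobject and $\Omega_{F_{\Psi}|K}=\Omega_{F_{\Psi}|k}/F_{\Psi}\,dK$. Taking $F_{\Psi}$ as the base field, which is a cogenerator of $\Sm_{F_{\Psi}}(\Sy_{\Psi})$ with $P$ injective by \cite{H90} and Appendix~\ref{Addendum-to-H90}, Lemma~\ref{level-1_quotients} shows $\Omega_{F_{\Psi}|K}$ is a direct sum of copies of $P$ and of $F_{\Psi}$; the copies of $P$ record the factor-by-factor directions transcendental over $L$, while the copies of $F_{\Psi}$ form the level-$0$ part, recording precisely the transcendence of $L_{\Psi}$ over $K$ that is not visible factor-by-factor. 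The hard part will be to convert this into the codimension statement (iii): each $F_{\Psi}$-summand is generated by a single element, hence is \emph{supported} on finitely many positions and finitely many coordinates of $F$, so only finitely many such summands meet the image of a given finitely generated $L'\subseteq L$; enlarging $L'$ to an $L''\subseteq L$ absorbing those finitely many coordinates should make $\Omega_{L''_{\Psi}|K\cap L''_{\Psi}}$ finitely generated over $F_{\Psi}$, i.e. $\mathrm{tr.deg}(L''_{\Psi}|K\cap L''_{\Psi})<\infty$. Making the notion of \emph{support} precise and controlling it uniformly, also when $\mathrm{tr.deg}(F|k)=\infty$ and $\Omega_{F_{\Psi}|k}$ is an infinite sum of copies of $P$, is where I expect the real work to lie.

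Finally, for uniqueness I would show that any $\widetilde L$ satisfying (i)--(iii) equals $L$. Conditions (i) and (ii) already force $\widetilde L\supseteq L$: if $f\in L\setminus\widetilde L$ then, $\widetilde L$ being algebraically closed in $F$, there is $D\in\mathrm{Der}_k(F)$ with $D|_{\widetilde L}=0$ and $Df\neq 0$; then $D^{(u)}$ kills $\widetilde L_{\Psi}\supseteq K$, so $D\in S$ and $Df=0$, a contradiction. Conversely (iii) forces $\widetilde L\subseteq L$: if $f\in\widetilde L\setminus L$ there is $D\in S$ with $Df\neq 0$, and the copies $f(u_1),f(u_2),\dots$ at distinct positions are algebraically independent over $K$ by the diagonal Jacobian $\bigl(D^{(u_j)}f(u_i)\bigr)=\mathrm{diag}\bigl((Df)(u_j)\bigr)$; choosing $L''\supseteq k(f)$ as in (iii) would then give $\mathrm{tr.deg}(L''_{\Psi}|K\cap L''_{\Psi})=\infty$, a contradiction. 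Hence $\widetilde L=L$, which also confirms that the recipe for $L$ is forced by $K$.
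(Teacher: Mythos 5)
Your proposal is essentially the paper's own proof: the paper defines the same set $S$ of derivations killing $K$, sets $L$ to be their common constant field, deduces $K\subseteq L_{\Psi}$ via Lemma~\ref{horiz-functns} applied one tensor factor at a time, and obtains (iii) from the decomposition of $\Omega_{F_{\Psi}|K}$ supplied by Lemma~\ref{level-1_quotients} together with Theorem~\ref{H90Theorem1.2}. The one place you flag as "the real work" is handled in the paper not by a support argument but by noetherianity: for $L'$ of finite transcendence degree the object $L_{\Psi}\otimes_{L'_{\Psi}}\Omega_{L'_{\Psi}|k}$ of $\Sm_{L_{\Psi}}(\Sy_{\Psi})$ is noetherian, so its image in $\Omega_{L_{\Psi}|K}$ (which lands in the sum of $F_{\Psi}$-isotypic summands) is finite-dimensional, and an enlargement $L''$ identifies that image with $L_{\Psi}\otimes_{L''_{\Psi}}\Omega_{L''_{\Psi}|L''_{\Psi}\cap K}$; your explicit uniqueness argument via the diagonal Jacobian is correct and is left implicit in the paper.
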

\begin{proof} The object $F_{\Psi}\langle\Psi\rangle$ is a right $F$-vector space under 
$(\sum_if_i[u_i])\cdot a:=\sum_if_ia(u_i)[u_i]$ for all $a\in F$. Then the natural map 
$F_{\Psi}\langle\Psi\rangle\otimes_F\Omega_{F|k}\to\Omega_{F_{\Psi}|k}$, 
$f[u]\otimes\omega\mapsto f\omega(u)$, is bijective. 

By Theorem~\ref{H90Theorem1.2}, $F_{\Psi}$ is a cogenerator of $\Sm_{F_{\Psi}}(\Sy_{\Psi})$, and 
$F_{\Psi}\langle\Psi\rangle$ is injective. Then, by Lemma~\ref{level-1_quotients}, the quotient 
$\Omega_{F_{\Psi}|K}$ of $\Omega_{F_{\Psi}|k}\cong F_{\Psi}\langle\Psi\rangle\otimes_F\Omega_{F|k}$ 
is isomorphic to a direct sum of copies of $F_{\Psi}\langle\Psi\rangle$ and of $F_{\Psi}$. 

The tensor-hom adjunction induces a natural isomorphism of $F$-vector spaces \[\mathrm{Der}_k(F)
\xrightarrow{\sim}\Hom_{F_{\Psi}\langle\Sy_{\Psi}\rangle}(\Omega_{F_{\Psi}|k},F_{\Psi}\langle\Psi\rangle),
\mbox{ given directly by }D\mapsto\left[\omega\mapsto\sum_{u\in\Psi}\langle D_u,\omega\rangle[u]\right],\] 
with $D_u$ corresponding to $D$ under the natural embedding 
$\mathrm{Der}_k(F)\hookrightarrow\mathrm{Der}_{F_{\Psi\smallsetminus\{u\}}}(F_{\Psi})$. 

Let $S:=\Hom_{F_{\Psi}\langle\Sy_{\Psi}\rangle}(\Omega_{F_{\Psi}|K},F_{\Psi}\langle\Psi\rangle)
\subseteq\mathrm{Der}_k(F)$ be the set of derivations `vanishing'\ on $K$, i.e. 
\[S=\{D\in\mathrm{Der}_k(F)~|~D_uf=0\mbox{ for all }f\in K\mbox{ and }u\in\Psi\},\] and 
$M=\{\eta\in\Omega_{F_{\Psi}|k}~|~\langle D_u,\eta\rangle=0\mbox{ for all }D\in S\mbox{ and }u\in\Psi\}$ 
be the common `kernel'\ of $S$. Then $S$ is an $F$-vector subspace, 
$K\subseteq\widetilde{K}:=\{f\in F_{\Psi}~|~\mathrm{d}f\in M\}$, and 
$M/(F_{\Psi}\otimes_K\Omega_{K|k})$ is isomorphic to a direct sum of copies of $F_{\Psi}$. 

For each $u\in\Psi$, $F_{\Psi}$ can be considered as the fraction field 
of $F_{\{u\}}\otimes_kF_{\Psi\smallsetminus\{u\}}$. 

Then, by Lemma~\ref{horiz-functns}, $\{f\in F_{\Psi}~|~D_uf=0\mbox{ for all }D\in S\}$ 
is the fraction field of $L_{\{u\}}\otimes_kF_{\Psi\smallsetminus\{u\}}$, where 
$L:=\{f\in F~|~Df=0\mbox{ for all }D\in S\}$. As $u\in\Psi$ is arbitrary, we get 
$\widetilde{K}=L_{\Psi}$. 

For any $L'|k$ in $L$ with $\mathrm{tr.deg}(L'|k)<\infty$, the object 
$L_{\Psi}\otimes_{L'_{\Psi}}\Omega_{L'_{\Psi}|k}$ of $\Sm_{L_{\Psi}}(\Sy_{\Psi})$ is noetherian, 
so its image $(L_{\Psi}\otimes_{L'_{\Psi}}\Omega_{L'_{\Psi}|k})/(L_{\Psi}\otimes_{L'_{\Psi}}
\Omega_{L'_{\Psi}|k}\cap L_{\Psi}\otimes_K\Omega_{K|k})$ in $\Omega_{L_{\Psi}|K}$ is finite-dimensional. 
Replacing $L'$ by an appropriate extension $L''|L'$ in $L$ with $\mathrm{tr.deg}(L''|k)<\infty$, 
we can identify the space 
$(L_{\Psi}\otimes_{L''_{\Psi}}\Omega_{L''_{\Psi}|k})/(L_{\Psi}\otimes_{L''_{\Psi}}\Omega_{L''_{\Psi}|k}\cap 
L_{\Psi}\otimes_K\Omega_{K|k})$ with $L_{\Psi}\otimes_{L''_{\Psi}}\Omega_{L''_{\Psi}|L''_{\Psi}\cap K}$, 
which means that $\mathrm{tr.deg}(L''_{\Psi}|K\cap L''_{\Psi})<\infty$. \end{proof} 

\begin{remark} \label{unbounded_tr_deg} Though the transcendence degree of $L''_{\Psi}|K\cap L''_{\Psi}$ in 
Theorem~\ref{subfields-of-F-Psi-char-0} is finite, it cannot be bounded if $\mathrm{tr.deg}(F|k)>1$. 

E.g., take $F=k(A,B)$ and, for each integer $n\ge 0$, the $(n+1)$-dimensional unipotent $k$-subgroup 
\[G_n=\{\varphi(A,B)\mapsto\varphi(A+a,B+P(A))~|~a\in k,\ P\in k[T],\ \deg P<n\}\cong
\mathbb G_{\mathrm{a},k}\ltimes\mathbb G_{\mathrm{a},k}^n\mbox{ of }\Aut_{\mathrm{field}}(F|k).\] 

Fix some pairwise distinct $x_1,\dots,x_n\in\Psi$. Then 
\[F_{\Psi}=k(A_{x_1},A'_u;B_{x_1},B_{x_2}^{(1)},\dots,B_{x_n}^{(n-1)},B_v^{(n)}~|~u\in\Psi\smallsetminus\{x_1\},\ 
v\in\Psi\smallsetminus\{x_1,\dots,x_n\}),\] 
where $A'_u:=A_u-A_{x_1}$, $B_u^{(s)}:=\frac{B_u^{(s-1)}-B_{x_s}^{(s-1)}}{A_u-A_{x_s}}$, $B_u^{(0)}:=B_u$, 
while the elements $A'_u$ and $B_v^{(n)}$ are fixed by $G_n$, so (if $\#k=\infty$ or, as usual, 
if $G_n$ is considered as an algebraic group) 
\[(F_{\Psi})^{G_n}=k(A'_u;B_v^{(n)}~|~u\in\Psi\smallsetminus\{x_1\},\ v\in\Psi\smallsetminus\{x_1,\dots,x_n\})\] 
and thus, $\mathrm{tr.deg}(F_{\Psi}|(F_{\Psi})^{G_n})=\dim G_n=n+1$ is finite, but unbounded. \end{remark}

\section{Invariant subfields of \texorpdfstring{$F_{\Psi}$}{} in the case of 
one-dimensional \texorpdfstring{$F|k$}{F|k} of characteristic \texorpdfstring{$0$}{0}} 

\subsection{Finite-dimensional Lie subalgebras of the derivation algebra of 
a one-dimensional field extension of characteristic \texorpdfstring{$0$}{0}} 
Let $F|k$ be a regular field extension of characteristic $0$. Assume that $F|k$ is of transcendence 
degree 1 (i.e., $F|k(X)$ is algebraic for some $X\in F\smallsetminus k$). Then the algebra 
$\mathrm{Der}_k(F)$ of $k$-linear derivations of $F$ is a one-dimensional left $F$-vector space. 
\begin{lemma} \label{vanishing-p-powers} The non-zero abelian Lie $k$-subalgebras of $\mathrm{Der}_k(F)$ 
are one-dimensional. \end{lemma}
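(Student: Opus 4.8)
The plan is to work with a fixed generator of the one-dimensional $F$-vector space $\mathrm{Der}_k(F)$. Fix $D_0\in\mathrm{Der}_k(F)\smallsetminus\{0\}$, so that every $k$-derivation of $F$ is uniquely of the form $fD_0$ with $f\in F$. A direct computation on an arbitrary $h\in F$ yields the bracket formula
\[[fD_0,gD_0]=(fD_0(g)-gD_0(f))\,D_0,\qquad f,g\in F,\]
the second-order terms $fg\,D_0^2h$ cancelling. Since $D_0\neq 0$, the vanishing of a bracket $[fD_0,gD_0]$ is equivalent to the scalar identity $fD_0(g)=gD_0(f)$ in $F$. This is the only structural fact about the Lie algebra I would need.

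The key input is the identification of $\ker D_0:=\{h\in F~|~D_0(h)=0\}$ with $k$ itself. Because $\mathrm{Der}_k(F)=F\cdot D_0$, an element $h$ lies in $\ker D_0$ if and only if it is annihilated by every $k$-derivation of $F$; in characteristic $0$ this forces $h$ to be algebraic over $k$. Indeed, any $h\in F$ transcendental over $k$ admits a $k$-derivation $D$ with $D(h)=1\neq 0$, obtained by extending the derivation $\partial/\partial h$ of $k(h)$ along the (separable, since the characteristic is $0$) algebraic extension $F|k(h)$. As $F|k$ is regular, $k$ is relatively algebraically closed in $F$, so the only elements of $F$ algebraic over $k$ are those of $k$; hence $\ker D_0=k$.

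With these two facts the conclusion is immediate, and this is how I would finish. Let $\mathfrak{a}\subseteq\mathrm{Der}_k(F)$ be a non-zero abelian Lie $k$-subalgebra and choose a non-zero element $fD_0\in\mathfrak{a}$. For any $gD_0\in\mathfrak{a}$, commutativity gives $fD_0(g)=gD_0(f)$, that is $D_0(g/f)=(fD_0(g)-gD_0(f))/f^2=0$, whence $g/f\in\ker D_0=k$ (the case $g=0$ being trivial). Thus $gD_0\in k\cdot(fD_0)$, so $\mathfrak{a}=k\cdot(fD_0)$ is one-dimensional over $k$. The only real content lies in the identification $\ker D_0=k$: this is where the hypothesis of characteristic $0$ is used, to produce enough derivations to detect transcendence, and where regularity of $F|k$ is used, to exclude constants outside $k$. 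I expect that step to require the most care to state cleanly, whereas once it is in hand the commutator condition collapses the entire subalgebra onto a single $k$-line.
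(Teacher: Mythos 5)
Your proof is correct and follows essentially the same route as the paper: the paper fixes a separating element $X$, writes $D_i=f_i\frac{\mathrm{d}}{\mathrm{d}X}$, and observes that $[D_1,D_2]=0$ forces $\frac{\mathrm{d}}{\mathrm{d}X}(f_1/f_2)=0$, hence $f_1/f_2\in k$ — exactly your computation with $D_0$ in place of $\frac{\mathrm{d}}{\mathrm{d}X}$. Your write-up merely makes explicit the step the paper leaves implicit, namely that the kernel of a non-zero derivation is $k$ (via characteristic $0$ and regularity of $F|k$).
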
 
\begin{proof} Fix some $X\in F$ such that $F|k(X)$ is an algebraic separable field extension. If $[D_1,D_2]=0$ 
then $\De(f_1/f_2)=0$, where $D_i=f_i\De$ for some $f_i\in F^{\times}$, i.e. $D_2\in k\cdot D_1$. \end{proof}

\begin{proposition} \label{restr-fin-dim} Let $\mathcal L\subset\mathrm{Der}_k(F)$ be a finite-dimensional 
Lie $k$-subalgebra, and $d:=\dim_k\mathcal L$. Then $d\le 3$, and there exists $X\in F\smallsetminus k$ 
such that \begin{enumerate} \item $\mathcal L=k\De\oplus kX\De\oplus kX^2\De$ {\bf if} $d=3;$ 
\item $\mathcal L=k\De\oplus kX\De$ {\bf if} $d=2$. \end{enumerate} \end{proposition}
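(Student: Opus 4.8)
The plan is to turn the statement into a question about a finite-dimensional $k$-subspace of $F$ closed under a Wronskian bracket. Since $F|k$ has transcendence degree $1$ and characteristic $0$, $\mathrm{Der}_k(F)$ is a one-dimensional $F$-vector space; fix a generator $\partial$ (say $\partial=\frac{\mathrm d}{\mathrm dX_0}$ for a separating $X_0$). Writing a derivation as $f\partial$ identifies $\mathcal L$ with a $d$-dimensional $k$-subspace $V\subset F$, and the commutator becomes the Wronskian $\{f,g\}:=f\,\partial g-g\,\partial f$. So the task is to classify finite-dimensional $V\subset F$ closed under $\{\,,\}$, the target forms being $V=g\langle 1,X\rangle$ and $V=g\langle 1,X,X^2\rangle$ with $g=1/\partial X$ for a suitable $X\in F\smallsetminus k$.

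First I would prove $d\le 3$ by a valuation computation. Extending scalars to $\bar k$ replaces $F$ by the regular extension $\bar F=\mathrm{Frac}(F\otimes_k\bar k)$ and $\partial$ by a generator of $\mathrm{Der}_{\bar k}(\bar F)$, preserving both the bracket and $\dim V$, so I may assume $k=\bar k$. Fix a closed point $v$ of a smooth model, a uniformizer $t$, and $r:=\nu_v(\partial t)\in\mathbb Z$ (finite since $\partial t\neq 0$). A leading-term computation in $\widehat F_v=k((t))$ gives
\[\nu_v(\{f,g\})=\nu_v(f)+\nu_v(g)+r-1\quad\text{whenever }\nu_v(f)\ne\nu_v(g).\]
Since the residue field is $k$, the value set $N=\{\nu_v(f):0\ne f\in V\}$ has exactly $d$ elements and, by closure of $V$ under $\{\,,\}$, is stable under $(p,q)\mapsto p+q+r-1$ for $p\ne q$. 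Writing $N=\{p_1<\dots<p_d\}$, the maximal pair gives $p_{d-1}\le 1-r$ and the minimal pair gives $p_2\ge 1-r$, so $p_2\ge p_{d-1}$, contradicting $p_2\le p_{d-1}$ unless $d\le 3$.

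For the normal forms I would argue over $k$ directly. If $d=2$ then $\mathcal L$ is non-abelian by Lemma~\ref{vanishing-p-powers}, so its derived line is spanned by some $E=e\partial$ with $[D,E]=E$ for a complement $D=h\partial$; the relation $\{h,e\}=e$ says precisely $\partial(h/e)=-1/e$, so $X:=h/e\in F\smallsetminus k$ satisfies $\De=-E$ and $X\De=-D$, i.e.\ $\mathcal L=k\De\oplus kX\De$. If $d=3$, Lemma~\ref{vanishing-p-powers} shows $\mathcal L$ has no $2$-dimensional abelian subalgebra, whereas every solvable $3$-dimensional Lie algebra has one (its derived algebra is nilpotent, hence either a $2$-dimensional abelian subalgebra, or a line whose centralizer is $\ge 2$-dimensional); thus $\mathcal L$ is a (non-split a priori) form of $\mathfrak{sl}_2$. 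Granting a $2$-dimensional subalgebra, the $d=2$ construction yields $X\in F\smallsetminus k$ with $\De\in\mathcal L$ and $X\De\in\mathcal L$, where $\De$ spans the derived line of a Borel and is therefore $\mathrm{ad}$-nilpotent. Writing the third generator as $\varphi\De$, the relations $\mathrm{ad}_{\De}^{\,j}(\varphi\De)=\varphi^{(j)}\De\in\mathcal L$ together with $\mathrm{ad}_{\De}^{\,3}=0$ force $\varphi'''=0$, and since $\ker\De=k$ this gives $\varphi=aX^2+bX+c$ with $a,b,c\in k$ and $a\ne 0$; subtracting $bX\De+c\De$ produces $X^2\De\in\mathcal L$, which is case (1).

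The one genuinely delicate step is the clause ``granting a $2$-dimensional subalgebra'', equivalently producing over $k$ a nonzero $\mathrm{ad}$-nilpotent $\De\in\mathcal L$ (equivalently the split form, equivalently $X\in F$ rather than merely $X\in\bar F$). The natural source is geometric: realizing $\mathcal L$ as regular vector fields on a smooth model $U$ of $F|k$, the isotropy $\mathcal L_P=\{D\in\mathcal L: D(P)=0\}$ at a point $P$ is a subalgebra whose codimension equals the dimension of the orbit through $P$, namely $1$, hence is $2$-dimensional --- provided $P$ is a smooth point with residue field $k$ at which $\mathcal L$ spans the tangent line. So the entire difficulty concentrates in securing such a rational point, i.e.\ in excluding anisotropic forms of $\mathfrak{sl}_2$; this is the step where the geometry of $F|k$, and not merely abstract Lie theory, must be used, and the one I would expect to demand the most care.
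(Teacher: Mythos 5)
Your bound $d\le 3$ and your treatment of $d=2$ follow the paper's own route almost verbatim: the paper likewise fixes a rank-one valuation $v$ of $F|k$ (via an embedding into Puiseux series over $\overline{k}$), takes a basis $(Q_1\De,\dots,Q_d\De)$ of $\mathcal L$ whose members have pairwise distinct values $m_1<\dots<m_d$, reads off from the Wronskian that $m_i+m_j-1$ is again a value for $i<j$, and concludes $m_2\ge 1\ge m_{d-1}$, hence $d\le 3$; for $d=2$ it integrates $[\eta_1,\eta_2]=\eta_1$ to $\eta_1=\frac{\mathrm{d}}{\mathrm{d}R}$, $\eta_2=R\frac{\mathrm{d}}{\mathrm{d}R}$ exactly as you do. Your derivation of the quadratic $\varphi$ from $\mathrm{ad}_{\De}^{3}=0$ is a cosmetic variant of the paper's direct integration of $[\eta_1,\eta_3]=a\eta_2+b\eta_3$.

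The step you explicitly leave open is therefore the only real point of divergence, and your proposal does not close it. In the paper the two-dimensional subalgebra costs nothing extra: since $m_2+m_3-1=m_3$ and any element of $\mathcal L$ of value $\ge m_2$ must lie in $kQ_2\De\oplus kQ_3\De$, that subspace is closed under the bracket, so the valuation filtration already hands you a Borel subalgebra defined over $k$. Note, however, that this — like the initial assertion that $v$ takes exactly $d$ distinct values on $\mathcal L\smallsetminus\{0\}$ — presupposes that the residue field of $v$ on $F$ equals $k$, i.e. essentially the $k$-rational place whose existence you found yourself needing; for the inequality $d\le3$ one can harmlessly pass to $\overline{k}$ first, but for the normal form one cannot. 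Your suspicion about anisotropic forms is in fact justified: for $k=\mathbb R$ and $F$ the function field of the pointless conic $C\colon x^2+y^2+z^2=0$, the space $\Gamma(C,\mathcal T_{C|k})\subset\mathrm{Der}_k(F)$ is a three-dimensional Lie subalgebra isomorphic to $\mathfrak{so}_3(\mathbb R)=\mathrm{Lie}(\mathrm{PGL}_1(\mathbb H))$, which has no nonzero $\mathrm{ad}$-nilpotent element and so cannot equal $k\De\oplus kX\De\oplus kX^2\De$ for any $X$. So clause (1) genuinely requires an extra hypothesis (e.g. $k$ algebraically closed, or a place of $F$ with residue field $k$), under which either your isotropy-subalgebra argument or the paper's valuation argument completes the proof; without it, the anisotropic case cannot be excluded, and no amount of care at that step will produce the required $X$.
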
 
\begin{proof} If $d=2$ then let $\eta_1\in[\mathcal L,\mathcal L]\smallsetminus\{0\}$, and $\eta_2\in\mathcal L$ 
be such that $[\eta_1,\eta_2]=\eta_1$, i.e. $\eta_1(\eta_2/\eta_1)=1$. Set $R:=\eta_2/\eta_1\in F\smallsetminus k$, 
so $\eta_1=\frac{\mathrm{d}}{\mathrm{d}R}$ and $\eta_2=R\frac{\mathrm{d}}{\mathrm{d}R}$. 
As $\eta_1$ is defined uniquely up to a $k^{\times}$-multiple, and $\eta_2$ is defined uniquely 
modulo $[\mathcal L,\mathcal L]$, the class of $R$ in $\mathbb P_k(F/k)$ is well-defined. 

Let us show that $d\le 3$. Fix a rank one valuation $v\colon F^{\times}/k^{\times}\to\mathbb Q$ of $F$ 
(e.g. by embedding $F\subseteq F\otimes_k\overline{k}$ into the field of Puiseux series 
$\mathop{\underrightarrow{\lim}}\overline{k}((X^{1/N}))$). 
Let $S=\{m_1,\dots,m_d\}\subset\mathbb Q$ be the set of values of $v$ 
on $(\De)^{-1}\mathcal L\smallsetminus\{0\}$, and $m_1<\cdots<m_d$. Choose a basis $(Q_1\De,\dots,Q_d\De)$ 
of $\mathcal L$ such that $Q_i=X^{m_i}(1+\dots)$. Then $[Q_i\De,Q_j\De]=(m_j-m_i)X^{m_i+m_j-1}(1+\dots)\De$, 
so $m_i+m_j-1\in S$ for all $i<j$. As $m_1+m_2-1\ge m_1$, one has $m_2\ge 1$. Then $m_d>1$ (if $d\ge 3$), 
so $m_{d-1}+m_d-1>m_{d-1}$, and thus, $m_{d-1}+m_d-1=m_d$, which implies $m_{d-1}=1$. 
This means that $d=3$ (and $m_2=1$, so $m_1=1-m$ and $m_3=m+1$ for some $m>0$). 
Then $Q_2\De$ and $Q_3\De$ generate a two-dimensional Lie subalgebra $\mathcal L'\subset\mathcal L$. 

If $d=3$ then rescale $\eta_2\in\mathcal L'$ so that $[\eta_3,\eta_2]=\eta_3$. Then, as we have seen in 
the case $d=2$, $\eta_2=R\frac{\mathrm{d}}{\mathrm{d}R}$ and $\eta_3=\frac{\mathrm{d}}{\mathrm{d}R}$ 
for some $R\in F\smallsetminus k$. On the other hand, $[\eta_1,\eta_3]=a\eta_2+b\eta_3$ 
for some $a,b\in k$, i.e. $-\frac{\mathrm{d}}{\mathrm{d}R}(\eta_1/\frac{\mathrm{d}}{\mathrm{d}R}) 
\frac{\mathrm{d}}{\mathrm{d}R}=aR\frac{\mathrm{d}}{\mathrm{d}R}+b\frac{\mathrm{d}}{\mathrm{d}R}$, 
or $-\mathrm{d}(\eta_1/\frac{\mathrm{d}}{\mathrm{d}R})=\mathrm{d}(aR^2/2+bR)$, and thus, 
$\eta_1=-(aR^2/2+bR+c)\frac{\mathrm{d}}{\mathrm{d}R}$ for some $c\in k$, so $\mathcal L/\mathcal L'$ 
is spanned by $\frac{\mathrm{d}}{\mathrm{d}R^{-1}}$. \end{proof}

\subsection{Invariant subfields of \texorpdfstring{$F_{\Psi}$}{} for 
one-dimensional \texorpdfstring{$F|k$}{F|k}} \label{degtr=1} 
In this \S\ref{degtr=1}, we still assume that $F|k$ is a regular field extension of 
transcendence degree $1$. 
\begin{lemma} Let $\Psi$ be an infinite set, $F|k$ be a regular field extension of 
transcendence degree $1$, $K|k$ be a non-trivial $\Sy_{\Psi}$-invariant field extension 
in $F_{\Psi}$. Then the transcendence degree of $F_{\Psi}|K$ is finite. \end{lemma}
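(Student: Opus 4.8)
The plan is to deduce the finiteness directly from the structure theorem Theorem~\ref{subfields-of-F-Psi-char-0}, exploiting that $\mathrm{tr.deg}(F|k)=1$ collapses the field $L$ it produces to either $k$ or $F$. First I would apply Theorem~\ref{subfields-of-F-Psi-char-0} to obtain the field extension $L|k$ in $F$ with $K\subseteq L_{\Psi}$, with $L$ algebraically closed in $F$, and with the finite-codimension property (iii). Since $\mathrm{tr.deg}(F|k)=1$, one has $\mathrm{tr.deg}(L|k)\in\{0,1\}$. If $\mathrm{tr.deg}(L|k)=0$, then $L$ is algebraic over $k$; as $F|k$ is regular, $k$ is algebraically closed in $F$, forcing $L=k$ and hence $K\subseteq k_{\Psi}=k$, which contradicts the non-triviality of $K$. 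Therefore $\mathrm{tr.deg}(L|k)=1$, and since $L$ is algebraically closed in $F$ while $F|L$ is algebraic (both fields having transcendence degree $1$ over $k$), we get $L=F$.

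Next I would invoke property (iii) with $L'=k(X)$ for a transcendence basis element $X\in F$: this yields a field $L''$ with $k(X)\subseteq L''\subseteq F$ and $\mathrm{tr.deg}(L''_{\Psi}|K\cap L''_{\Psi})<\infty$. Because $L''\supseteq k(X)$ and $\mathrm{tr.deg}(F|k)=1$, the extension $F|L''$ is algebraic. I would then check that $F_{\Psi}|L''_{\Psi}$ is algebraic as well: any element of $F_{\Psi}$ is a ratio of elements of $\bigotimes_{k,\Psi}F$ supported on a common finite subset $S\subseteq\Psi$, hence lies in $F_S=\mathrm{Frac}(\bigotimes_{k,S}F)$; and for such finite $S$ the regularity of $F|k$ together with that of the subextension $L''|k$ gives $\mathrm{tr.deg}(F_S|k)=|S|=\mathrm{tr.deg}(L''_S|k)$, so that $F_S$ is algebraic over $L''_S\subseteq L''_{\Psi}$.

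Finally, I would conclude by the tower formula for transcendence degree. Since $K\cap L''_{\Psi}\subseteq L''_{\Psi}\subseteq F_{\Psi}$ is a tower and $K\supseteq K\cap L''_{\Psi}$,
\[
\mathrm{tr.deg}(F_{\Psi}|K)\le\mathrm{tr.deg}(F_{\Psi}|K\cap L''_{\Psi})=\mathrm{tr.deg}(F_{\Psi}|L''_{\Psi})+\mathrm{tr.deg}(L''_{\Psi}|K\cap L''_{\Psi}),
\]
and the right-hand side equals $0+(\text{finite})<\infty$ by the two previous steps.

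The only genuinely delicate point is the passage from ``$F|L''$ algebraic'' to ``$F_{\Psi}|L''_{\Psi}$ algebraic'', i.e. controlling the transcendence degrees of the infinite tensor constructions. I expect this to be the main obstacle, though it is routine once handled through the finite-support reduction above combined with the additivity of transcendence degree for tensor products of regular field extensions.
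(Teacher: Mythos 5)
Your argument has a real gap: it only proves the lemma in characteristic $0$. The lemma is stated (and needed) for a regular extension $F|k$ of transcendence degree $1$ in \emph{arbitrary} characteristic --- it sits in \S\ref{degtr=1} \emph{before} the subsubsection restricting to $p=0$, and the introduction explicitly claims finiteness of $d$ for all $p$, reserving only the bound $d\le 3$ for $p=0$. Your entire proof hinges on Theorem~\ref{subfields-of-F-Psi-char-0}, whose hypothesis and proof (via Lemma~\ref{horiz-functns} and K\"ahler differentials killed by derivations) require characteristic $0$; nothing in your argument survives when $p>0$. Within characteristic $0$ the reasoning is essentially correct: $L$ must equal $F$ by relative algebraic closedness and regularity, property (iii) applied to $L'=k(X)$ produces $L''$ with $F|L''$ algebraic, the finite-support reduction does show $F_{\Psi}|L''_{\Psi}$ is algebraic, and the tower formula finishes. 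So as a conditional derivation from the structure theorem it works, but it does not prove the stated lemma.

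The paper's own proof is a one-line elementary argument valid in every characteristic and using nothing beyond $\Sy_{\Psi}$-invariance: since $K\neq k$, pick $f\in K\smallsetminus k$; then $f\in F_S\smallsetminus k$ for some finite $S\subset\Psi$ (which one may take minimal), and the $\Sy_{\Psi}$-translates of $f$ already force $\mathrm{tr.deg}(F_{\Psi}|K)\le\#S-1$. Concretely, for each finite $T\supseteq S$ and each $w\notin T$, a suitable translate $f^{g}\in K\cap F_{T\cup\{w\}}$ is transcendental over $F_{T}$ (because $F_{T}$ is algebraically closed in $F_{T\cup\{w\}}$ by regularity and $f^g$ does not lie in $F_{T}$ by minimality of $S$), so $\mathrm{tr.deg}(K\cap F_T|k)\ge\#T-\#S+1$, whence $\mathrm{tr.deg}(F_T|K\cap F_T)\le\#S-1$ uniformly in $T$. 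You would do better to give this direct counting argument rather than route the statement through the heavy machinery of Theorem~\ref{subfields-of-F-Psi-char-0}.
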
 
\begin{proof} If $K$ contains an element of $F_S\smallsetminus k$ for a finite $S\subset\Psi$ 
then $\mathrm{tr.deg}(F_{\Psi}|K)\le\#S-1$. \end{proof}

\subsubsection{The case of characteristic \texorpdfstring{$0$}{}} 
Our next goal (Theorem~\ref{invar-subf} and Proposition~\ref{alg-sub-KPsi}) is to show that any invariant 
subfield of $F_{\Psi}|k$ is of type $(L_{\Psi})^H$ for some $L$ in $F|k$ and an algebraic $k$-group 
$H\subseteq\Aut_{\mathrm{field}}(L|k)$. 

First, we treat the case of $K$ algebraically closed in $F_{\Psi}$, i.e. with no non-trivial finite 
extensions in $F_{\Psi}$. The case of $K$ algebraically non-closed in $F_{\Psi}$ is 
considered in Proposition~\ref{alg-sub-KPsi} partially. 

Recall that, for a separable closure $\overline{k}$ of $k$, 
$(L_{\Psi})^H=((L_{k,\Psi}\otimes_k\overline{k})^{H(\overline{k})})^{\mathrm{Gal}(\overline{k}|k)} 
=(((L\otimes_k\overline{k})_{\overline{k},\Psi})^{H(\overline{k})})^{\mathrm{Gal}(\overline{k}|k)}$. 
\begin{theorem} \label{invar-subf} Let $\Psi$ be an infinite set, $F|k$ be 
a transcendence degree $1$ regular field extension of characteristic $0$, 
$K\supsetneqq k$ be an $\Sy_{\Psi}$-invariant subfield of $F_{\Psi}$ algebraically closed in $F_{\Psi}$. 

Then the transcendence degree $d$ of $F_{\Psi}$ over $K$ is $\le 3$. 
If $d=3$ then there exists a unique $R\in\mathrm{PGL}_2(k)\backslash(F\smallsetminus k)$ 
such that $K=k\left(\frac{(R(w)-R(x))(R(y)-R(z))}{(R(w)-R(z))(R(x)-R(y))}~|~w,x,y,z\in\Psi\right)
=(k(R)_{\Psi})^{\mathrm{PGL}_{2,k}}$. 
If $d=2$ then there exists a unique $R\in\mathbb P_k(F/k)$ such that 
$K=k\left(\frac{R(u)-R(w)}{R(u)-R(v)}~|~u,v,w\in\Psi\right)
=(k(R)_{\Psi})^{\mathbb G_{\mathrm{a},k}\rtimes\mathbb G_{\mathrm{m},k}}$. 

If $d=1$ then there is a directed 
system $(\pi_{ij}\colon W_i\to W_j)_{ij}$ of isogenies between torsors $W_i$ over geometrically irreducible 
one-dimensional algebraic $k$-groups $E_i$ endowed with a compatible system of $k$-field 
embeddings $\sigma_i:k(W_i)\hookrightarrow F\ ($i.e. $\sigma_i\pi_{ij}^*=\sigma_j$ for all $i,j)$ such that 
$K=\bigcup_i\left(k(W_i)_{\Psi}\right)^{E_i}\subset\bigcup_ik(W_i)_{\Psi}\subseteq F_{\Psi}$, where $E_i$ acts 
on $k(W_i)_{\Psi}$ diagonally.\footnote{The canonical difference morphism $W_i\times W_i\xrightarrow{-}E_i$ 
induces a natural isomorphism $k(E_i)_{\Psi}^{E_i}\xrightarrow{\sim}k(W_i)_{\Psi}^{E_i}$.} \end{theorem}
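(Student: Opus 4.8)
First I would translate the statement into a question about a finite-dimensional Lie algebra of derivations. Since $\mathrm{tr.deg}(F|k)=1$ and $\mathrm{char}\,k=0$, the space $\Omega_{F|k}$ is one-dimensional over $F$, so the isomorphism $\Omega_{F_{\Psi}|k}\cong F_{\Psi}\langle\Psi\rangle\otimes_F\Omega_{F|k}$ from the proof of Theorem~\ref{subfields-of-F-Psi-char-0} identifies $\Omega_{F_{\Psi}|k}$ with a single copy of $F_{\Psi}\langle\Psi\rangle$. As $K\supsetneqq k$ and $k$ is algebraically closed in $F_{\Psi}$, the field $K$ contains a transcendental element, so the image of $F_{\Psi}\otimes_K\Omega_{K|k}$ is a non-zero submodule; by (the proof of) Lemma~\ref{level-1_quotients} the quotient $\Omega_{F_{\Psi}|K}\cong F_{\Psi}^{\oplus d}$ is a finite direct sum of copies of $F_{\Psi}$, where $d=\dim_{F_{\Psi}}\Omega_{F_{\Psi}|K}=\mathrm{tr.deg}(F_{\Psi}|K)<\infty$. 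Dualizing, $\mathrm{Der}_K(F_{\Psi})\cong F_{\Psi}^{\oplus d}$ with trivial $\Sy_{\Psi}$-structure, so the equivariant $K$-linear derivations form $\mathcal L:=\mathrm{Der}_K(F_{\Psi})^{\Sy_{\Psi}}\cong((F_{\Psi})^{\Sy_{\Psi}})^{\oplus d}=k^{\oplus d}$, a $k$-space of dimension $d$ (using $(F_{\Psi})^{\Sy_{\Psi}}=k$). The diagonal map $D\mapsto\widetilde D:=\sum_{u\in\Psi}D_u$ is a Lie-algebra isomorphism of $\mathrm{Der}_k(F)$ onto all equivariant $k$-linear derivations of $F_{\Psi}$ (here one uses $(F_{\Psi})^{\Sy_{\Psi|\{u\}}}=F_u$), under which $\mathcal L$ corresponds to a $d$-dimensional Lie $k$-subalgebra $\mathcal L'\subseteq\mathrm{Der}_k(F)$. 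Proposition~\ref{restr-fin-dim} then yields $d\le 3$ together with the normal forms. Since a $k$-basis of $\mathcal L\cong k^{\oplus d}$ stays $F_{\Psi}$-linearly independent in $F_{\Psi}^{\oplus d}=\mathrm{Der}_K(F_{\Psi})$, the $F_{\Psi}$-span of $\mathcal L'$ is all of $\mathrm{Der}_K(F_{\Psi})$, so the field of constants $(F_{\Psi})^{\mathcal L'}$ equals the common kernel of $\mathrm{Der}_K(F_{\Psi})$, i.e. the algebraic closure of $K$ in $F_{\Psi}$, which is $K$; thus $K=(F_{\Psi})^{\mathcal L'}$ and everything reduces to computing these constants.

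For $d=3$, Proposition~\ref{restr-fin-dim} supplies $R\in F\smallsetminus k$ with $\mathcal L'=k\frac{\mathrm{d}}{\mathrm{d}R}\oplus kR\frac{\mathrm{d}}{\mathrm{d}R}\oplus kR^2\frac{\mathrm{d}}{\mathrm{d}R}$, unique modulo the Möbius substitutions $R\mapsto\frac{aR+b}{cR+d}$ preserving this algebra, i.e. well-defined in $\mathrm{PGL}_2(k)\backslash(F\smallsetminus k)$; the three diagonal derivations are the infinitesimal generators of the diagonal $\mathrm{PGL}_{2,k}$-action on the coordinates $R_u=R(u)$. I would first show $K=(F_{\Psi})^{\mathcal L'}\subseteq k(R)_{\Psi}$: the constants of $\widetilde{\mathrm{d}/\mathrm{d}R}$ are stable under the deck action of $F|k(R)$, and any component transverse to $k(R)_{\Psi}$ would satisfy, along the translation flow $R_u\mapsto R_u+s$, a linear differential equation with no non-zero solution rational in the $R_u$ (a horizontality argument in the spirit of Lemma~\ref{horiz-functns}). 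Granting this, classical invariant theory of $\mathrm{PGL}_2$ acting on tuples of points of $\mathbb P^1$ identifies $(k(R)_{\Psi})^{\mathcal L'}=(k(R)_{\Psi})^{\mathrm{PGL}_{2,k}}$ with the field generated by the cross-ratios, as asserted. The case $d=2$ is the same with $\mathcal L'=k\frac{\mathrm{d}}{\mathrm{d}R}\oplus kR\frac{\mathrm{d}}{\mathrm{d}R}=\mathrm{Lie}(\mathbb G_{\mathrm a,k}\rtimes\mathbb G_{\mathrm m,k})$, with $R$ well-defined modulo $R\mapsto aR+b$, i.e. in $\mathbb P_k(F/k)$, and the invariants generated by the affine ratios $\frac{R(u)-R(w)}{R(u)-R(v)}$.

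The case $d=1$ is the main obstacle. Here $\mathcal L'=kD$ for a single non-zero $D\in\mathrm{Der}_k(F)$ and $K=(F_{\Psi})^{\widetilde D}$, and the task is to integrate $D$ to an algebraic one-parameter group action. I would dualize $D$ to the rational $1$-form $\omega$ with $\langle D,\omega\rangle=1$; whether $\omega$ is exact, logarithmic, or everywhere finite on a smooth projective model distinguishes the one-dimensional groups $\mathbb G_{\mathrm a,k}$, $\mathbb G_{\mathrm m,k}$, and the genus-one case. Writing $F$ as the directed union of its finitely generated $D$-stable subfields, I would realize each as $\sigma_i(k(W_i))$ for a curve $W_i$ on which $D$ becomes a regular invariant vector field after passing to the right model, so that $W_i$ is a torsor under a geometrically irreducible one-dimensional $k$-group $E_i$ with $\mathrm{Lie}(E_i)=kD$, the inclusions of subfields turning into a directed system of isogenies $\pi_{ij}\colon W_i\to W_j$ compatible with the $\sigma_i$. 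For a connected one-dimensional group in characteristic $0$ the Lie-algebra constants coincide with the group invariants, so $(k(W_i)_{\Psi})^{\widetilde D}=(k(W_i)_{\Psi})^{E_i}$, and passing to the union over the cofinal system gives $K=\bigcup_i(k(W_i)_{\Psi})^{E_i}$. The hard part is exactly this globalization: showing that $D$ has an algebraic rather than merely formal flow, and that the ambiguity in the choice of model is captured precisely by a directed system of isogenies between one-dimensional groups and their torsors. This is where the structure theory of vector fields on curves and the classification of one-dimensional algebraic $k$-groups enter, and it is the crux of the theorem.
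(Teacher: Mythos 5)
Your reduction to a finite-dimensional Lie subalgebra of $\mathrm{Der}_k(F)$ is essentially the paper's: the identification $\Omega_{F_{\Psi}|k}\cong F_{\Psi}\langle\Psi\rangle$, the conclusion from Lemma~\ref{level-1_quotients} that $\Omega_{F_{\Psi}|K}$ is a finite sum of copies of $F_{\Psi}$, the isomorphism $\mathrm{Der}_k(F)\xrightarrow{\sim}\mathrm{Der}_k(F_{\Psi})^{\Sy_{\Psi}}$, $\eta\mapsto\sum_u\eta_u$, the use of algebraic closedness of $K$ to recover $K$ as the common kernel of the resulting subalgebra $\mathcal L'$, and the appeal to Proposition~\ref{restr-fin-dim} for $d\le 3$ and the normal forms all match. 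Your treatment of $d=2,3$ (well-definedness of $R$ in $\mathbb P_k(F/k)$, resp.\ $\mathrm{PGL}_2(k)\backslash(F\smallsetminus k)$, and identification of the constants with the fields of affine ratios, resp.\ cross-ratios) is in substance what the paper does.

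The genuine gap is the case $d=1$, and you have correctly located it yourself: you never prove that the single derivation $D$ integrates to an \emph{algebraic} action of a one-dimensional group, which is precisely the content of the statement in that case. Saying that one should pass to a projective model on which $D$ ``becomes a regular invariant vector field'' presupposes the conclusion: a priori the dual $1$-form $1/D$ on a finitely generated $D$-stable subfield $k(\overline C)\subseteq F$ is just some rational $1$-form on some curve $\overline C$, and nothing yet forces $\overline C$ to carry a group (or torsor) structure. The paper closes this gap by the Lefschetz principle: reduce to countable $k\subseteq\mathbb C$, integrate $1/\eta$ analytically to a multivalued primitive $u(x)=\int_*^x 1/\eta$ on the complement $C$ of the poles of $1/\eta$, and observe that a nonconstant $f\in K$ is an analytic function of the differences $u_i-u_v$; the period group of $1/\eta$ is then either trivial, infinite cyclic, or a lattice, and correspondingly $u$, $\exp(\tfrac{2\pi i}{\omega}u)$, or $(\wp_L(u),\wp'_L(u))$ uniformizes $C$ rationally, exhibiting $C$ as $\mathbb G_{\mathrm a}$, $\mathbb G_{\mathrm m}$, or an elliptic curve with $1/\eta$ an invariant form and the field inclusions induced by isogenies; descending to the original (possibly non-closed) $k$ turns the curves into torsors. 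Some such transcendence or uniformization argument is unavoidable here, and your proposal does not contain it, so the $d=1$ part remains unproven as written.
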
 
\begin{proof} For each $x\in\Psi$, the subfield $F_x$ is preserved by the elements of 
$\mathrm{Der}_k(F_{\Psi})^{\Sy_{\Psi}|\{x\}}$, so the restrictions 
$\mathrm{Der}_{F_{\Psi\smallsetminus\{x\}}}(F_{\Psi})^{\Sy_{\Psi|\{x\}}}\xrightarrow{\rho_x}\mathrm{Der}_k(F_x)$ 
and $\mathrm{Der}_k(F_{\Psi})^{\Sy_{\Psi}}\xrightarrow{r_x}\mathrm{Der}_k(F_x)$ are well-defined. In fact, 
$\rho_x$ and $r_x$ are bijective. Denote by $\eta\mapsto\eta_x$ the inverse of the composition of $\rho_x$ 
with the isomorphism $\mathrm{Der}_k(F_x)\xrightarrow[\sim]{\iota_x}\mathrm{Der}_k(F)$ (induced by 
$(x)\colon F\xrightarrow{\sim}F_x$). The composition $\iota_x\circ r_x$ is independent of $x$, 
and thus, gives a natural Lie $k$-algebra isomorphism 
$\mathrm{Der}_k(F)\xrightarrow{\sim}\mathrm{Der}_k(F_{\Psi})^{\Sy_{\Psi}}$, $\eta\mapsto\sum_{u\in\Psi}\eta_u$.

It follows from the preceeding discussion, that $K$ is the common kernel of 
the elements of a finite-dimensional Lie $k$-subalgebra $\mathcal L$ of $\mathrm{Der}_k(F)$: 
$K=\bigcap_{\eta\in\mathcal L}\ker(\sum_{x\in\Psi}\eta_x)$.

If $d=2$ then, by Proposition~\ref{restr-fin-dim}, $K$ is contained in the common kernel of 
$\sum_{x\in\Psi}\frac{\partial}{\partial R(x)}$ and $\sum_{x\in\Psi}R(x)\frac{\partial}{\partial R(x)}$ 
for some $R\in F\smallsetminus k$ with a well-defined class of $R$ in $\mathbb P_k(F/k)$. This means 
that $K$ contains the intersection of the subfield generated over $k$ by $R(x)-R(y)$ for all 
$x,y\in\Psi$ and the subfield generated over $k$ by $R(x)/R(y)$ for all $x,y\in\Psi$. 

If $d=3$ then, by Proposition~\ref{restr-fin-dim}, $K$ is the common kernel of 
$\sum_{x\in\Psi}\frac{\partial}{\partial R(x)}$, $\sum_{x\in\Psi}R(x)\frac{\partial}{\partial R(x)}$ 
and $\sum_{x\in\Psi}R(x)^2\frac{\partial}{\partial R(x)}$ for some $R\in F\smallsetminus k$. This means 
that $K$ is the intersection of the subfields $k\left(R(x)-R(y)~|~x,y\in\Psi\right)$, 
$k\left(R(x)/R(y)~|~x,y\in\Psi\right)$, and $k\left(1/R(x)-1/R(y)~|~x,y\in\Psi\right)$. 

If $d=1$, we invoke the Lefschetz principle, i.e. reduce the problem to the case 
of a countable $k$ and embed $k$ into $\mathbb C$. Let $0\neq\eta\in\mathrm{Der}_k(F)$ be such that 
$\sum_{x\in\Psi}\eta_x$ annihilates $K$. For each $f\in K$, there is a smooth projective curve $\overline{C}$ 
over $k$ with an embedding $k(\overline{C})\hookrightarrow F$ and a finite subset $S\subset\Psi$ such that 
$f\in k(\overline{C})_S\subset k(\overline{C})_{\Psi}$ and $k(\overline{C})$ is $\eta$-invariant (e.g., if 
$\eta=\varphi\frac{\mathrm{d}}{\mathrm{d}X}$ then any $k(\overline{C})$ containing $\varphi$ and $X$ is so). 
Let $C\subseteq\overline{C}$ be the complement to the set of poles of the 1-form $1/\eta$. If considered 
locally in the analytic topology on $C$, $1/\eta=\mathrm{d}u$ for an analytic function $u(x)=\int_*^x1/\eta$. 
Fix some $v\in\Psi$. Then $f$ is an analytic function in variables $T_i$, enumerated by 
$i\in S\smallsetminus\{v\}$, that is evaluated at $T_i=u_i-u_v$. As the path from $\ast$ to $x$ varies, the 
values of the branches of $u(x)$ are modified by the periods of the 1-form $1/\eta$. As $f$ is non-constant, 
there are 3 options for the periods: (1) they are zero; (2) they form a cyclic group $\mathbb Z\cdot\omega$ 
for some $\omega\in\mathbb C^{\times}$; (3) they form a lattice $L$ in $\mathbb C$. 
In the case (1) $u(x)$ is rational and $f\in\mathbb C(u(x)-u(y)~|~x,y\in\Psi)$. In the case (2) 
$Q(x):=\exp(\frac{2\pi i}{\omega}u(x))$ is rational and $f\in\mathbb C(Q(x)/Q(y)~|~x,y\in\Psi)$. 
In the case (3), let $\wp_L(z):=\frac{1}{z^2}+\sum_{\lambda\in L\smallsetminus\{0\}}
\left(\frac{1}{(z-\lambda)^2}-\frac{1}{\lambda^2}\right)$ be the Weierstra\ss\ $\wp$-function, so 
$\wp'_L(z)^2=4\wp_L(z)^3+a\wp_L(z)+b$ for some $a,b\in\mathbb C$. Then $g(x):=\wp_L(u(x))$, 
$h(x):=\wp'_L(u(x))$, $\wp_L(u(x)-u(y))=\left[\frac{1}{2}\frac{h(x)+h(y)}{g(x)-g(y)}\right]^2-g(x)-g(y)$ 
and $\wp'_L(u(x)-u(y))=
\frac{1}{2}(h(x)+h(y))\left[\frac{(g(x)-g(y))(6g(x)^2+a)-(h(x)+h(y))h(x)}{(g(x)-g(y))^3}\right]-h(x)$ 
are rational and $f\in\mathbb C(\wp_L(u(x)-u(y)),\wp'_L(u(x)-u(y))~|~x,y\in\Psi)$. Thus, all such $C$'s 
are one-dimensional algebraic groups, and $1/\eta$ is an invariant 1-form on each of them, i.e. the 
inclusions of the function fields of the curves $C$ are induced by isogenies between them. Coming back 
to the original field $k$, which may not be algebraically closed, the curves $C$ become torsors under 
algebraic groups. \end{proof}

\subsubsection{Invariant subfields of \texorpdfstring{$F_{\Psi}$}{} that are not algebraically closed} 
It is clear that the non-connected $k$-groups of automorphisms of $F|k$ fix in $F_{\Psi}$ invariant 
subfields that are not relatively algebraically closed. Appendix \ref{group-actions-on-fields} provides 
examples of the same type of subfields of $F_{\Psi}$ that are fixed by a {\sl connected 
but non-reduced} $k$-group. 
\begin{lemma} \label{symm-subgrps} Let $\Psi$ be an infinite set, $H$ be a Hausdorff group, 
and $\Gamma$ be a closed subgroup of $\Maps(\Psi,H)$ normalized by $\Sy_{\Psi}$. Then 
$\Gamma=H_1\Maps(\Psi,H_0)$ for a closed subgroup $H_0\subseteq H$ and a closed subgroup $H_1$ 
normalizing $H_0$, where $H_1$ is embedded into $\Maps(\Psi,H)$ as constant maps. \end{lemma}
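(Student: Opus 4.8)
The plan is to understand $\Maps(\Psi,H)$ as the set of functions $\gamma\colon\Psi\to H$, written coordinatewise as $\gamma=(\gamma_u)_{u\in\Psi}$, on which $\Sy_{\Psi}$ acts by permuting the index: $(\sigma\gamma)_u=\gamma_{\sigma^{-1}u}$. The conjugation-invariance of $\Gamma$ under $\Sy_{\Psi}$ means that $\Gamma$ is stable under all coordinate permutations. First I would extract the two candidate subgroups intrinsically. Define $H_0\subseteq H$ as the set of $h\in H$ that occur as a single nonidentity coordinate of some element of $\Gamma$ whose other coordinates are $1$ --- more robustly, since $\Gamma$ need not contain such sharply supported elements, I would instead define $H_0$ to be the closure of the set of values $\gamma_u\gamma_v^{-1}$ where $\gamma\in\Gamma$ and $u,v\in\Psi$, exploiting that a transposition swapping $u,v$ keeps $\Gamma$ invariant so that $\gamma(\sigma\gamma)^{-1}\in\Gamma$ has coordinates measuring how $\gamma$ varies across $\Psi$. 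The subgroup $H_1$ should be recovered as $\Gamma\cap(\text{constant maps})=\{h\in H\mid(h,h,h,\dots)\in\Gamma\}$, which is plainly a closed subgroup of $H$.

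Next I would prove the three structural facts: that $H_0$ is a closed subgroup, that $H_1$ normalizes $H_0$, and that $\Gamma=H_1\Maps(\Psi,H_0)$. The key mechanism is that for a finitely supported permutation $\sigma$ and $\gamma\in\Gamma$, the ``difference'' $\gamma\cdot(\sigma\gamma)^{-1}$ lies in $\Gamma$ and is supported on the finite set moved by $\sigma$; taking $\sigma$ a transposition $(u\,v)$ gives an element of $\Gamma$ whose $u$-th coordinate is $\gamma_u\gamma_v^{-1}$, whose $v$-th coordinate is $\gamma_v\gamma_u^{-1}$, and which is $1$ elsewhere. Averaging / multiplying such elements over many indices, and using that $\Psi$ is infinite so there are always ``fresh'' coordinates equal to $1$ to transport values into, lets me show first that $\Maps(\Psi,H_0)\subseteq\Gamma$, and conversely that every $\gamma\in\Gamma$ differs from a constant map by an element of $\Maps(\Psi,H_0)$. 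The infinitude of $\Psi$ is exactly what guarantees that the ``value at a single coordinate, trivial elsewhere'' manipulations stay inside a set on which enough permutations act, so that the generated coordinatewise-varying part exhausts all of $\Maps(\Psi,H_0)$.

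For the normalization and the product decomposition I would argue as follows. Given $\gamma\in\Gamma$, fix a basepoint $w\in\Psi$ and set $h:=\gamma_w$; then for every $u$ the difference element above shows $\gamma_u h^{-1}=\gamma_u\gamma_w^{-1}\in H_0$, so the map $u\mapsto\gamma_u h^{-1}$ lies in $\Maps(\Psi,H_0)$, giving $\gamma=(u\mapsto\gamma_u h^{-1})\cdot(h,h,\dots)$ once I know the constant map $(h,h,\dots)$ lies in $\Gamma$. That last point comes from taking a limit: conjugating $\gamma$ by a permutation moving $w$ to a far-off coordinate and multiplying, one builds elements of $\Gamma$ approximating the constant $h$, and closedness of $\Gamma$ finishes it, whence $h\in H_1$. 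Normalization $H_1\subseteq N_H(H_0)$ follows because conjugating a coordinatewise-$H_0$ element of $\Gamma$ by a constant map $(h,h,\dots)\in\Gamma$ stays in $\Gamma$ and is again supported coordinatewise, forcing $hH_0h^{-1}\subseteq H_0$.

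The main obstacle I expect is the \emph{topological} bookkeeping: the definitions of $H_0$ and $H_1$ must be taken as closures, and I must verify that the algebraic identities (that the transported single-coordinate elements multiply to arbitrary elements of $\Maps(\Psi,H_0)$, and that constant maps arise as limits) survive passage to the closure in the product topology on $\Maps(\Psi,H)$. Concretely, an element of $\Maps(\Psi,H_0)$ is only determined up to finite approximation on finitely many coordinates, so I must check that the finitely-supported elements produced by transpositions form a \emph{dense} subgroup of $\Maps(\Psi,H_0)\cap\Gamma$, and that $H_0$ is genuinely closed rather than merely a subgroup; handling the case where $H_0$ is nonabelian (so that order of multiplication of the single-coordinate pieces matters) and confirming $H_1$ normalizes $H_0$ at the level of closures is where the care is needed. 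The purely combinatorial permutation argument is routine; the delicacy lies entirely in the Hausdorff/closed-subgroup hypotheses.
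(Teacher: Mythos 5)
Your proposal is correct and takes essentially the same route as the paper: form $\gamma^{-1}\gamma^{\iota}$ for a transposition $\iota$, use the infinitude of $\Psi$ together with the closedness of $\Gamma$ (in the product topology) to isolate elements supported at a single coordinate, generate $H_0$ from the differences $\gamma(x)^{-1}\gamma(y)$ and $H_1$ from the values $\gamma(x)$, and decompose each $\gamma$ as a constant map times an $H_0$-valued map. The only cosmetic difference is that you define $H_1$ as $\Gamma\cap(\mbox{constant maps})$ whereas the paper takes the closed subgroup generated by the values $\gamma(x)$; the two coincide once the decomposition is in place.
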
 
\begin{proof} For any $\gamma\in\Gamma$, $\Gamma$ contains 
$\gamma^{-1}\gamma^{\iota}$, where $\iota\in\Sy_{\Psi}$ is a transposition, say of $x$ and $y$. For any 
finite subset $S\subset\Psi$ there is a transposition of $y$ and an element of $\Psi\smallsetminus S$. 
Therefore, $\Gamma$ contains the element $\gamma'$ with $\gamma'(x)=\gamma(x)^{-1}\gamma(y)$ and $\gamma'(w)=1$ 
for all $w\in\Psi\smallsetminus\{x\}$. Let $H_0$ (respectively, $H_1$) be the closed subgroup of $H$ 
generated by the elements $\gamma(x)^{-1}\gamma(y)$ (respectively, by the elements $\gamma(x)$) for all 
$\gamma\in\Gamma$. Then obviously $H_1$ normalizes $H_0$ and $\Maps(\Psi,H_0)\subseteq\Gamma$. \end{proof} 

\begin{proposition} \label{sub-algebraic} Let $\Psi$ be an infinite set, $F|k$ be a regular field 
extension, $K$ be an $\Sy_{\Psi}$-invariant field extension of $k$ in $F_{\Psi}$ such that $F_{\Psi}|K$ is 
algebraic. Then $K$ contains $F'_{\Psi}$ for an intermediate subfield $F'$ in $F|k$ with algebraic $F|F'$. 

If the characteristic of $k$ is $0$ then $K=\left(L_{\Psi}\right)^{\Gamma}$, 
where $L$ is an intermediate subfield in $F|k$ and $\Gamma$ is a profinite algebraic 
$k$-group of automorphisms of $L|k$ acting diagonally on $L_{\Psi}$. More explicitly, 
$K=\left((L\otimes_k\overline{k})_{\Psi}^{\Gamma(\overline{k})}\right)^{\mathrm{Gal}(\overline{k}|k)}$, 
where $\overline{k}$ is an algebraic closure of $k$. \end{proposition}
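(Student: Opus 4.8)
The plan is to first establish the unconditional first assertion by a coordinatewise analysis, and then, in characteristic $0$, to upgrade it to the description $K=(L_{\Psi})^{\Gamma}$ by Galois theory together with Lemma~\ref{symm-subgrps}. For the first assertion, I would record the elementary fact that, $F|k$ being regular, the subfields $F_S=\mathrm{Frac}(\bigotimes_{k,S}F)$ for finite $S\subset\Psi$ satisfy $F_{S_1}\cap F_{S_2}=F_{S_1\cap S_2}$, so every element of $F_{\Psi}$ has a well-defined minimal (finite) support. Because $K$ is $\Sy_{\Psi}$-invariant and $\Sy_{\Psi}$ permutes the embeddings $(u)\colon F\hookrightarrow F_{\Psi}$ transitively, the subfields $(u)^{-1}(K\cap F_u)\subseteq F$ coincide for all $u$; call their common value $F'$. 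Since each $K\cap F_u$ lies in $K$ and these generate $\bigotimes_{\Psi}F'$, we get $F'_{\Psi}\subseteq K$. To see that $F|F'$ is algebraic, take $t\in F$; as $F_{\Psi}|K$ is algebraic, $t(u)$ has a minimal polynomial $P\in K[T]$. The pointwise stabilizer $\Sy_{\Psi|\{u\}}$ fixes both $t(u)$ and $K$ (setwise), hence fixes $P$ by uniqueness, so its coefficients lie in $F_{\Psi}^{\Sy_{\Psi|\{u\}}}$; the support argument (any $v\neq u$ in a finite support can be moved off it by a transposition fixing $u$) forces those coefficients into $F_u$, i.e. into $K\cap F_u=F'_u$. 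Thus $t$ is algebraic over $F'$, proving the first assertion; note also that each $t(u)$ then has the \emph{same} minimal polynomial over $K$ and over $F'_u$.

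Now assume $\mathrm{char}\,k=0$, so all the algebraic extensions in sight are separable. I would first reduce to $k=\overline{k}$: since $F|k$ is regular, $F\otimes_k\overline{k}$ is a domain with fraction field $\overline{F}$, one has $F_{\Psi}\otimes_k\overline{k}\hookrightarrow\overline{F}_{\overline{k},\Psi}$, and $\mathrm{Gal}(\overline{k}|k)$ acts compatibly; recovering $K$ from its image over $\overline{k}$ by the displayed formula $K=((L\otimes_k\overline{k})_{\Psi}^{\Gamma(\overline{k})})^{\mathrm{Gal}(\overline{k}|k)}$ is exactly the final descent step. Over $\overline{k}$, let $L$ be the maximal subextension of $F|F'$ that is normal (Galois) over $F'$. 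Using that the $K$-conjugates of any $t(u)$ are the roots of its minimal polynomial over $F'_u$ and that $F_u$ is relatively algebraically closed in $F_{\Psi}$, I would check that $K\subseteq L_{\Psi}$ and that $L_{\Psi}|K$ is Galois, with $\Aut_{\mathrm{field}}(L_{\Psi}|K)$ acting coordinatewise, i.e. sitting as a closed subgroup $\Gamma_K\subseteq\Maps(\Psi,\mathrm{Gal}(L|F'))$ (this coordinatewise structure is where the Galois theory of smooth group actions from Appendix~\ref{group-actions-on-fields} is invoked). Since $K$ is $\Sy_{\Psi}$-invariant, $\Gamma_K$ is normalized by $\Sy_{\Psi}$.

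At this point Lemma~\ref{symm-subgrps}, applied with $H=\mathrm{Gal}(L|F')$, writes $\Gamma_K=H_1\,\Maps(\Psi,H_0)$ with $H_0$ closed and normalized by the closed subgroup $H_1$. Setting $L^{\flat}:=L^{H_0}$ (an intermediate field of $F|k$, since $H_0$ fixes $F'$), the coordinatewise part has fixed field $(F_{\Psi})^{\Maps(\Psi,H_0)}=L^{\flat}_{\Psi}$, while the constant part $H_1$ acts diagonally on $L^{\flat}_{\Psi}$ as automorphisms of $L^{\flat}|k$ (it normalizes $H_0$, hence preserves $L^{\flat}$), and $K=(L_{\Psi})^{\Gamma_K}=(L^{\flat}_{\Psi})^{H_1}$. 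Renaming $L^{\flat}$ as $L$ and $H_1$ as $\Gamma$ gives the desired form, and profiniteness of $\Gamma$ is inherited from the profinite Galois group of the algebraic extension $L_{\Psi}|K$; each finite quotient is a finite \'etale (hence algebraic) $k$-group, so $\Gamma$ is a profinite algebraic $k$-group, and descending through $\mathrm{Gal}(\overline{k}|k)$ yields the explicit formula.

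I expect the main obstacle to be the step establishing that $K\subseteq L_{\Psi}$ with $L_{\Psi}|K$ Galois and that every $K$-automorphism is coordinatewise --- i.e. pinning down the correct $L$ (the maximal normal subextension, after which Lemma~\ref{symm-subgrps} removes the residual ``independent-coordinate'' automorphisms) and controlling $\Aut_{\mathrm{field}}(L_{\Psi}|K)$ via the Appendix. The genuinely non-normal part of $F|F'$ must be seen to contribute nothing to $\Gamma$ (it can fail to be a fixed field, as the case $K=F'_{\Psi}$ already shows), and this is precisely what restricting to $L$ achieves; getting these two reductions --- to the normal hull and, within it, to the diagonal part --- to interact correctly is the delicate point of the argument.
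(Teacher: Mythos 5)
Your treatment of the first assertion is sound and is essentially the paper's: the coefficients of the minimal polynomial of $t(u)$ over $K$ lie in $F_{\Psi}^{\Sy_{\Psi|\{u\}}}\cap K=F_u\cap K$, so $F':=(u)^{-1}(K\cap F_u)$ does the job. For the characteristic-$0$ part you also identify the right key ingredient, Lemma~\ref{symm-subgrps}, and the right framework (Galois theory of $\Maps(\Psi,H)$ acting coordinatewise, followed by descent from $\overline{k}$).

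The gap is in the order of your two reductions. You propose to first prove $K\subseteq L_{\Psi}$ for $L$ the maximal subextension of $F|F'$ normal over $F'$, and only then apply Lemma~\ref{symm-subgrps} to $\mathrm{Gal}(L_{\Psi}|K)\subseteq\Maps(\Psi,\mathrm{Gal}(L|F'))$. But the inclusion $K\subseteq L_{\Psi}$ is not reachable by the tools you invoke for it: the statement about minimal polynomials of the elements $t(u)$ only controls elements of $K$ supported at a single coordinate, whereas an element of $K$ with support of size $\ge 2$ has its $F'_{\Psi}$-conjugates in $\widetilde{F}_{\Psi}$ (for a Galois closure $\widetilde{F}$ of $F|F'$) rather than in $F_{\Psi}$, and no single-coordinate consideration forces it into $L_{\Psi}$. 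The inclusion is in fact true, but the proof I see goes through Lemma~\ref{symm-subgrps} applied \emph{upstairs}: $\mathrm{Gal}(\widetilde{F}_{\Psi}|K)$ is a closed subgroup of $\Maps(\Psi,\mathrm{Gal}(\widetilde{F}|F'))$ normalized by $\Sy_{\Psi}$ and containing $\Maps(\Psi,\mathrm{Gal}(\widetilde{F}|F))$ (because $K\subseteq F_{\Psi}$), hence equals $H_1\Maps(\Psi,H_0)$ with $H_0\supseteq\mathrm{Gal}(\widetilde{F}|F)$; since $H_0$ is normalized by $H_1$ and $\langle H_0,H_1\rangle$ has fixed field $F'$, the field $\widetilde{F}^{H_0}$ is a subfield of $F$ normal over $F'$, whence $K\subseteq(\widetilde{F}^{H_0})_{\Psi}\subseteq L_{\Psi}$. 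At that point, however, $L$ and $\Gamma=H_1/(H_0\cap H_1)$ have already been produced, and your intermediate step is superfluous. This is precisely what the paper does: it applies the lemma to $\mathrm{Gal}(\widetilde{F}_{\Psi}|Kk')$ inside the full coordinatewise group of the Galois closure (with the group $\Maps^{\circ}$ accounting for the constants $k'$ when $k$ is not algebraically closed, in place of your reduction to $\overline{k}$) and reads off $L=\widetilde{F}^{H_0}$ and $\Gamma$ as outputs, rather than guessing $L$ in advance. You flag this step yourself as the delicate point, but flagging it does not supply the argument; as written, the proof is circular at exactly that spot.
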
 
\begin{proof} Fix a transcendence basis $B$ of $F|k$, and some $x\in\Psi$. For each $b\in B$, 
consider the minimal polynomial $X^d+\sum_{i=0}^{d-1}a_iX^i\in K[X]$ over $K$ of the element 
$b(x)$ of $F_x\smallsetminus k$, where $F_x$ and $(x)$ are defined on p.\pageref{tensor-factors}. 
Clearly, $a_i\in F_{\Psi}^{\Sy_{\Psi|\{x\}}}\cap K=F_x\cap K$ for all $i$. As $b(x)$ is 
transcendental over $k(B\smallsetminus\{b\})$, so is $a_s$ for some $s$, and therefore, 
sending $b(x)$ to an appropriate coefficient of its minimal polynomial over $K$ induces 
an $\Sy_{\Psi}$-field embedding $k(B)_{\Psi}\hookrightarrow K$ over $k$. 

If $B$ is separating (i.e. $F$ is separable over $k(B)$), let $\widetilde{F}$ be a Galois 
normalization of $F$ over $k(B)$ with the Galois group denoted $H$ (finite if $F|k$ is finitely generated), 
and $k'$ be the algebraic closure of $k$ in $\widetilde{F}$, so $\widetilde{F}_{\Psi}=\widetilde{F}_{k',\Psi}$. 
Denote by $\Maps^{\circ}(\Psi,H)$ the group of those maps that are constant when composed with the restriction 
$H\to\mathrm{Gal}(k'|k)$. Then $\mathrm{Gal}(\widetilde{F}_{\Psi}|k(B)_{\Psi})=\Maps^{\circ}(\Psi,H)$, while 
$\mathrm{Gal}(\widetilde{F}_{\Psi}|K)$ is a closed subgroup normalized by $\Sy_{\Psi}$ 
and containing $\Maps(\Psi,\mathrm{Gal}(\widetilde{F}|F))$.

By Lemma \ref{symm-subgrps}, $\mathrm{Gal}(\widetilde{F}_{\Psi}|Kk')=H_1\Maps(\Psi,H_0)$ 
for a closed subgroup $H_0\subseteq H$ and a closed subgroup $H_1\subseteq N_H(H_0)$, where 
$H_1$ is embedded into $\Maps(\Psi,H)$ as constant maps. As $K\subseteq F_{\Psi}$, one 
has $\mathrm{Gal}(\widetilde{F}_{\Psi}|K)\supseteq\mathrm{Gal}(\widetilde{F}_{\Psi}|F_{\Psi})$, 
and therefore, $\mathrm{Gal}(\widetilde{F}_{\Psi}|Kk')\supseteq\mathrm{Gal}(\widetilde{F}_{\Psi}|F_{\Psi}k')$, 
or equivalently, $H_1\Maps(\Psi,H_0)\supseteq\Maps(\Psi,\mathrm{Gal}(\widetilde{F}|F))$. 

In particular, $H_0\supseteq\mathrm{Gal}(\widetilde{F}|F)$, and therefore, $L:=\widetilde{F}^{H_0}$ 
is a subfield of $F$. Then $H_1$ preserves $L$ and acts on it via its quotient 
$\Gamma:=H_1/(H_0\cap H_1)$, which is a closed subgroup of the compact group $\Aut_{\mathrm{field}}(L|k(B))$, 
while $K$ is the fixed field of $\Gamma$ under the diagonal action on $L_{\Psi}$. \end{proof} 

\begin{lemma} \label{rat-mult-add} Let $k$ be a field of characteristic $0$, $E$ be a geometrically 
connected one-dimensional algebraic $k$-group, and $E\xrightarrow{Q}X$ be a morphism to 
a $k$-curve. Suppose that the composition of $E\times E\xrightarrow{Q\times Q}X\times X$ 
with a rational map $X\times X\stackrel{P}{\dashrightarrow}Y$ to a $k$-curve is 
non-constant and factors through $E\times E\xrightarrow{-}E$. 

Then $Q$ is the composition of the projection $E\to E/\Gamma$ for some finite 
algebraic $k$-subgroup $\Gamma\subset E$ and an embedding $E/\Gamma\hookrightarrow X$. 

In particular, {\rm (i)} $Q$ is an embedding if $E=\mathrm{Spec}(k[T])$ is the 
additive group, i.e., $Q^*k(X)=k(T);$ {\rm (ii)} $Q^*k(X)=k(T^n)$ for some integer 
$n\ge 1$ if $E=\mathrm{Spec}(k[T,T^{-1}])$ is the multiplicative group. \end{lemma}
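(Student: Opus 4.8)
The plan is to identify the finite group $\Gamma$ with the translation-stabilizer of $Q$ and to extract it from the ``difference'' hypothesis by a periodicity argument. Writing the group law of $E$ additively, the hypothesis provides a unique non-constant rational map $\bar P\colon E\dashrightarrow Y$ with the identity of rational functions $P(Q(a),Q(b))=\bar P(a-b)$ on $E\times E$. Since the composite is non-constant, $Q$ is non-constant, hence dominant onto the curve $X$ (replacing $X$ by the closure of its image if necessary), so $Q^*\colon k(X)\hookrightarrow k(E)$ is a finite extension, separable because $\mathrm{char}=0$. I would first set $\Gamma_0:=\mathrm{Stab}(\bar P)=\{t\in E : \bar P\circ\tau_t=\bar P\}$; as $\bar P$ is non-constant and $E$ is connected of dimension $1$, the subgroup $\Gamma_0$ is proper, hence finite and étale.

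The heart of the argument is a periodicity observation: if $Q(a)=Q(a')$ for $a,a'$ in a suitable dense open, substituting into the identity gives $\bar P(a-b)=\bar P(a'-b)$ as rational functions of $b$, i.e.\ $\bar P(z)=\bar P(z+(a'-a))$, so $a'-a\in\Gamma_0$. Hence the fibred product $R:=E\times_X E$ satisfies $R\subseteq\Delta_{\Gamma_0}:=\bigsqcup_{t\in\Gamma_0}\Gamma_t$, where $\Gamma_t=\{(a,a+t)\}$ is the graph of $\tau_t$. I would then analyse the components of $R$ over $\bar k$: separability makes $R$ generically reduced of pure dimension $1$, with $p_1\colon R\to E$ finite of degree $n=[k(E):k(X)]$; since the $\Gamma_t$ are the disjoint irreducible components of $\Delta_{\Gamma_0}$, each component of $R$ is one of the graphs $\Gamma_t$, and $\Gamma_t\subseteq R$ precisely when $Q\circ\tau_t=Q$, i.e.\ $t\in\Gamma:=\mathrm{Stab}(Q)\subseteq\Gamma_0$. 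Counting degrees then yields $R=\bigsqcup_{t\in\Gamma}\Gamma_t$ and $n=|\Gamma|$.

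Two conclusions follow at once. First, $Q(a)=Q(a')\iff a'-a\in\Gamma$, so $Q$ factors as $E\xrightarrow{\pi}E/\Gamma\xrightarrow{\bar Q}X$ with $\bar Q$ injective on points. Second, $\deg(E\to X)=|\Gamma|=\deg(\pi)$, so $\bar Q$ is birational; thus $\bar Q^*$ identifies $k(X)$ with $k(E/\Gamma)=k(E)^\Gamma$, which is the substantive assertion $Q^*k(X)=k(E)^\Gamma$. Since $E/\Gamma$ is a smooth curve and $\bar Q$ is birational and injective, Zariski's main theorem shows $\bar Q$ is an open immersion onto its image when $X$ is normal, giving the embedding $E/\Gamma\hookrightarrow X$. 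All the data are Galois-stable because $Q$ is a $k$-morphism, so $\Gamma$ is a finite $k$-subgroup scheme and the statement descends to $k$. The special cases are then read off from the subgroup structure: $\mathbb G_{\mathrm a}$ has no non-trivial finite subgroup in characteristic $0$, forcing $\Gamma=0$ and $Q^*k(X)=k(T)$; the finite subgroups of $\mathbb G_{\mathrm m}$ are the $\mu_n$, with $\mathbb G_{\mathrm m}/\mu_n\xrightarrow{\sim}\mathbb G_{\mathrm m}$ via $t\mapsto t^n$, whence $k(E)^{\mu_n}=k(T^n)$ and $Q^*k(X)=k(T^n)$.

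The main obstacle I anticipate is the passage from the generic pointwise inclusion $R\subseteq\Delta_{\Gamma_0}$ to the exact equality $R=\bigsqcup_{t\in\Gamma}\Gamma_t$: one must control reducedness of the fibred product and check that every component dominates $E$ (so that no component collapses or escapes $\Delta_{\Gamma_0}$). It is precisely this step that simultaneously delivers injectivity of $\bar Q$ and the degree count forcing $\bar Q$ to be birational, and the separability supplied by characteristic $0$ is what keeps it clean. A secondary, more cosmetic point is the normality hypothesis implicit in calling $\bar Q$ an embedding; the function-field identity $Q^*k(X)=k(E)^\Gamma$ is the unconditional content, and the geometric embedding is its translation under normality of $X$.
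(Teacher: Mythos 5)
Your proof is correct and follows essentially the same route as the paper's: the key step in both is that $Q(a)=Q(b)$ forces $\bar P(z)=\bar P(z+(a-b))$ as rational maps, so $a-b$ lies in the finite translation-stabilizer of the non-constant descended map $\bar P$ (the paper's $S$). The paper compresses the remaining bookkeeping into one line, simply taking $\Gamma:=\{a-b~|~Q(a)=Q(b)\}$ and asserting the conclusion, whereas you make explicit the fibred-product decomposition, the degree count $\deg Q=|\Gamma|$, and the normality caveat for the embedding; this is a welcome elaboration of the same argument rather than a different one.
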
 
\begin{proof} We may assume that $Y$ is smooth and projective. Let $a,b\in E(K)$ be points over 
a field extension $K|k$ such that $Q(a)=Q(b)$ and $X\times\{Q(a)\}$ contains no indeterminacy 
points of $P$. Then, for a morphism $S\colon E\to Y$, 
$S(u-a)=P(Q(u),Q(a))=P(Q(u),Q(b))=S(u-b)\in Y(K(u))$. As $S$ is not constant, 
$a-b$ is a torsion element. Taking $\Gamma:=\{a-b~|~a,b\in E(K),\ Q(a)=Q(b)\}$, we get the assertion. \end{proof} 

For a regular extension $F|k$ of transcendence degree 1 and characteristic 0, Theorem~\ref{invar-subf} 
describes algebraic closures $L$ in $F_{\Psi}$ of $\Sy_{\Psi}$-invariant extensions $K$ of $k$ in 
$F_{\Psi}$. Now the task is to describe all possible $K$'s for each $L$. In the setting of 
Theorem~\ref{invar-subf}, we consider only the case of trivial torsors. Lemma~\ref{triv_torsors} and 
Example~\ref{exa-triv_torsors} provide situations where all torsors are trivial anyway. 
\begin{proposition} \label{alg-sub-KPsi} Let $\Psi$ be an infinite set, $k$ be a field of characteristic 
$0$, and $L$ be one of the fields $\Ka:=(k(E)_{\Psi})^E$ for a geometrically irreducible one-dimensional 
$k$-group $E$, $\Ka':=\bigcup_i(k(E_i)_{\Psi})^{E_i}$ for a compatible directed system of isogenies of 
one-dimensional $k$-groups $E_i$, 
$\Kc:=k\left(\frac{u-v}{v-w}~|~\mbox{ {\rm pairwise distinct} }u,v,w\in\Psi\right)$, 
$\Kd:=k\left(\frac{(t-u)(v-w)}{(v-u)(t-w)}~|~\mbox{ {\rm pairwise distinct} }t,u,v,w\in\Psi\right)$. 

Let $K$ be an $\Sy_{\Psi}$-invariant field extension of $k$ in $L$ over which $L$ is algebraic. 

Then {\rm (a)} $K=L^{\Gamma^{\Psi}\rtimes H}$ for a finite $k$-subgroup $\Gamma\subset E$ 
normalized by a finite $k$-group $H$ of $k$-group automorphisms of $E$ if $L=\Ka;$\footnote{e.g., 
$K=\Kc((u-v)^n~|~u,v\in\Psi)$ for an integer $n\ge 1$ if $E=\mathbb G_{\mathrm{a},k};$ 
$H$ is a subgroup of the cyclic group $\mathrm{End}_{k\mbox{{\rm -group}}}(E)^{\times}$ 
of order dividing and less than 12, if $E$ is a torus or an elliptic curve.} 
{\rm (a}$')\ K=\bigcup_i(k(\Lambda\backslash E_i)_{\Psi})^{E_i\rtimes H}$ for a profinite subgroup 
$\Lambda\subseteq\prlim_iE_i(\overline{k})_{\mathrm{tor}}$ normalized by 
a finite $k$-group $H\subseteq(\mathrm{End}_{k\mbox{{\rm -group}}}(E_i)\otimes\mathbb Q)^{\times}$ of $k$-group 
automorphisms of $\prlim_iE_i$, if $L=\Ka';$ {\rm (c,d)} $K=L$ if $L=\Kc$ or $L=\Kd$. \end{proposition}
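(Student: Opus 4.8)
The plan is to treat each of the four fields $L$ by a single mechanism, modelled on the proof of Proposition~\ref{sub-algebraic}: realize an $\Sy_{\Psi}$-invariant $K$ with $L|K$ algebraic as the fixed field of a closed, $\Sy_{\Psi}$-normalized group of $k$-automorphisms preserving the relevant geometric structure, bring that group into the normal form of Lemma~\ref{symm-subgrps}, and then read off its geometric content from Lemma~\ref{rat-mult-add}. Throughout I may assume $k=\overline{k}$, the descent to arbitrary $k$ being the $\mathrm{Gal}(\overline{k}|k)$-argument recorded in the explicit formula $K=((L\otimes_k\overline{k})_{\Psi}^{\Gamma(\overline{k})})^{\mathrm{Gal}(\overline{k}|k)}$ displayed before Theorem~\ref{invar-subf}. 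In every case the inclusion $\supseteq$ is easy, since $\Gamma^{\Psi}\rtimes H$ (and its pro-finite analogue in (a$'$)) is a pro-finite group of $\Sy_{\Psi}$-commuting $k$-automorphisms of $L$ and $L$ is a pro-finite Galois extension of its fixed field; so the work is the reverse inclusion.

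Consider $L=\Ka=(k(E)_{\Psi})^E$. Since the generic fibre of the $E$-torsor $E^{\Psi}\to E^{\Psi}/E$ is geometrically integral, $k(E)_{\Psi}|\Ka$ is regular and $\Ka$ is algebraically closed in $k(E)_{\Psi}$; combined with $\Ka|K$ algebraic this shows that $\Ka$ is exactly the algebraic closure of $K$ in $k(E)_{\Psi}$, so the group $G':=\Aut_{\mathrm{field}}(k(E)_{\Psi}|K)$ stabilizes $\Ka$ and contains the diagonal $E$ (which fixes $\Ka\supseteq K$). The decisive input is the two-point analysis: for a pair $u,v$ the field $K\cap k(E)_{\{u,v\}}$ lies in $(k(E)_{\{u,v\}})^E=k(P_u-P_v)\cong k(E)$, hence equals $k(X)$ for a dominant $Q\colon E\to X$. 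The additive relation $P_u-P_w=(P_u-P_v)-(P_w-P_v)$ among three differences places $Q$ into the situation of Lemma~\ref{rat-mult-add} (the witnessing map $S$ being non-constant as soon as $K\neq k$), whence $Q\colon E\to E/\Gamma$ for a finite subgroup $\Gamma\subset E$; by $\Sy_{\Psi}$-invariance $\Gamma$ is independent of the pair (up to the inversion coming from swapping $u,v$). Now $G'$ preserves the $E$-torsor structure and so lies in $\Maps(\Psi,E)\rtimes\Aut(E)$ (relative translations and a diagonal $k$-group automorphism); applying Lemma~\ref{symm-subgrps} to the translation part $G'\cap\Maps(\Psi,E)$ yields $E\cdot\Maps(\Psi,\Gamma)$, so $\Gamma^{\Psi}=\Maps(\Psi,\Gamma)$ acts by relative translations, while $G'/(G'\cap\Maps(\Psi,E))$ embeds into $\Aut(E)$ and is finite because $\Ka|K$ is algebraic (a positive-dimensional image would make the extension transcendental). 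This finite $H$ normalizes $\Gamma$ and is cyclic of order dividing $12$ in the torus and elliptic cases, a subgroup $\mu_n$ of $\mathbb G_{\mathrm m}=\Aut(\mathbb G_{\mathrm a})$ when $E=\mathbb G_{\mathrm a}$. Since the diagonal $E$ acts trivially on $\Ka$, this gives $K=\Ka^{\Gamma^{\Psi}\rtimes H}$, which is (a); statement (a$'$) then follows by passing to the inverse limit over $(\pi_{ij})$, the compatible finite subgroups assembling into a profinite $\Lambda\subseteq\prlim_iE_i(\overline{k})_{\mathrm{tor}}$ and the automorphism parts into a finite subgroup of $(\mathrm{End}_{k\text{-grp}}(E_i)\otimes\mathbb Q)^{\times}$.

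For $L=\Kc=(k(T)_{\Psi})^{\mathbb G_{\mathrm a}\rtimes\mathbb G_{\mathrm m}}$ and $L=\Kd=(k(T)_{\Psi})^{\mathrm{PGL}_2}$ the same analysis runs over $(\mathbb P^1)^{\Psi}$, now with the connected group $G$ (the Borel, resp. $\mathrm{PGL}_2$) acting diagonally and transitively on the factor. The relevant $\Sy_{\Psi}$-normalized group $G'$ now sits in $\Maps(\Psi,\mathrm{PGL}_2)$, so Lemma~\ref{symm-subgrps} puts it in the form $H_1\Maps(\Psi,H_0)$ with a per-factor finite subgroup $H_0\subseteq\mathrm{PGL}_2$ normalized by $G$. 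But no nontrivial finite subgroup of $\mathrm{PGL}_2$ is normalized by the Borel (conjugation of a torsion element by $\mathbb G_{\mathrm a}$ leaves its fixed form) or by all of $\mathrm{PGL}_2$ (which is simple), so $H_0=1$; and since the Borel and $\mathrm{PGL}_2$ are self-normalizing, a finite increase of the constant part $H_1$ is ruled out by algebraicity of $L|K$ together with the fact that $\Kc|\Kd$ is transcendental. Hence $G'=G$ and $K=L$, giving (c,d).

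I expect the crux to be the $\Ka$ case, specifically checking that the relation furnished by the three-point identity genuinely satisfies the non-degeneracy hypothesis of Lemma~\ref{rat-mult-add} (so that $Q$ is forced to be an isogeny onto $E/\Gamma$) and that the abstract normal form $H_1\Maps(\Psi,H_0)$ of Lemma~\ref{symm-subgrps} is matched cleanly with the geometric pair $(\Gamma,H)$ — in particular that $\Gamma$ is pair-independent and inversion-stable, so that $\Gamma^{\Psi}$ is well defined and $\Sy_{\Psi}$-stable, and that the two-point datum really generates $K$ rather than merely bounding it. The reduction from $\overline{k}$ and the two rigid cases $\Kc,\Kd$ should be comparatively routine.
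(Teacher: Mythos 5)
Your proposal correctly identifies the two auxiliary results that carry the paper's proof (Lemma~\ref{symm-subgrps} and Lemma~\ref{rat-mult-add}), but the load-bearing step is asserted rather than proved. You claim that the group cutting out $K$ inside $L$ ``preserves the $E$-torsor structure and so lies in $\Maps(\Psi,E)\rtimes\Aut(E)$'' (and, in cases (c,d), ``sits in $\Maps(\Psi,\mathrm{PGL}_2)$''). A priori the relevant automorphisms are merely field automorphisms of the quotient field $L=(k(E)_{\Psi})^E$ (resp.\ of $\Kc$, $\Kd$) fixing $K$, and nothing in your argument shows they lift to geometric automorphisms of $E^{\Psi}$ respecting the factors; establishing a statement of exactly this kind is the main content of the proof. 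The paper's route is to delete finitely many indices, observe that as an $\Sy_{\Psi'}$-field ($\Psi'=\Psi\smallsetminus\{x\}$, resp.\ $\Psi\smallsetminus\{x,y\}$, $\Psi\smallsetminus\{x,y,z\}$) the field $L$ becomes isomorphic to $\widetilde F_{\Psi'}$ with the \emph{standard} action, apply Proposition~\ref{sub-algebraic} to get $K=(F_{\Psi'})^H$ for an intermediate $F$ and a profinite $H$ acting diagonally, and use topological simplicity of $\Sy_{\Psi}$ to force $H$ to commute with all of $\Sy_{\Psi}$ and hence to come from $\Aut_{\mathrm{field}}(\widetilde F|k)$. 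Only after this reduction do the leftover transpositions $(xu)$, which act non-standardly on $\widetilde F_{\Psi'}$, supply the factorization hypothesis of Lemma~\ref{rat-mult-add}. You flag the ``two-point datum really generates $K$'' issue yourself as the crux; it is, and it is resolved by this re-coordinatization plus Proposition~\ref{sub-algebraic}, not by the unproved torsor-preservation claim.

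A second, independent problem is in (c,d). Your argument runs inside $\Maps(\Psi,\mathrm{PGL}_2)$ normalizing the diagonal Borel resp.\ $\mathrm{PGL}_2$ and concludes by self-normalization, but the automorphisms that actually must be excluded are \emph{diagonal} elements of $\mathrm{PGL}_2(k)$ acting on the quotient coordinates $\xi_u$ (i.e.\ $\Sy_{\Psi'}$-equivariant automorphisms of $L\cong k(\xi_u~|~u\in\Psi')$), which your group does not see. These are not all trivial for formal group-theoretic reasons: in case (c) the element $T\mapsto\frac{T-2}{2T-1}$ survives the commutation constraints coming from the transpositions $(xy)$ and $(ux)$ and is only eliminated by checking that the induced involution of $\Kc$ fails a multiplicativity identity; in case (d) the candidate $T\mapsto T^{-1}$ is eliminated by an explicit cross-ratio computation. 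A normalizer argument in $\Maps(\Psi,\mathrm{PGL}_2)$ cannot substitute for these computations.
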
 
\begin{proof} Fix some pairwise distinct $x,y,z\in\Psi$. Set $\Psi':=\Psi\smallsetminus\{x\}$ if $L=\Ka$ 
or $\Ka'$; $\Psi':=\Psi\smallsetminus\{x,y\}$ if $L=\Kc$; $\Psi':=\Psi\smallsetminus\{x,y,z\}$ if $L=\Kd$. 

If considered merely as an $\Sy_{\Psi'}$-field, $L$ is identified with $\widetilde{F}_{\Psi'}$ with 
the natural $\Sy_{\Psi'}$-action, where $\widetilde{F}=k(E)$ if $L=\Ka$, $\widetilde{F}=\bigcup_ik(E_i)$ 
if $L=\Ka'$, and $\widetilde{F}=k(\xi)$ (so $L=k(\xi_u~|~u\in\Psi')$) if $L$ is either $\Kc$ 
(with $\xi_u:=\frac{u-y}{x-y}$) or $\Kd$ (with $\xi_u:=\frac{(u-y)(z-x)}{(z-u)(x-y)}$), 
since $\Ka=k(E\backslash E^{\Psi})=k(E^{\Psi'})$, $\Ka':=\bigcup_ik(E_i^{\Psi'})$, 
$\Kc=k\left(\begin{pmatrix}\mathbb G_{\mathrm{m},k}&\mathbb G_{\mathrm{a},k}\\
0&1\end{pmatrix}\backslash(\mathbb A^1_k)^{\Psi}\right)=k\left((\mathbb A^1_k)^{\Psi'}\right)$, 
$\Kd=k\left(\mathrm{PGL}_{2,k}\backslash(\mathbb P^1_k)^{\Psi}\right)=k\left((\mathbb P^1_k)^{\Psi'}\right)$, 
where all actions on $(-)^{\Psi}$ are diagonal. 

Suppose that $H$ is a finite group acting on a smooth $\Sy_{\Psi}$-set $S$ so that the $H$-action is normalized 
by $\Sy_{\Psi}$. The centralizer of $H$ in $\Sy_S$ (i.e. $\bigcap_{h\in H}\{g\in\Sy_S~|~ghg^{-1}=h\}$) is closed. 
For any open proper subgroup of $\Sy_{\Psi}$, the finite intersections of its conjugates form a base of open 
subgroups. Thus, $H$ commutes with an open subgroup of $\Sy_{\Psi}$, i.e. the conjugation homomorphism 
$\Sy_{\Psi}\to\Aut(H)$ is continuous and non-injective. As $\Sy_{\Psi}$ is topologically simple, the $H$-action 
actually commutes with the whole $\Sy_{\Psi}$. In particular, if $S$ looks as $\widetilde{F}_{\Psi'}$ 
then $H$ can be identified with a finite subgroup of $\Aut_{\mathrm{field}}(\widetilde{F}|k)$ acting diagonally.

By Proposition~\ref{sub-algebraic} (with $\Psi$ replaced by $\Psi'$) 
there is an intermediate subfield $F$ in $\widetilde{F}|k$ and an algebraic profinite 
$k$-subgroup $H\subseteq\Aut_{\mathrm{field}}(F|k)$ such that 
$K=\left(F_{\Psi'}\right)^H$. Set $F':=F^H$. Clearly, $F'_{\Psi'}\subseteq K\subseteq F_{\Psi'}$. 
In the case $L=\Ka$, let $X$ and $Y$ be smooth projective curves over $k$ with, respectively, $k(X)=F$ 
and $k(Y)=F'$. Let $E\xrightarrow{Q}X\xrightarrow{R}Y$ be the morphisms induced by the inclusions 
$F'\subseteq F\subseteq\widetilde{F}$. The transposition $(xu)\in\Sy_{\Psi}$ on $\widetilde{F}_{\{u,v\}}$ 
is induced by the involution $E\times E\xrightarrow{(A_u,B_v)\mapsto(-A_u,B_v-A_u)}E\times E$. 
Then $(xu)^*F'_v\subseteq(xu)^*F'_{\{u,v\}}\subseteq(xu)^*F_{\{u,v\}}\cap K\subseteq(xu)^*F_{\{u,v\}}
\subseteq\widetilde{F}_{\{u,v\}}$, so $(xu)^*F'_v\subseteq\widetilde{F}_{\{u,v\}}\cap K\subseteq F_{\{u,v\}}$. 
This corresponds to fact that the composition $E\times E\xrightarrow{-}E\xrightarrow{Q}X\xrightarrow{R}Y$ 
factors through $E\times E\xrightarrow{Q\times Q}X\times X$. 
This also determines the $\Sy_{\Psi}$-action on $\widetilde{F}_{\Psi'}$ in the case $L=\Ka'$. 
In the cases (c,d), the transposition $(xu)\in\Sy_{\Psi}$ transforms $\xi_v$ to $\xi_v/\xi_u$, 
so these cases fit into the above scheme with $E$ being the multiplicative group. 

By Lemma~\ref{rat-mult-add}, in the cases (a,a$'$,c,d) $k(X)=k(E)^{\Gamma}$ for some finite 
$k$-subgroup $\Gamma\subset E$, and therefore, the group $\Sy_{\Psi}$ acts on $k(X)_{\Psi'}$, 
and $K=(k(X)_{\Psi'})^H$ for a finite group $H$ of $\Sy_{\Psi}$-automorphisms of $k(X)_{\Psi'}$. 
In particular, $X=E$ if $E$ is the additive group, $X=\mathrm{Spec}(k[T^{\pm n}])$ for 
some integer $n\ge 1$ if $E=\mathrm{Spec}(k[T^{\pm 1}])$ is the multiplicative group. 
There are more options if $E$ is an elliptic curve. 

Any $\Sy_{\Psi'}$-field automorphism of $k(E)_{\Psi'}|k$ comes from an automorphism of $k(E)|k$. 
It is easy to see that any $\Sy_{\Psi}$-field automorphism of $\Ka\cong k(E)_{\Psi'}$ over $k$ comes 
from an automorphism of the $k$-group $E$. Then $H$ is the group $\mu_n$ of $n$-th roots of unity for 
some integer $n\ge 1$ acting on $k(\Psi)$ by $\zeta\colon u\mapsto\zeta u$ if $E$ is the additive group; 
if $H$ is non-trivial then its only non-trivial automorphism acts on $k(\xi_u~|~u\in\Psi')$ by 
$\xi_u\mapsto\xi_u^{-1}$ if $E$ is the multiplicative group; 
$H$ is a cyclic group of order 1, 2, 3, 4, or 6 if $E$ is an elliptic curve. 

In the cases (a,c,d) $K$ contains $R(Q(\xi_u/\xi_v))$. This means that $R(Q(\xi_u/\xi_v))=P(Q(\xi_u),Q(\xi_v))$ 
for a rational function $P$ over $k$ in two variables. By Lemma~\ref{rat-mult-add}, $k(Q(T))=k(T^n)$ 
for some integer $n\ge 1$, so $k(Q(\xi_u)~|~u\in\Psi')=k(\xi_u^n~|~u\in\Psi')$. 

The case of $\Ka'$ follows, since $\Ka'$ is a union of subfields of type $\Ka$, while the only 
non-trivial automorphism of a subgroup of $\mathbb Q$ corresponds to the involution $u\mapsto u^{-1}$. 

In the cases (c,d), we already know that $k(Q(T))=k(T^n)$ for some integer $n\ge 1$. 
As the transposition $(xy)\in\Sy_{\Psi}$ transforms $\xi_u$ to $1-\xi_u$, $K$ 
contains $R(Q(1-\xi_u))$. Then $f(u):=R(Q(1-\xi_u))=P(Q(\xi_u))$ for some $P\in k(T)$, 
so $f(1-\zeta+u)=f(\zeta^{-1}u)=f(u)$ for any $\zeta\in\mu_n$, so $f\in k$ unless $n=1$. 
Now, in the case (c), if an automorphism $\xi_u\mapsto\frac{a\xi_u+b}{c\xi_u+d}$ of $\Kc|k$ commutes 
with the transpositions $(xy)$ and $(ux)$ then $\frac{(c-a)T+d-b}{cT+d}=\frac{aT-a-b}{cT-c-d}$ 
and $\frac{cT+d}{aT+b}=\frac{a+bT}{c+dT}$ (as $\xi_u^{(ux)}=\xi_u^{-1}$). 
The only non-identical element of $\mathrm{PGL}_{2,k}$ satisfying these conditions is $\frac{T-2}{2T-1}$. 
The corresponding automorphism of $\Kc$ would be the involution $\frac{u-v}{v-w}\mapsto\frac{u+v-2w}{v+w-2u}$. 
However, this involution is not multiplicative: $-\frac{t-v}{v-w}=\frac{u-v}{v-w}\cdot\frac{t-v}{v-u}$, 
but $-\frac{t+v-2w}{v+w-2t}\neq\frac{u+v-2w}{v+w-2u}\cdot\frac{t+v-2u}{v+u-2t}$. 

In the case (d), we already know that $k(Q(T))=k(T)$. 
It remains to show that any $\Sy_{\Psi}$-automorphism $\alpha$ of $\Kd|k$ is trivial. 
For all $u\in\Psi'$, $\alpha$ acts on $\xi_u$'s by a common element of $\mathrm{PGL}_2(k)$. As $\alpha$ 
commutes with $\Sy_{\Psi}$, it acts in the same way on the whole $\Sy_{\Psi}$-orbit of $\xi_u$. 
As $\frac{(x-u)(y-z)}{(x-y)(u-z)}\frac{(u-z)(x-t)}{(u-x)(z-t)}=\frac{(x-t)(y-z)}{(x-y)(t-z)}$, 
$\alpha$ should be multiplicative, i.e. $\alpha$ is given by $T^{\pm 1}\in\mathrm{PGL}_2(k)$. 
But $\xi_u-\xi_v=\frac{(v-u)(y-z)(x-z)}{(x-y)(u-z)(v-z)}$, so 
$\frac{\xi_u-\xi_v}{\xi_u-\xi_w}=\frac{(w-z)(v-u)}{(w-u)(v-z)}$, while 
$\xi_u^{-1}-\xi_v^{-1}=\frac{(x-y)(x-z)(u-v)}{(x-u)(x-v)(y-z)}$, so 
$\frac{\xi_u^{-1}-\xi_v^{-1}}{\xi_u^{-1}-\xi_w^{-1}}=\frac{(x-w)(u-v)}{(x-v)(u-w)}
\neq\frac{(w-u)(v-z)}{(w-z)(v-u)}$. Therefore, $\alpha$ should be trivial. \end{proof}

\section{Symmetric irreducible subvarieties of infinite cartesian powers of curves} 
\label{subvar-of-curve-powers} 
Let $X$ be a geometrically irreducible curve over a field $k$. The following result, which is 
essentially \cite[Proposition 2.7]{NS}, shows that the symmetric integral $k$-subschemes of 
infinite cartesian powers of $X$, other than themselves, are contained in $X$ embedded diagonally. 
\begin{proposition} Let $F|k$ be a regular field extension of transcendence degree $1$, 
and $\Psi$ be an infinite set. Then the only non-zero $\Sy_{\Psi}$-invariant prime ideal 
in $\mathcal O_{\Psi}:=\bigotimes_{k,\ u\in\Psi}F$ is the kernel of the multiplication map 
$\mathcal O_{\Psi}\to F$, $\otimes_{u\in\Psi}f_u\mapsto\prod_{u\in\Psi}f_u$. \end{proposition}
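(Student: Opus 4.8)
The plan is to show that any nonzero $\Sy_{\Psi}$-invariant prime $\mathfrak p\subset\mathcal O_{\Psi}$ contains the kernel $\mathfrak m$ of the multiplication map; since the quotient $\mathcal O_{\Psi}/\mathfrak m\cong F$ is a field, $\mathfrak m$ is maximal, so this containment forces $\mathfrak p=\mathfrak m$ and simultaneously yields both existence and uniqueness. First I would record that $\mathfrak p\cap F_u=0$ for every $u\in\Psi$ (otherwise $\mathfrak p$ would contain a unit of the field $F_u$), and then choose a nonzero $\alpha\in\mathfrak p$ whose support $T\subset\Psi$ (the finite set of tensor factors it genuinely involves) has minimal cardinality $n$; the previous remark gives $n\ge 2$. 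I would also fix a finitely generated regular subextension $F_0|k$ of transcendence degree $1$ with $\alpha\in\bigotimes_{k,T}F_0$ and a transcendence generator $t\in F_0\smallsetminus k$.

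Next I would pass to $E:=\mathrm{Frac}(\mathcal O_{\Psi}/\mathfrak p)$, on which $\Sy_{\Psi}$ acts by field automorphisms, with coordinate embeddings $\iota_u\colon F\hookrightarrow E$, $f\mapsto\overline{f(u)}$. Minimality of $n$ means $\mathfrak p\cap\mathcal O_{T'}=0$ whenever $|T'|<n$, so the images of any $n-1$ factors remain algebraically independent. Writing $T=\{u_1,\dots,u_n\}$ and $B_0:=\bigotimes_{k,\{u_1,\dots,u_{n-1}\}}F_0$, the element $\alpha$ survives in the one-dimensional domain $\mathrm{Frac}(B_0)\otimes_kF_0$ and generates there a nonzero, hence maximal, prime; its residue field is therefore finite over $\mathrm{Frac}(B_0)$. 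Consequently $\tau_{u_n}:=\iota_{u_n}(t)$ is algebraic over the subfield $\overline{L_0}\subset E$ generated by the images of the first $n-1$ factors.

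The collapse then comes from a root count. For each of the infinitely many $m\in\Psi\smallsetminus\{u_1,\dots,u_{n-1}\}$ the transposition $(u_n\,m)$ lies in $\Sy_{\Psi}$, fixes $\overline{L_0}$ pointwise (since it fixes $u_1,\dots,u_{n-1}$), and sends $\tau_{u_n}$ to $\tau_m:=\iota_m(t)$; hence every $\tau_m$ is a root of the single minimal polynomial of $\tau_{u_n}$ over $\overline{L_0}$. A nonzero polynomial has finitely many roots, so $\tau_m=\tau_{m'}$ for some $m\ne m'$, i.e. $t(m)-t(m')\in\mathfrak p$ --- a nonzero element of support $2$. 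If $n\ge 3$ this contradicts minimality, so $n=2$; and by transitivity of $\Sy_{\Psi}$ on pairs, $t(u)-t(v)\in\mathfrak p$ for all $u,v$, making $\tau:=\iota_u(t)$ independent of $u$.

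Finally I would upgrade this to all of $\mathfrak m$. Every $f\in F$ is algebraic over $k(t)$; reducing its minimal polynomial modulo $\mathfrak p$ gives a fixed nonzero polynomial over $k(\tau)$ (its coefficients being rational functions of the $u$-independent element $\tau$) of which all the $\iota_u(f)$ are roots. Finiteness of the root set and the infinitude of $\Psi$ force $\iota_u(f)=\iota_v(f)$ for some $u\ne v$, whence $f(u)-f(v)\in\mathfrak p$ for all $u,v$ by transitivity; thus $\mathfrak m\subseteq\mathfrak p$ and $\mathfrak p=\mathfrak m$. I expect the main obstacle to be the generic-fibre step: turning ``$\mathfrak p$ nonzero'' into genuine algebraicity of one coordinate over the others, uniformly and without Noetherian hypotheses on the possibly infinitely generated $F$. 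Descending to the finitely generated $F_0$ (so that $\mathrm{Frac}(B_0)\otimes_kF_0$ is a one-dimensional Noetherian domain) is what makes the dimension and maximality bookkeeping legitimate, and verifying that the transposition really fixes $\overline{L_0}$ pointwise is the subtle hinge that powers the root-counting collapse.
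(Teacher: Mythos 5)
Your proof is correct, but it takes a genuinely different route from the paper's. The paper argues topologically: once $\mathfrak p$ contains a nonzero element of $\bigotimes_{k,\,u\in S}F$ with $S$ finite, invariance forces every $\#S$ of the coordinate images in $\mathcal O_{\Psi}/\mathfrak p$ to be algebraically dependent (this is where $\mathrm{tr.deg}(F|k)=1$ enters), so the fraction field of the quotient has finite transcendence degree over $k$ and its automorphism group is locally compact; since $\Sy_{\Psi}$ is not locally compact and is topologically simple, the induced action on the quotient is trivial, which is exactly the congruence $f(u)\equiv f(v)\pmod{\mathfrak p}$. You reach the same key congruence by an explicit pigeonhole instead: passing to a finitely generated one-dimensional model makes the coordinate $\tau_{u_n}$ algebraic over the field generated by the other coordinates, and the infinitely many transpositions fixing those coordinates place all the $\tau_m$ among the finitely many roots of one minimal polynomial, forcing $t(m)-t(m')\in\mathfrak p$ and then, since $F$ is algebraic over $k(t)$, all of $\mathfrak m\subseteq\mathfrak p$; maximality of $\mathfrak m$ finishes it, exactly as in the paper's (implicit) endgame. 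Your version is more elementary and self-contained --- it needs neither the local compactness of $\Aut_{\mathrm{field}}(K|k)$ for $K|k$ of finite transcendence degree nor the topological simplicity of $\Sy_{\Psi}$ --- at the cost of more commutative-algebra bookkeeping. Two cosmetic points: $\alpha$ does not literally ``generate a prime'' in $\mathrm{Frac}(B_0)\otimes_kF_0$; what you mean is that the trace of $\mathfrak p$ there is a nonzero (it contains $\alpha$) and surviving (it misses $B_0\smallsetminus\{0\}$, by minimality of the support) prime, hence maximal. And the finiteness of its residue field over $\mathrm{Frac}(B_0)$ should be read off from a finitely generated model $\mathrm{Frac}(B_0)\otimes_kA_0$ via Zariski's lemma before localizing. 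Neither affects correctness.
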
 
\begin{proof} Let an $\Sy_{\Psi}$-invariant prime ideal $\mathfrak{p}\subset\mathcal O_{\Psi}$ 
contain a non-zero element in $\bigotimes_{k,\ u\in S}F\subset\mathcal O_{\Psi}$ for a finite subset 
$S\subset\Psi$. Then the fraction field $K$ of $\mathcal O_{\Psi}/\mathfrak{p}$ is of transcendence 
degree $<\#S$ over $k$, so the automorphism group of $K$ over $k$ is locally compact. As (i) $\Sy_{\Psi}$ 
is not locally compact, (ii) the finite intersections of the conjugates of an arbitrary open proper subgroup 
of $\Sy_{\Psi}$ form a base of its open subgroups, the $\Sy_{\Psi}$-action on $\mathcal O_{\Psi}/\mathfrak{p}$ 
is not faithful. Then, as $\Sy_{\Psi}$ is topologically simple, the $\Sy_{\Psi}$-action on 
$\mathcal O_{\Psi}/\mathfrak{p}$ is trivial, i.e. $f(u)\equiv f(v)\pmod{\mathfrak{p}}$ for all $f\in F$, 
and $u,v\in\Psi$. \end{proof} 

\begin{remark} Let $F|k$ be a regular field extension, $\Psi$ be an infinite set, and $\mathfrak{p}$ be 
an $\Sy_{\Psi}$-invariant prime ideal in $\mathcal O_{\Psi}:=\bigotimes_{k,\ u\in\Psi}F$. It is plausible 
that $\mathfrak{p}$ is the kernel of the natural map $\mathcal O_{\Psi}\to\bigotimes_{L,\ u\in\Psi}F$ 
for an intermediate subfield $L$ in $F|k$ which is algebraically closed in $F$, so that 
$A:=\mathcal O_{\Psi}/\mathfrak{p}$ is isomorphic to $\bigotimes_{L,\ u\in\Psi}F$. 

In other words, if $X$ is a geometrically irreducible variety over $k$ then any integral 
$\Sy_{\Psi}$-invariant subscheme of $X^{\Psi}$ is the $\Psi$-th fibre power $W^{\Psi}_Y$ of a subvariety 
$W$ of $X$ over a dominant rational map $W\dasharrow Y$ with geometrically irreducible general fibre. 

Let us show that $A=\bigotimes_{B,\ u\in\Psi}(BF)$, where $B:=A^{\Sy_{\Psi}}$ is 
algebraically closed in $A$, and $BF$ is an integral quotient ring of $B\otimes_kF$. 

\begin{proof} The ring $B$ is algebraically closed in $A$, since the $\Sy_{\Psi}$-orbit of any element in 
$A$ algebraic over $B$ is finite, while there are no proper open subgroups of finite index in $\Sy_{\Psi}$. 

Fix some $u\in\Psi$. Let us show by induction on $n\ge 0$ (the case $n=0$ being trivial) that 
any $h_1,\dots,h_n\in F_u\subset A$ algebraically independent over $B$ are also algebraically 
independent over the $B$-subalgebra $\widetilde{C}$ of $A$ generated by the image in $A$ of 
$\bigotimes_{k,\ \Psi\smallsetminus\{u\}}F$. Indeed, otherwise $h_1,\dots,h_n$ are algebraically dependent 
over the $B$-subalgebra $C$ of $A$ generated by the image in $A$ of $\bigotimes_{k,\ u\in S}F$ 
for a finite set $S\subset\Psi\smallsetminus\{u\}$. This dependence is given by an irreducible polynomial 
$P$ over the fraction field of $C$. By induction hypothesis, this $P$ is unique, if we fix one of its 
monomials and require this monomial to have coefficient 1. However, the coefficients of $P$ belong to 
the fraction field of $C$, so they are fixed by $\Sy_{\Psi|S}$. Fix some $g\in\Sy_{\Psi|\{u\}}$ such that 
$S\cap g(S)=\varnothing$. As $P=P^g$, while $\Sy_{\Psi|S}$ and $\Sy_{\Psi|g(S)}$ generate $\Sy_{\Psi}$, 
the coefficients of $P$ are fixed by $\Sy_{\Psi}$, i.e. they belong to $B$, contradicting our assumption. 
\end{proof} \end{remark}

\section{Some finitely generated \texorpdfstring{$\Sy_{\Psi}$}{}-extensions of fields 
of type \texorpdfstring{$F_{\Psi}$}{}} \label{Some-fin-gen-field-exts} 
Any automorphism group $G$ of any field $K$ acts naturally on the set of birational types of 
$K$-varieties. If the $G$-action on $K$ extends to the function field $K(\mathbb W)$ of a $K$-variety 
$\mathbb W$ then the birational type of $\mathbb W$ is fixed by the natural $G$-action. An obvious 
source of such extended $G$-actions is given by varieties $\mathbb W=W\times_{K^G}K$ (called, along 
with their function fields, {\sl isotrivial}) for all geometrically irreducible $K^G$-varieties $W$. 

Thus, to certain extent, the study of extensions of the $G$-action on $K$ to $G$-actions 
on the function fields over $K$ splits to (i) the study of the birational types fixed by 
the natural $G$-action, (ii) the study of $G$-actions on isotrivial extensions of $K$. 

\subsection{Isotriviality of function fields of varieties of general type and of curves} \label{isotriv} 
For any {\sl permutation group} $G$ and a smooth $G$-field $K$, denote by $\mathrm{Pic}_K(G)$ the group (under 
tensor product over $K$) of isomorphism classes of objects of $\Sm_K(G)$ that are one-dimensional over $K$. 

For each smooth $G$-module $M$, set 
$H^*(G,M):=\mathrm{Ext}^*_{\Sm_{\mathbb Z}(G)}(\mathbb Z,M)$. 
Obviously, $\mathrm{Pic}_K(G)$ is canonically isomorphic to $H^1(G,K^{\times})$. 

We are mainly interested in the case of $G=\Sy_{\Psi}$. By \cite[Theorem 3.5 and Corollary 3.7]{EvansHewitt}, 
if a finite abelian group $A$ is considered 
as a trivial $\mathfrak{S}_{\Psi}$-module then $H^{>0}(\Sy_{\Psi},A)=0$. 

\begin{lemma} \label{Pic-torsion} Let $G$ be a permutation group, $K$ be a smooth $G$-field and $n>0$ 
be an integer. Set $k:=K^G$ and $\mu_n:=\{z\in K^{\times}~|~z^n=1\}$. Then there is a natural exact sequence 
\[H^1(G,\mu_n)\to{}_n\mathrm{Pic}_K(G)\xrightarrow{\beta}
(K^{\times}/K^{\times n})^G/k^{\times}\xrightarrow{\xi}H^2(G,\mu_n).\] 
\end{lemma}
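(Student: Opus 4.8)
The plan is to obtain the sequence by splicing the two short exact sequences of smooth $G$-modules extracted from the Kummer four-term sequence, and then reading off exactness from the resulting long exact sequences in $H^\ast(G,-)$. Since $\mathrm{Pic}_K(G)=H^1(G,K^\times)$ by the remark preceding the lemma, and $H^\ast(G,-)=\mathrm{Ext}^\ast_{\Sm_{\mathbb Z}(G)}(\mathbb Z,-)$ is the derived functor of $M\mapsto M^G$ on the Grothendieck category $\Sm_{\mathbb Z}(G)$, every short exact sequence of smooth $G$-modules yields a long exact cohomology sequence; this is the only external ingredient. Write $A:=K^\times$ and let $K^{\times n}\subseteq A$ denote the smooth $G$-submodule of $n$-th powers (submodules and quotients of smooth modules are smooth, and $\mu_n\subseteq A$ is smooth for the same reason). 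The $n$-th power endomorphism of $A$ factors as $A\xrightarrow{\pi}K^{\times n}\xrightarrow{\iota}A$ with $\pi(x)=x^n$ and $\iota$ the inclusion, giving the two short exact sequences of smooth $G$-modules $1\to\mu_n\to A\xrightarrow{\pi}K^{\times n}\to 1$ (call it SES1) and $1\to K^{\times n}\xrightarrow{\iota}A\to K^\times/K^{\times n}\to 1$ (call it SES2). The key formal fact I will use throughout is that $\iota\circ\pi$ is the $n$-th power map, so $\iota_\ast\pi_\ast$ is multiplication by $n$ on every $H^i(G,A)$.

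Next I would construct the two maps. From SES2, the connecting homomorphism $\partial_2\colon(K^\times/K^{\times n})^G\to H^1(G,K^{\times n})$ has kernel the image of $A^G=k^\times$, so it induces an injection $\overline{\partial_2}\colon(K^\times/K^{\times n})^G/k^\times\hookrightarrow H^1(G,K^{\times n})$ whose image is exactly $\ker\bigl(\iota_\ast\colon H^1(G,K^{\times n})\to H^1(G,A)\bigr)$. This identification is the technical heart of the argument: it realizes the somewhat unusual middle term of the stated sequence as a subgroup of an honest cohomology group. I then define $\beta$ on ${}_n\mathrm{Pic}_K(G)={}_nH^1(G,A)$ by $\beta:=\overline{\partial_2}^{-1}\circ\pi_\ast$; this is legitimate because for $c\in{}_nH^1(G,A)$ one has $\iota_\ast\pi_\ast c=nc=0$, so $\pi_\ast c\in\ker\iota_\ast=\mathrm{im}\,\overline{\partial_2}$. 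I define $\xi:=\partial_1\circ\overline{\partial_2}$, where $\partial_1\colon H^1(G,K^{\times n})\to H^2(G,\mu_n)$ is the degree-$1$ connecting map of SES1; since $\overline{\partial_2}$ already kills $k^\times$, this $\xi$ is well defined on the quotient.

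Finally I would verify the three exactness assertions by diagram chasing the two long exact sequences, using $\iota_\ast\pi_\ast=[n]$. At ${}_n\mathrm{Pic}_K(G)$: $\ker\beta=\{c\in{}_nH^1(G,A)\mid\pi_\ast c=0\}$, and SES1 identifies $\ker\pi_\ast$ with the image of $H^1(G,\mu_n)$, which is $n$-torsion and hence already lands in ${}_nH^1(G,A)$, giving exactness there. At the middle term: transporting along $\overline{\partial_2}$, $\ker\xi$ corresponds to $\ker\iota_\ast\cap\ker\partial_1=\ker\iota_\ast\cap\mathrm{im}\,\pi_\ast$ (using SES1 for $\ker\partial_1=\mathrm{im}\,\pi_\ast$), while $\mathrm{im}\,\beta$ corresponds to $\pi_\ast({}_nH^1(G,A))$; the equality of these two sets is precisely the statement that $\pi_\ast c\in\ker\iota_\ast$ iff $nc=\iota_\ast\pi_\ast c=0$. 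Each map is built from canonical connecting homomorphisms, so the whole sequence is natural in $K$. I expect the only real obstacle to be bookkeeping rather than conceptual: one must keep the two connecting maps $\partial_1,\partial_2$ straight, confirm that $K^{\times n}$ and $K^\times/K^{\times n}$ are genuinely smooth $G$-modules so that SES1 and SES2 live in $\Sm_{\mathbb Z}(G)$, and check that the identification $\overline{\partial_2}$ is compatible with the multiplication-by-$n$ relation; once these are in place the exactness is a formal consequence.
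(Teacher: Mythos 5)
Your proof is correct, but it goes by a genuinely different route than the paper's. You splice the Kummer factorization $K^\times\xrightarrow{\pi}K^{\times n}\xrightarrow{\iota}K^\times$ into two short exact sequences of smooth $G$-modules and extract everything formally from the two long exact sequences, using $\iota_*\pi_*=[n]$; the key step, identifying $(K^{\times}/K^{\times n})^G/k^{\times}$ with $\ker\bigl(\iota_*\colon H^1(G,K^{\times n})\to H^1(G,K^{\times})\bigr)$ via the connecting map of the second sequence, is sound (its kernel is exactly the image of $(K^\times)^G=k^\times$), and your diagram chase at the middle term correctly reduces to the equivalence $\pi_*c\in\ker\iota_*\iff nc=0$. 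The paper instead writes everything in explicit $1$-cocycles: $\beta((f_{\sigma}))=[a]$ with $a^{\sigma}/a=f_{\sigma}^n$, and $\xi([a])=[(b_{\sigma}b_{\tau}^{\sigma}b_{\sigma\tau}^{-1})]$ with $a^{\sigma}/a=b_{\sigma}^n$, then checks exactness by hand; one can verify that your $\overline{\partial_2}^{-1}\circ\pi_*$ and $\partial_1\circ\overline{\partial_2}$ recover exactly these formulas, so the two constructions agree. Your approach buys naturality and the exactness verifications essentially for free (given that $\Sm_{\mathbb Z}(G)$ is a Grothendieck category, so $\mathrm{Ext}^*_{\Sm_{\mathbb Z}(G)}(\mathbb Z,-)$ has long exact sequences), at the cost of the bookkeeping with two connecting maps; the paper's cocycle computation is self-contained and directly produces the concrete description of $\beta$ in terms of line bundles (the footnote) that is convenient in the later applications. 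The only points worth stating explicitly in your write-up are the ones you already flag: smoothness of $\mu_n$, $K^{\times n}$ and $K^\times/K^{\times n}$ (clear, since sub- and quotient modules of smooth modules are smooth), and that $H^1(G,\mu_n)$ is $n$-torsion so its image automatically lands in ${}_n\mathrm{Pic}_K(G)$.
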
 
\begin{proof} In terms of 1-cocycles $G\xrightarrow{\sigma\mapsto f_{\sigma}}K^{\times}$, define $\beta$ 
by $\beta((f_{\sigma}))=[a]$, where 
$a^{\sigma}/a=f_{\sigma}^n$ for all $\sigma\in G$.\footnote{If $\mathcal L$ is an object of 
$\Sm_K(G)$ with $\mathcal L^{\otimes_K^n}\cong K$, choose non-zero elements $\pi\in\mathcal L$ and 
$\lambda\in(\mathcal L^{\otimes_K^n})^G$, and set $\beta([\mathcal L]):=[\pi^{\otimes n}/\lambda]$.} 

Define $\xi$ by $[a]\mapsto[(b_{\sigma}b_{\tau}^{\sigma}b_{\sigma\tau}^{-1})]$, where 
$a^{\sigma}/a=b_{\sigma}^n$ for some 1-cochain $(b_{\sigma})$ with values in $K^{\times}$. 
If $\xi a=0$ then there exists a 1-cochain $(\zeta_{\sigma})$ with values in $\mu_n$ such that 
$(b_{\sigma}\zeta_{\sigma})$ is a 1-cocycle with values in $K^{\times}$, so it defines an element of 
$\mathrm{Pic}_K(G)$. Obviously, it is of order $n$, while $\beta$ maps it to $a$. Clearly, $\xi\beta=0$. 

If $\beta((f_{\sigma}))=0$ then there exists $b\in K^{\times}$ such that $(b^n)^{\sigma}/b^n=f_{\sigma}^n$ 
for all $\sigma\in G$, and therefore, $(bf_{\sigma}/b^{\sigma})$ is a 1-cocycle with values in $\mu_n$. 
This shows that our sequence is exact. \end{proof} 

\begin{proposition} \label{isotriv_gen-type_curves} Let $G$ be a permutation group, and $L|K$ be a smooth 
$G$-field extension. Suppose that \begin{enumerate} \item \label{generat-fin-dim} 
any smooth $K\langle G\rangle$-module finite-dimensional over $K$ is a direct sum of copies of $K;$ 
\item $L$ is the function field {\rm either} of a smooth projective genus $0$ curve $\mathbb W$ over $K$, 
{\rm or} of a normal projective variety $\mathbb W$ over $K$ such that the pluricanonical map 
$\mathbb W\dashrightarrow\mathbb P(\Gamma(\mathbb W,\omega^{\otimes s}_{\mathbb W|K})^{\vee})$ 
is generically injective for some $s>0$. \end{enumerate} 
Then $L^G=k(W)$ for an irreducible variety $W$ over $k:=K^G$ and $L=K(W)$. \end{proposition}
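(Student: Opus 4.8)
The plan is to realise $\mathbb W$, in both cases, through an \emph{intrinsic} projective embedding coming from a finite-dimensional space of differential forms on $L|K$, and then to use hypothesis~(\ref{generat-fin-dim}) twice in order to descend that embedding to $k$. Write $n:=\mathrm{tr.deg}(L|K)=\dim\mathbb W$. The $L$-line $\Lambda:=(\bigwedge^n\Omega_{L|K})^{\otimes s}$ (with $s>0$ in the general-type case, and $n=1$, $\Lambda:=\Omega_{L|K}^{\vee}$ in the genus-$0$ case) is one-dimensional over $L$ and is canonically attached to the extension $L|K$. In the general-type case I would take $V:=\Gamma(\mathbb W,\omega_{\mathbb W|K}^{\otimes s})\subset\Lambda$, the space of everywhere regular pluricanonical forms, with the given pluricanonical map as $\mathbb W\dashrightarrow\mathbb P(V^{\vee})$, birational onto its image by generic injectivity; in the genus-$0$ case I would take $V:=\Gamma(\mathbb W,\omega_{\mathbb W|K}^{-1})$, a three-dimensional space whose map embeds $\mathbb W$ as a conic in $\mathbb P^2_K=\mathbb P(V^{\vee})$. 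Both spaces are birational invariants of the normal projective model, so they depend only on $L|K$.

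Next I would verify that $V$ is a smooth $K\langle G\rangle$-submodule of $\Lambda$ and apply~(\ref{generat-fin-dim}). Each $g\in G$ permutes $K$ inside $L$ and hence induces a $g$-semilinear automorphism $g_{*}$ of $\Omega_{L|K}$ (well defined over $K$ because $\mathrm{d}K=0$ in $\Omega_{L|K}$ and $g(K)=K$), and thus of $\Lambda$; since $G$ fixes the birational type of $\mathbb W$, the operator $g_{*}$ carries the normal projective model to an abstractly isomorphic one and therefore preserves everywhere-regular pluricanonical (resp. anticanonical) forms, so $g_{*}V=V$. Smoothness is immediate, since any $\omega\in V$ is a finite expression in elements of $L$, each with open stabiliser. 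Being finite-dimensional over $K$, hypothesis~(\ref{generat-fin-dim}) gives $V=K\otimes_kV^{G}$; consequently $\mathbb P(V^{\vee})=\mathbb P((V^{G})^{\vee})\times_kK$ is defined over $k$, with $G$ acting through the standard semilinear action induced by $K^{G}=k$.

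Finally I would descend the image. Its Zariski closure $\mathbb W'\subseteq\mathbb P(V^{\vee})$ is $G$-invariant, so its homogeneous ideal $I=\bigoplus_dI_d$ is $G$-stable; each $I_d$ is a finite-dimensional smooth $K\langle G\rangle$-submodule of $\mathrm{Sym}^dV=K\otimes_k\mathrm{Sym}^d(V^{G})$, whence $I_d=K\otimes_kI_d^{G}$ by~(\ref{generat-fin-dim}) again. Thus $I$ is generated by $G$-invariants and $\mathbb W'=W\times_kK$ for the $k$-subvariety $W\subseteq\mathbb P((V^{G})^{\vee})$ cut out by $I^{G}$. Birationality of the map gives $L=K(\mathbb W)=K(\mathbb W')=K(W)$; writing $L=\mathrm{Frac}(A\otimes_kK)$ for the coordinate ring $A$ of an affine chart of $W$ (a domain, since $W\times_kK=\mathbb W'$ is irreducible), the $G$-action is trivial on $k(W)=\mathrm{Frac}(A)$ and, as $K^{G}=k$, a standard descent argument yields $L^{G}=\mathrm{Frac}(A)=k(W)$.

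The step I expect to be the main obstacle is the $G$-equivariance in the second paragraph: one must make precise that the space of everywhere regular pluricanonical (resp. anticanonical) forms, a priori only a birational invariant for $K$-linear maps, is carried to itself by the $g$-semilinear operators $g_{*}$, i.e.\ reconcile the birational invariance of plurigenera with the genuinely non-$K$-linear $G$-action on $L$. Once this is settled, the remaining descent steps are formal applications of hypothesis~(\ref{generat-fin-dim}).
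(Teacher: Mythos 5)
Your proposal is correct and follows essentially the same route as the paper: take the finite-dimensional space of regular pluricanonical sections (resp.\ anticanonical sections, which for a curve is exactly the paper's space of regular vector fields $\Gamma(\mathbb W,\mathcal T_{\mathbb W|K})$) sitting inside the one-dimensional $L$-line $(\Omega^{\dim\mathbb W}_{L|K})^{\otimes s}$, observe that $G$ acts on it $K$-semilinearly and smoothly, and use hypothesis~(\ref{generat-fin-dim}) to produce a $k$-lattice of invariants from which $W$ is built. The only cosmetic difference is the last step --- the paper generates the coordinate ring of $W$ directly by the ratios $\lambda/\eta$ with $\lambda,\eta\in\Gamma_s^G$, whereas you descend the homogeneous ideal of the pluricanonical image degree by degree --- and the equivariance point you flag as the main obstacle is exactly the assertion the paper makes (and likewise does not elaborate) in step~(ii) of its argument.
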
 
\begin{proof} For each integer $s$, let $\Gamma_s:=\Gamma(\mathbb W,\omega^{\otimes s}_{\mathbb W|K})$ 
be the (finite-dimensional) $K$-vector space of global sections of the pluricanonical sheaf 
$\omega^{\otimes s}_{\mathbb W|K}$ on $\mathbb W$. The group $G$ (i) naturally acts $L$-semilinearly 
and smoothly on the one-dimensional $L$-vector space $(\Omega^{\dim\mathbb W}_{L|K})^{\otimes^s_L}$, 
(ii) preserves the $K$-vector space $\Gamma_s\subset(\Omega^{\dim\mathbb W}_{L|K})^{\otimes^s_L}$, 
and acts on it $K$-semilinearly. 

By the condition (\ref{generat-fin-dim}), the $k$-vector space $\Gamma_s^G$ is a $k$-lattice in 
the $K$-vector space $\Gamma_s$. Fix a non-zero $\eta\in\Gamma_s^G$, and let the $k$-subalgebra of 
$L$ generated by the ratios $\lambda/\eta\in L$ for all $\lambda\in\Gamma_s^G$ 
be the coordinate ring of an affine variety $W$ over $k$. If the pluricanonical map 
$\mathbb W\dashrightarrow\mathbb P(\Gamma_s^{\vee})$ is generically injective 
for some $s>0$ then $L=K(\mathbb W)$ is the function field of $W\times_kK$. 

If $\mathbb W$ is a smooth irreducible projective curve over $K$ of genus 0 then the group 
$G$ (i) naturally acts $L$-semilinearly and smoothly on the one-dimensional $L$-vector 
space $\mathrm{Der}_K(L)$, (ii) preserves the (three-dimensional) $K$-vector space 
$\Gamma(\mathbb W,\mathcal T_{\mathbb W|K})\subset\mathrm{Der}_K(L)$ of regular vector 
fields on $\mathbb W$, and acts on it $K$-semilinearly. Again by (\ref{generat-fin-dim}), 
$\Gamma(\mathbb W,\mathcal T_{\mathbb W|K})^G$ is a $k$-lattice, while the ratios of its 
elements generate the function field of conic $W$ over $k$ such that $L=K(\mathbb W)=K(W)$. 
\end{proof} 

\begin{example} \label{ex-triv-fdim} The condition (\ref{generat-fin-dim}) of 
Proposition~\ref{isotriv_gen-type_curves} is satisfied, e.g., in the following cases: a$)$ $K=F_{\Psi}$ 
and $G=\Sy_{\Psi}$ (see Theorem~\ref{H90Theorem1.2}), b$)$ $K$ is an algebraically closed extension of 
a field $k$ of infinite transcendence degree, and $G$ is the group of field automorphisms of $K$ 
identical on $k$ (see \cite[Proposition 5.4]{rep}). \end{example} 

The only class of curves not covered by Proposition~\ref{isotriv_gen-type_curves} are 
genus one curves. In this case, the result is less definitive, though it does not use 
the condition (\ref{generat-fin-dim}) of Proposition~\ref{isotriv_gen-type_curves}: 
\begin{proposition} \label{isotriv_elliptic_curves} Let $K$ be a field of characteristic $p\ge 0$, 
$L$ be the function field of an elliptic curve $\mathbb W$ over $K\ ($i.e. of a genus one curve with 
a $K$-rational point$)$.\footnote{Lemma~\ref{triv_torsors} below contains some sufficient 
conditions for the existence of a $K$-rational point on such a genus one curve.} Let $G$ be a permutation 
group, acting smoothly on $L$ and preserving $K$. Suppose that $H^1(G,K)=H^2(G,\mathbb Z/2\mathbb Z)=0$ 
and, in the case $j(\mathbb W)=0$, \begin{enumerate} \item $H^2(G,\{z\in K^{\times}~|~z^3=1\})=0$, 
\item $H^2(G,\mathbb Z/3\mathbb Z)=H^1(G,\mathbb Z/2\mathbb Z)=0$ if $p=3$, 
\item there are no finite index open subgroups in $G$ if $p=2$. \end{enumerate} 
Then $L^G=k(W)$ for an irreducible curve $W$ over $k:=K^G$ and $L=K(W)$. \end{proposition}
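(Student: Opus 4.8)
The plan is to use the Weierstrass model of $\mathbb{W}$ and the Picard-group machinery of Lemma~\ref{Pic-torsion} to show that the $G$-action descends to $k$. First I would fix a $K$-rational point $O\in\mathbb{W}(K)$, giving $\mathbb{W}$ the structure of an elliptic curve, and write down a Weierstrass equation with coefficients in $K$. The key structural input is that the isomorphism class of $\mathbb{W}$ as a $K$-variety is fixed by $G$ (since $G$ acts on $L$ by $K$-semilinear automorphisms, hence on the set of birational types of $K$-curves), but this does \emph{not} immediately mean $\mathbb{W}$ is defined over $k=K^G$; the obstruction lives in descent/twist data controlled by $\Aut(\mathbb{W})$. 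The strategy is to pin down this automorphism group and then kill the relevant cohomological obstructions using the hypotheses $H^2(G,\mathbb{Z}/2\mathbb{Z})=0$ (and the $j=0$ refinements).

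Concretely, I would analyze the canonical exact sequence governing the $G$-action via the group $\Aut(\mathbb{W},O)$ of automorphisms fixing the origin. In characteristic away from $2,3$ this group is $\mu_2$ for $j\neq 0,1728$, $\mu_4$ for $j=1728$, and $\mu_6$ for $j=0$; the special behaviour in characteristics $2$ and $3$ explains the extra hypotheses. The plan is to produce a $G$-equivariant line bundle (or a one-dimensional object of $\Sm_K(G)$) encoding the invariant differential $\omega$ on $\mathbb{W}$, whose class in $\mathrm{Pic}_K(G)\cong H^1(G,K^\times)$ measures the failure of descent. Using Lemma~\ref{Pic-torsion}, the $n$-torsion of this Picard group fits into
\[
H^1(G,\mu_n)\to {}_n\mathrm{Pic}_K(G)\xrightarrow{\beta}(K^\times/K^{\times n})^G/k^\times\xrightarrow{\xi}H^2(G,\mu_n),
\]
and I would take $n=2$ (respectively $n=3$ in the $j=0$ case) so that the hypothesis $H^2(G,\mu_n)=0$ forces the obstruction class $\xi$ to vanish, allowing the twist to be trivialized over $k$. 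The discriminant $\Delta$ and the $j$-invariant, being $G$-invariant elements of $K$, would be shown to lie in $k$ up to the controlled $n$-th power ambiguity.

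The main obstacle I expect is the bookkeeping of descent in small characteristic and at the special $j$-values, where $\Aut(\mathbb{W},O)$ is larger and the group of automorphisms can be non-abelian or wildly ramified (the $p=2$ and $p=3$ cases with $j=0$). This is precisely why the hypotheses list separate conditions: in characteristic $2$ one cannot simply complete the square, so the hypothesis that $G$ has no finite index open subgroups is invoked to force any finite quotient of $G$ (through which a putative twist would factor) to be trivial, thereby bypassing the cohomological computation entirely. I would therefore split the argument into the generic case $j\neq 0$ (handled by the $\mu_2$-twist and $H^2(G,\mathbb{Z}/2\mathbb{Z})=0$) and the supersingular/special case $j=0$ (handled by the additional $\mu_3$ and $\mathbb{Z}/3\mathbb{Z}$ vanishing, or the no-finite-index hypothesis when $p=2$). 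Once the twist is trivial, one concludes that $\mathbb{W}$ descends to an elliptic-or-genus-one curve $W$ over $k$ with $L=K(W)$ and $L^G=k(W)$, completing the proof.
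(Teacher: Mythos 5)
Your plan for the tame case $p\neq 2,3$ matches the paper's proof: put $\mathbb W$ in Weierstrass form, observe that $j(E)\in k$ and that the residual datum is a $G$-fixed class $\gamma(E)\in K^{\times}/K^{\times n}$ with $n\in\{2,4,6\}$, and use the exact sequence of Lemma~\ref{Pic-torsion} together with $H^2(G,\mu_n)=0$ (and the vanishing of $\mathrm{Pic}_K(G)$, which the paper also invokes at this point) to conclude that $\gamma(E)$ comes from $k^{\times}$. Reading the obstruction off the line of invariant differentials is a fair reformulation of the same mechanism, since the map $\beta$ of Lemma~\ref{Pic-torsion} is defined by exactly that kind of construction.

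The genuine gap is in characteristics $2$ and $3$, which you flag as ``the main obstacle'' but then dispose of incorrectly. There the normal forms carry \emph{additive} parameters whose ambiguity is an Artin--Schreier-type subgroup rather than a subgroup of $n$-th powers: for $p=2$, $j\neq 0$, the curve is $y^2+xy=x^3+a_2x^2+a_6$ and $a_2$ is well defined only in $\mathcal A_{2,1}:=\mathrm{coker}(b\mapsto b^2-b)$; for $p=2$, $j=0$, the parameters $a_4$ and $a_6$ live in $\mathcal A_{4,a_3}$ and $\mathcal A_{2,a_3}$, and similarly for $p=3$, $j=0$. Lemma~\ref{Pic-torsion} says nothing about these quotients. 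The paper handles them by a separate d\'evissage: the exact sequence $0\to\mathbb F_q\cap K\to K\xrightarrow{b\mapsto b^q-ab}K\to\mathcal A_{q,a}\to 0$ reduces surjectivity of $k\to\mathcal A_{q,a}^G$ to $H^1(G,K)=\mathrm{Ext}^1_{\Sm_K(G)}(K,K)=0$ and to $H^2(G,\mathbb F_q\cap K)=0$; in addition one must check that certain induced homomorphisms $G\to\{\pm1\}$ (this is what the hypothesis $H^1(G,\mathbb Z/2\mathbb Z)=0$ for $p=3$ is for --- your outline never uses it) and $G\to\mu_3(K)$ are trivial. Your claim that for $p=2$ the no-finite-index-open-subgroup hypothesis ``bypasses the cohomological computation entirely'' is not right: that hypothesis enters only at the last step, where the $G$-orbit of the class of $a_6$ in $\mathcal A_{2,a_3}$ is shown to have order $\le 12$ and hence must be a fixed point; the rest of the characteristic-$2$ argument still needs the $H^1$ and $H^2$ vanishings above. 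As written, your proposal does not establish the proposition in characteristics $2$ and $3$.
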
 
\begin{proof} The Jacobian $E$ (punctured at $0$) of $\mathbb W$ is given by 
the Weierstra\ss\ equation $y^2+a_1xy+a_3y=x^3+a_2x^2+a_4x+a_6$ for some $a_i\in K$ (see 
\cite{Tate,Silverman}). The quintuple $(a_1,a_2,a_3,a_4,a_6)\in K^5$ is defined by $E$ uniquely 
modulo the action of the group 
\[H:=\{\xi:(x,y)\mapsto(x/c^2+d,y/c^3+ex+f)~|~c\in K^{\times},\ d,e,f\in K\}.\] 

The group $G$ fixes the isomorphism class of $E$, and our task is to show that the class of $E$ belongs 
to the image of the natural map $k^5\to(H\backslash K^5)^G$. Obviously, $G$ fixes the $j$-invariant 
$j(E):=(b_2^2-24b_4)^3/\Delta$ of $E$, i.e. $j(E)\in k$. Here $b_2:=a_1^2+4a_2$, $b_4:=2a_4+a_1a_3$, 
$b_6:=a_3^2+4a_6$, $b_8:=a_1^2a_6+4a_2a_6-a_1a_3a_4+a_2a_3^2-a_4^2$, 
$\Delta:=-b_2^2b_8-8b_4^3-27b_6^2+9b_2b_4b_6$.

If $p\neq 2,3$ then $E$ can be given by $y^2=x^3+a_4x+a_6$, while $H$ is reduced to the subgroup 
$\{(x,y)\mapsto(x/c^2,y/c^3)~|~c\in K^{\times}\}\cong K^{\times}$. The isomorphism class of $E$ is determined 
by the invariants $j(E)=1728\frac{4a_4^3}{4a_4^3+27a_6^2}$ and $\gamma(E)\in K^{\times}/K^{\times n_E}$, 
where (i) $n_E=2$ and $\gamma(E):=a_6/a_4\bmod{K^{\times 2}}$ if $a_4a_6\neq 0$; 
(ii) $n_E=4$ and $\gamma(E):=a_4\bmod{K^{\times 4}}$ if $j(E)=1728$; 
(iii) $n_E=6$ and $\gamma(E):=a_6\bmod{K^{\times 6}}$ if $j(E)=0$. 

For $p\in\{2,3\}$, $q\in\{p,p^2\}$ and $a\in k$, denote by $\mathcal A_{q,a}$ 
the cokernel of $\beta_{q,a}\colon K\xrightarrow{b\mapsto b^q-ab}K$. 

If $p=3$ then $E$ can be given either (i) by $y^2=x^3+a_2x^2+a_6$ with $H$ reduced to the subgroup 
$\{(x,y)\mapsto(x/c^2,y/c^3)~|~c\in K^{\times}\}\cong K^{\times}$, or 
(ii) by $y^2=x^3+a_4x+a_6$ (if $j(E)=0$) with $H$ reduced to the subgroup 
$\{(x,y)\mapsto(x/c^2+d,y/c^3)~|~c\in K^{\times},\ d\in K\}\cong K\rtimes_2K^{\times}$. 
Then, in (i), $a_2\bmod{K^{\times 2}}$ is well-defined, while $j(E)=a_2^6/(a_2^2a_4^2-a_2^3a_6-a_4^3)$ and 
$a_2\bmod{K^{\times 2}}$ determine the curve $E$ up to $K$-isomorphism. 
In (ii), $a_4\neq 0$ is well-defined modulo $K^{\times 4}$. It is shown below that our assumptions imply 
$a_4\in k^{\times}K^{\times 4}$, so we may and we do assume that $a_4\in k^{\times}$ is fixed. Then the 
equivalence class of $a_6$ is $\{\zeta^2a_6+c^3+a_4c~|~c\in K,\ \zeta\in\mu_4(K)\}$, so the image of 
$a_6$ in $\mathcal A_{3,a_4}$ determines a homomorphism $G\to\{\pm 1\}$, which should be trivial. 
This means that the class of $a_6$ is presented by an element of $\mathcal A_{3,a_4}^G$. 

If $p=2$ then $j(E):=a_1^{12}/\Delta$, where 
$\Delta=a_1^4(a_1^2a_6-a_4^2)+a_3(a_1^5a_4+a_1^4a_2a_3-a_3^3+a_1^3a_3^2)$. 
(i) If $j(E)\neq 0$, then this is equivalent to $y^2+xy=x^3+a_2x^2+a_6$. 
The group is $\{(x,y)\mapsto(x,y+cx)~|~c\in K\}\cong K$. 
Then $a_2$ is well-defined as an element of the group $\mathcal A_{2,1}$, while $j(E)=a_6^{-1}$ and 
$[a_2]\in\mathcal A_{2,1}$ determine the curve $E$ up to $K$-isomorphism. 
(ii) If $j(E)=0$ then $E$ can be given by an equation $y^2+a_3y=x^3+a_4x+a_6$. 
The group is $\{(x,y)\mapsto(c^2(x+d^2),c^3(y+dx+e))~|~c\in K^{\times},\ d,e\in K\}$. 
Then $a_3\neq 0$ is well-defined modulo $K^{\times 3}$. It is shown below that our assumptions imply 
$a_3\in k^{\times}K^{\times 3}$, so we may and we do assume that $a_3\in k^{\times}$ is fixed. 
Then the equivalence class of $a_4$ is $\{\zeta a_4+c^4+a_3c~|~c\in K,\ \zeta\in\mu_3(K)\}$. 
The image of $a_4$ in $\mathcal A_{4,a_3}$ determines a homomorphism 
$G\to\mu_3(K)$, which should be trivial. This means that the class of $a_4$ is presented by an element 
of $\mathcal A_{4,a_3}^G$. As shown below, $k\to\mathcal A_{3,a_4}^G$ is surjective, so we may and 
we do assume that $a_4\in k$ is fixed. Now the equivalence class of $a_6$ becomes 
$\{a_6+a_4d^2+a_3d^3+e^2+a_3e~|~e\in K,\ 1+(d^4+a_3d)/a_4\in\mu_3(K)\}$, so the $G$-orbit of the class 
of $a_6$ in $\mathcal A_{2,a_3}$ is of order $\le 12$. Assuming that there are no open subgroups in $G$ 
of finite index, we get that the class of $a_6$ is presented by an element of $\mathcal A_{2,a_3}^G$. 
As shown below, $k\to\mathcal A_{2,a_3}^G$ is surjective, so we may and we do assume that $a_6\in k$.

As (i) the group $G$ fixes these invariants, (ii) $H^2(G,\mu_6)=0$, 
(iii) $\mathrm{Pic}_K(G)=0$, Lemma~\ref{Pic-torsion} shows that all these invariants belong to 
$k^{\times}K^{\times n}/K^{\times n}$, i.e. $E$ is the base change to $K$ of an elliptic curve over $k$. 
The exact sequence $0\to\mathbb F_q\cap K\to K\xrightarrow{\beta_{q,a}}K\to\mathcal A_{q,a}\to 0$ 
gives exact sequences $k\to\mathcal A_{q,a}^G\to H^1(G,\mathrm{Im}(\beta_{q,a}))$ 
and $H^1(G,K)\to H^1(G,\mathrm{Im}(\beta_{q,a}))\to H^2(G,\mathbb F_q\cap K)$. But 
$H^1(G,K)=0$, and thus, $k\to\mathcal A_{q,a}^G$ is surjective whenever $H^2(G,\mathbb F_q\cap K)=0$. 
For $q=4$, the latter vanishing holds if and only if $H^2(G,\mathbb F_2)=0$. \end{proof} 

\begin{example} By \cite[Theorem 3.5, Corollary 3.7]{EvansHewitt}, all assumptions of 
Proposition~\ref{isotriv_elliptic_curves} on $G$ are satisfied for $G=\Sy_{\Psi}$. \end{example} 

\begin{lemma} \label{triv_torsors} Let $G$ be a permutation group, and $L|K$ be a smooth $G$-field extension 
of characteristic $0$. Suppose that \begin{enumerate} \item the field $k:=K^G$ is algebraically closed, 
\item $L$ is the function field of a torsor $\mathbb W$ under $A\times_kK$ for an abelian variety $A$ over $k$; 
\item $\mathrm{Pic}_K(G)=H^2(G,\mathbb Z/\ell\mathbb Z)=0$ for all prime $\ell$, 
\item $K$ is unirational over $k$. \end{enumerate} Then $\mathbb W\cong A\times_kK$. \end{lemma}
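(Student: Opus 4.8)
The plan is to show that the class $[\mathbb{W}]$ of the torsor $\mathbb{W}$ in the Weil--Châtelet group $H^1(K,A\times_kK)$ vanishes, which is equivalent to $\mathbb{W}\cong A\times_kK$. Two preliminary observations reduce this to a computation with $G$-invariants. First, $[\mathbb{W}]$ is a torsion class: being a torsor under an abelian variety, the proper variety $\mathbb{W}$ has a closed point and hence a $K'$-point for some finite extension $K'|K$, so $\mathrm{res}_{K'|K}[\mathbb{W}]=0$ and $[K':K]\cdot[\mathbb{W}]=\mathrm{cor}_{K'|K}\,\mathrm{res}_{K'|K}[\mathbb{W}]=0$. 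Second, since $A$ is defined over $k=K^G$, the group $G$ acts on $H^1(K,A\times_kK)$ through its action on $K$ (each $g\in G$ fixes $A\times_kK$ because it fixes $k$), and the hypothesis that the $G$-action on $K$ extends to $L=K(\mathbb{W})$ means exactly that $g^{*}\mathbb{W}\cong\mathbb{W}$ for every $g$, i.e. $[\mathbb{W}]\in H^1(K,A\times_kK)^G$. Thus it suffices to prove that the $G$-invariants of the $\ell$-primary torsion vanish, $\bigl(H^1(K,A\times_kK)[\ell^m]\bigr)^G=0$, for every prime $\ell$ and every $m\ge 1$.

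Next I would exploit unirationality through the identity $A(K)=A(k)$. A $K$-point of $A$ amounts to a rational $k$-map $V\dashrightarrow A$ with $k(V)=K$; composing with a dominant map from a projective space and using that every rational map from a rational variety to an abelian variety is constant in characteristic $0$, I conclude that this map is constant, so $A(K)=A(k)$. As $k$ is algebraically closed, $A(k)$ is divisible, whence $A(K)/\ell^m=0$. The Kummer sequence for $0\to A[\ell^m]\to A\xrightarrow{\ell^m}A\to 0$ then yields a $G$-equivariant isomorphism $H^1(K,A[\ell^m]\times_kK)\xrightarrow{\sim}H^1(K,A\times_kK)[\ell^m]$. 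Since $k$ is algebraically closed of characteristic $0$, all $\ell^m$-torsion of $A$ is rational over $k$, so $A[\ell^m]\times_kK$ is the constant group scheme $(\mathbb{Z}/\ell^m)^{2\dim A}$ with trivial $G$-action; fixing a $G$-trivial identification with $\mu_{\ell^m}^{2\dim A}\subset k$ and applying Kummer theory (Hilbert~90), I obtain a $G$-equivariant isomorphism $H^1(K,A[\ell^m]\times_kK)\cong(K^{\times}/K^{\times\ell^m})^{2\dim A}$.

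It then remains to show $(K^{\times}/K^{\times\ell^m})^G=0$, and this is where the cohomological hypotheses enter, in the style of Lemma~\ref{Pic-torsion}. From the short exact sequence $1\to\mu_{\ell^m}\to K^{\times}\xrightarrow{x\mapsto x^{\ell^m}}K^{\times\ell^m}\to 1$, together with the vanishing of $H^1(G,K^{\times})=\mathrm{Pic}_K(G)$ and of $H^2(G,\mu_{\ell^m})=H^2(G,\mathbb{Z}/\ell^m)$ — the latter deduced from the given $H^2(G,\mathbb{Z}/\ell)=0$ by the dévissage $0\to\mathbb{Z}/\ell\to\mathbb{Z}/\ell^m\to\mathbb{Z}/\ell^{m-1}\to 0$ and induction on $m$ — one finds $H^1(G,K^{\times\ell^m})=0$. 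The sequence $1\to K^{\times\ell^m}\to K^{\times}\to K^{\times}/K^{\times\ell^m}\to 1$ then shows that $(K^{\times}/K^{\times\ell^m})^G$ is the image of $(K^{\times})^G=k^{\times}$, namely $k^{\times}K^{\times\ell^m}/K^{\times\ell^m}$; but $k$ is algebraically closed, so $k^{\times}\subseteq K^{\times\ell^m}$ and this image is $0$. Combining the three steps gives $\bigl(H^1(K,A\times_kK)[\ell^m]\bigr)^G=0$ for all $\ell,m$, and since $[\mathbb{W}]$ is $G$-invariant and torsion, $[\mathbb{W}]=0$.

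The main obstacle is the careful bookkeeping of the two commuting symmetries — the absolute Galois group of $K$, which governs the Weil--Châtelet group, and the permutation group $G$, which acts semilinearly over $k$. The crux is to make the Kummer sequence and the identification $A[\ell^m]\times_kK\cong\mu_{\ell^m}^{2\dim A}$ genuinely $G$-equivariant; this rests on the fact that $A$, its torsion, and the roots of unity are all defined over the fixed field $k=K^G$ and thus carry the trivial $G$-action. One must also verify rigorously that the extension of the $G$-action from $K$ to $L$ translates into $G$-invariance of $[\mathbb{W}]$. Once this equivariance is in place, the finite-group-cohomology vanishing hypotheses do the rest, exactly as in Lemma~\ref{Pic-torsion}.
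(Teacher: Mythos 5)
Your proof is correct and follows essentially the same route as the paper: reduce to the vanishing of $({}_nH^1(K,A(\overline K)))^G$ using that the class of $\mathbb W$ is torsion and $G$-invariant, use unirationality to get $A(K)=A(k)$ divisible and hence the Kummer isomorphism with $H^1(K,{}_nA)$, identify the latter $G$-equivariantly with copies of $K^{\times}/K^{\times n}$ via the torsion of $A$ being rational over $k$, and finally show $(K^{\times}/K^{\times n})^G$ is the (trivial, by divisibility of $k^{\times}$) image of $k^{\times}$ using hypothesis (3). The only cosmetic difference is that you re-derive the last step directly from the two Kummer-type exact sequences for $K^{\times}$, where the paper cites Lemma~\ref{Pic-torsion}.
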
 
\begin{proof} Let $\overline{K}$ be an algebraic closure of $K$. Let 
$H^1(K,-):=H^1(\mathrm{Gal}(\overline{K}|K),-)$ denote the Galois cohomology group. 
The torsors under $A\times_kK$ are classified by $H^1(K,A(\overline{K}))$, 
so it suffices to show the vanishing of $H^1(K,A(\overline{K}))^G$. 
As $H^1(K,A(\overline{K}))$ is a torsion group, it suffices to check the vanishing of 
${}_nH^1(K,A(\overline{K}))^G$ for all $n>1$. Since $A(K)=A(k)$ is $n$-divisible, the sequence 
$0\to{}_nA(\overline{K})\to A(\overline{K})\xrightarrow{\times n}A(\overline{K})\to 0$ 
is exact and gives an isomorphism of Galois cohomology 
$H^1(K,{}_nA(\overline{K}))\xrightarrow{\sim}{}_nH^1(K,A(\overline{K}))$, and thus, we get 
an isomorphism $H^1(K,{}_nA(\overline{K}))^G\xrightarrow{\sim}{}_nH^1(K,A(\overline{K}))^G$. 

As ${}_nA(\overline{K})={}_nA(k)$ is a trivial Galois module, 
$H^1(K,{}_nA(\overline{K}))=\Hom_{\mathrm{cont}}(\mathrm{Gal}(\overline{K}|K),{}_nA(k))=
(K^{\times}/K^{\times n})\otimes_{\mathbb Z}\Hom_{\mathbb Z}(\mu_n,A(k))$. 

By Lemma~\ref{Pic-torsion}, the natural map $k^{\times}\to(K^{\times}/K^{\times n})^G$ 
is surjective, but $k^{\times}$ is $n$-divisible, so $(K^{\times}/K^{\times n})^G=0$, 
and therefore, ${}_nH^1(K,A(\overline{K}))^G=0$. \end{proof} 

\begin{example} \label{exa-triv_torsors} All assumptions of Lemma~\ref{triv_torsors} are satisfied, 
e.g., for $G=\Sy_{\Psi}$ and $K=F_{\Psi}$, where $F$ is unirational over $k$. \end{example}

\subsection{Isotrivial finitely generated \texorpdfstring{$\Sy_{\Psi}$}{}-extensions of 
\texorpdfstring{$F_{\Psi}$}{}} 
\begin{proposition} \label{isotriv_fin_gen-ext} Let $F|k$ be a regular field extension, 
$Y$ be a geometrically irreducible $k$-variety, and 
$\Sy_{\Psi}\xrightarrow{\tau\mapsto\tau^{\iota}}\Aut_{\mathrm{field}}(K|k)$ be a smooth 
$\Sy_{\Psi}$-action on the field $K:=F_{\Psi}(Y)$ extending the $\Sy_{\Psi}$-action on $F_{\Psi}$. 
Then $K^{\Sy_{\Psi}}\cong k(Y')$ for a geometrically irreducible $k$-variety $Y'$ such that 
$Y_{k'}$ and $Y'_{k'}$ are birational for a finite extension $k'|k$. \end{proposition}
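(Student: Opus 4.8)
The plan is to realise the given action as a twist of the \emph{isotrivial} one and then to show that the twist is, up to a finite extension of $k$, trivial. Write $\mathbb Y:=Y\times_kF_{\Psi}$, so that $K=F_{\Psi}(\mathbb Y)$ and the \emph{standard} extension $\rho_0$ of the $\Sy_{\Psi}$-action (with $\sigma$ acting as $\sigma\otimes\mathrm{id}$ on $\mathrm{Frac}(F_{\Psi}\otimes_kk(Y))$) is available. Any smooth extension $\rho$ differs from $\rho_0$ by the assignment $\sigma\mapsto c_{\sigma}:=\rho(\sigma)\rho_0(\sigma)^{-1}\in\Aut_{\mathrm{field}}(K|F_{\Psi})=\mathrm{Bir}_{F_{\Psi}}(\mathbb Y)$, a continuous $1$-cocycle for the $\rho_0$-action of $\Sy_{\Psi}$ on $\mathrm{Bir}_{F_{\Psi}}(\mathbb Y)$. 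First I would record two soft facts. (i) $F_{\Psi}^{\Sy_{\Psi}}=k$ and, more generally, $(F_{\Psi})^{\Sy_{\Psi|S}}=F_S$ for finite $S\subset\Psi$, by the ``fresh variable'' argument: an invariant lies in some $F_S$, and a permutation carrying a coordinate of $S$ to one outside $S$ kills the dependence. (ii) $\mathrm{tr.deg}(K^{\Sy_{\Psi}}|k)\le\dim Y$: any $\dim Y+1$ invariants are algebraically dependent over $F_{\Psi}$, and the coefficients of their normalised (hence unique) minimal relation, being fixed by $\Sy_{\Psi|S}$ for some finite $S$, lie in $k$ exactly as in the Remark of \S\ref{subvar-of-curve-powers} (uniqueness forces $\Sy_{\Psi|S}$-invariance to upgrade to $\Sy_{\Psi}$-invariance).

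Next I would reduce to a finite level. Since $K|F_{\Psi}$ is finitely generated and $\rho$ is smooth, there is a finite $S_0\subset\Psi$ fixing a chosen generating set; setting $K_0:=K^{\Sy_{\Psi|S_0}}$ one gets, by (i), $K_0=F_{S_0}(\mathbb Y_0)$ for a variety $\mathbb Y_0$ over $F_{S_0}$ with $\mathbb Y_0\times_{F_{S_0}}F_{\Psi}$ birational to $\mathbb Y$, while $K^{\Sy_{\Psi}}\subseteq K_0$ is the set of elements of $K_0$ fixed by all of $\Sy_{\Psi}$. I would then trivialise the twist geometrically. Over a separable closure $\overline{k}$ the cocycle $(c_{\sigma})$ becomes, on the level $S_0$, a \emph{finite} group $H$ of birational self-maps of $\mathbb Y_{0,\overline{k}}$ whose action is normalised by $\Sy_{\Psi}$; by the centraliser/continuity argument used in the proof of Proposition~\ref{alg-sub-KPsi} (closedness of the centraliser of $H$, continuity and non-injectivity of $\Sy_{\Psi}\to\Aut(H)$, and topological simplicity of $\Sy_{\Psi}$) this $H$-action must commute with all of $\Sy_{\Psi}$. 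Hence, after base change to $\overline{k}$ and up to $H$, the twisted model descends: $\mathbb Y_{0,\overline{k}}$ is, $H$-equivariantly, the base change of a $\overline{k}$-variety, so $(K^{\Sy_{\Psi}})\otimes_k\overline{k}$ is the function field of a $\overline{k}$-form of $Y_{\overline{k}}$ birational to $Y_{\overline{k}}$.

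Finally I would descend from $\overline{k}$ back to $k$. The residual datum is now purely Galois: a continuous cocycle for $\mathrm{Gal}(\overline{k}|k)$ valued in the finitely many geometric automorphisms trivialising $H$ and the form, so it is split by a finite subextension $k'|k$. This produces a geometrically irreducible $k$-variety $Y'$ with $K^{\Sy_{\Psi}}=k(Y')$ and $Y'_{k'}$ birational to $Y_{k'}$; finite generation of $K^{\Sy_{\Psi}}$ then follows \emph{a posteriori}, the inclusion $k(Y')\subseteq K^{\Sy_{\Psi}}$ together with the bound $\mathrm{tr.deg}(K^{\Sy_{\Psi}}|k)\le\dim Y$ of step (ii) forcing equality of dimensions. \textbf{The main obstacle} is precisely the geometric trivialisation in the middle step: \emph{a priori} $(c_{\sigma})$ takes values in the enormous, non-algebraic group $\mathrm{Bir}_{F_{\Psi}}(\mathbb Y)$, and the real content is to exploit smoothness to confine it to a finite group on each finite level and then to collapse it via the topological simplicity of $\Sy_{\Psi}$ (as in Proposition~\ref{alg-sub-KPsi} and the Remark of \S\ref{subvar-of-curve-powers}); keeping track of inseparability in positive characteristic, where the vanishing $H^{>0}(\Sy_{\Psi},A)=0$ for finite abelian $A$ from \cite{EvansHewitt} is what ultimately limits the remaining twist to a finite Galois form, is the delicate part.
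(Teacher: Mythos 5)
Your setup is the right one --- writing the action as a twist $\tau^{\iota}=\alpha_{\tau}\tau_Y$ of the standard (isotrivial) action by a continuous $1$-cocycle $\alpha$ with values in $\Aut(K|F_{\Psi})$, reducing to a finite level via an open stabilizer $\Sy_{\Psi|J}$, and aiming to kill the twist by the topological simplicity of $\Sy_{\Psi}$ --- and this matches the paper's strategy. But the proof has a genuine gap exactly at the point you flag as ``the main obstacle'': you assert that over $\overline{k}$ the cocycle ``becomes a \emph{finite} group $H$ of birational self-maps'' of the finite-level model, and then apply the centralizer argument of Proposition~\ref{alg-sub-KPsi}. That argument presupposes finiteness of $H$; nothing in your text establishes it. A priori each $\alpha_{\tau}$ is an arbitrary element of $\Aut(F_{J\cup\tau(J)}(Y)|F_{J\cup\tau(J)})$, the cocycle relation $\alpha_{\sigma}\alpha_{\tau}^{\sigma}=\alpha_{\sigma\tau}$ does not by itself confine the values to a finite subgroup, and smoothness only tells you that $\alpha_{\sigma}=1$ for $\sigma\in\Sy_{\Psi|J}$. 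So the step that carries all the content is missing, and the appeal to $H^{>0}(\Sy_{\Psi},A)=0$ for finite abelian $A$ plays no role here (the paper uses that vanishing only for the genus-one case in Proposition~\ref{isotriv_elliptic_curves}).

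The paper closes this gap with a specialization argument you do not have: choosing $\sigma$ with $|J\cup\sigma(J)\cup\sigma\eta(J)|=3|J|$ and a place $F_J\dashrightarrow k'$ into a finite Galois extension $k'|k$ (possible because $k$ is algebraically closed in the finitely generated field $F_{J\cup\eta(J)}(Y)$), one specializes the cocycle identity to write $\alpha_{\eta}=s_{\sigma}^{-1}s_{\sigma\eta}^{\eta}$ as an explicit coboundary over $k'$. After replacing $k'(Y)$ by $s_{\sigma\eta}^{-1}(k'(Y))$, the relation $\alpha_{\eta}\alpha_{\eta}^{\eta}=1$ forces $\alpha_{\eta}$ into $\Aut(k'F_J(Y)|k'F_J)\cap\Aut(k'F_{\eta(J)}(Y)|k'F_{\eta(J)})=\Aut(k'(Y)|k')$, and only \emph{then} does $\xi\mapsto\alpha_{\xi}$ become a homomorphism with open kernel, killed by simplicity. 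Your final descent and dimension count would also need the paper's last step (that $K\cong F_{\Psi}(Y')^{\oplus[K:F_{\Psi}(Y')]}$ in $\Sm_{F_{\Psi}(Y')}(\Sy_{\Psi})$ forces $K=F_{\Psi}(Y')$), since a transcendence-degree bound alone does not identify $K^{\Sy_{\Psi}}$ with $k(Y')$. As written, the proposal is a correct outline with the decisive trivialization step left unproved.
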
 
\begin{proof} As the pointwise stabilizer of $k(Y)$ is open, it contains $\Sy_{\Psi|J}$ for a finite subset 
$J\subset\Psi$. For each $\tau\in\Sy_{\Psi}$, $\tau^{\iota}(k(Y))$ is fixed by $\Sy_{\Psi|\tau(J)}$, 
so $\tau^{\iota}(k(Y))\subseteq K^{\Sy_{\Psi|\tau(J)}}\subseteq K^{\Sy_{\Psi|J\cup\tau(J)}}=
F_{J\cup\tau(J)}(Y)$. 

Let $\eta\in\Sy_{\Psi}$ be an involution such that $J\cap\eta(J)=\varnothing$, and $F'\subseteq F$ be 
a finitely generated field extension of $k$ such that $\eta^{\iota}(k(Y))\subseteq F'_{J\cup\eta(J)}(Y)$. 
As $\eta$ and $\Sy_{\Psi|J}$ generate $\Sy_{\Psi}$, the subfield $F'_{\Psi}(Y)\subseteq K$ is 
$\Sy_{\Psi}$-invariant, so we may further assume that 
$F=F'$. As $F_{J\cup\tau(J)}\tau^{\iota}(k(Y))\subseteq F_{J\cup\tau(J)}(Y)$ and 
$F_{J\cup\tau^{-1}(J)}(\tau^{-1})^{\iota}(k(Y))\subseteq F_{J\cup\tau^{-1}(J)}(Y)$ (or equivalently, 
$F_{J\cup\tau(J)}(Y)\subseteq F_{J\cup\tau(J)}\tau^{\iota}(k(Y))$), 
we get $F_{J\cup\tau(J)}\tau^{\iota}(k(Y))=F_{J\cup\tau(J)}(Y)$. 

This gives rise to a field automorphism 
$\alpha_{\tau}\in\Aut(F_{J\cup\tau(J)}(Y)|F_{J\cup\tau(J)})$ extending the $F_{J\cup\tau(J)}$-algebra 
homomorphism $F_{J\cup\tau(J)}\otimes_kk(Y)\xrightarrow{id\cdot\tau^{\iota}}F_{J\cup\tau(J)}(Y)$. 

For each subset $I\subset\Psi$, the group $\Aut(F_I(Y)|F_I)$ is naturally embedded into $\Aut(K|F_{\Psi})$. 
Denote by $\Sy_{\Psi}\xrightarrow{\tau\mapsto\tau_Y}\Aut_{\mathrm{field}}(K|k(Y))$ the embedding given 
by the standard $\Sy_{\Psi}$-action on $F_{\Psi}$ and the trivial $\Sy_{\Psi}$-action on $k(Y)$. 
Clearly, $\sigma^{\iota}=\alpha_{\sigma}\sigma_Y$ for any $\sigma\in\Sy_{\Psi}$, and $\Sy_{\Psi}$ 
acts on $\Aut(K|F_{\Psi})$ by conjugation: $\sigma\colon\alpha\mapsto\alpha^{\sigma}:=
\sigma_Y\alpha\sigma_Y^{-1}$ for all $\sigma\in\Sy_{\Psi}$ and $\alpha\in\Aut(K|F_{\Psi})$. 

Then $\alpha_{\sigma\tau}(\sigma\tau)_Y=(\sigma\tau)^{\iota}=\sigma^{\iota}\tau^{\iota}=
\alpha_{\sigma}\sigma_Y\alpha_{\tau}\tau_Y=\alpha_{\sigma}\alpha_{\tau}^{\sigma}(\sigma\tau)_Y$ 
for all $\sigma,\tau\in\Sy_{\Psi}$, so $\alpha_{\sigma}\alpha_{\tau}^{\sigma}=\alpha_{\sigma\tau}$. 

Fix some $\sigma\in\Sy_{\Psi}$ such that $|J\cup\sigma(J)\cup\sigma\eta(J)|=3|J|$. As the field 
$F_{J\cup\eta(J)}(Y)$ is finitely generated over $k$ and $k$ is algebraically closed in $F_{J\cup\eta(J)}(Y)$, 
there is a finite Galois extension $k'|k$ and a place $F_J\dashrightarrow k'$ 
such that the compositions of $\alpha_{\sigma}$ and of $\alpha_{\sigma\eta}$ with the induced place 
$s\colon F_{J\cup\sigma(J)\cup\sigma\eta(J)}(Y)\dashrightarrow F_{\sigma(J)\cup\sigma\eta(J)}(Y)\otimes_kk'$ 
are well-defined. Then the cocycle condition implies $s(\alpha_{\sigma})\alpha_{\eta}^{\sigma}=s(\alpha_{\sigma\eta})$, 
where $s(\alpha_{\xi})\in\Aut(k'F_{\xi(J)}(Y)|k'F_{\xi(J)})$ for both $\xi\in\{\sigma,\sigma\eta\}$. Equivalently, 
$\alpha_{\eta}=s_{\sigma}^{-1}s_{\sigma\eta}^{\eta}$, where 
$s_{\xi}:=s(\alpha_{\xi})^{\xi^{-1}}\in\Aut(k'F_J(Y)|k'F_J)$ for both $\xi\in\{\sigma,\sigma\eta\}$. 

Replacing the subfield $k'(Y)$ of the field $K\otimes_kk'$ by $s_{\sigma\eta}^{-1}(k'(Y))$, we may assume 
further that $\alpha_{\eta}\in\Aut(k'F_J(Y)|k'F_J)$ and still $\alpha_{\sigma}=1$ for all 
$\sigma\in\Sy_{\Psi|J}$. The 1-cocycle relation $\alpha_{\eta}\alpha_{\eta}^{\eta}=1$ implies that 
$\alpha_{\eta}=(\alpha_{\eta}^{\eta})^{-1}\in\Aut(k'F_J(Y)|k'F_J)\cap\Aut(k'F_{\eta(J)}(Y)|k'F_{\eta(J)})
=\Aut(k'(Y)|k')$. Therefore, the map $\Sy_{\Psi}\to\Aut(k'(Y)|k')$, $\xi\mapsto\alpha_{\xi}$, is 
a homomorphism with open kernel, which is trivial by the simplicity of $\Sy_{\Psi}$. This means that 
$(K\otimes_kk')^{\iota(\Sy_{\Psi})}=k'(Y)$. As the $\mathrm{Gal}(k'|k)$-action on $K\otimes_kk'$ commutes 
with the $\Sy_{\Psi}$-action, $K^{\Sy_{\Psi}}=((K\otimes_kk')^{\mathrm{Gal}(k'|k)})^{\Sy_{\Psi}}
=((K\otimes_kk')^{\Sy_{\Psi}})^{\mathrm{Gal}(k'|k)}=k'(Y)^{\mathrm{Gal}(k'|k)}=:k(Y')$. 
Then $K$ is a finite $\Sy_{\Psi}$-extension of $F_{\Psi}(Y')$, but 
$K\cong F_{\Psi}(Y')^{\oplus[K:F_{\Psi}(Y')]}$ in $\Sm_{F_{\Psi}(Y')}(\Sy_{\Psi})$, so 
$k(Y')=K^{\Sy_{\Psi}}\cong k(Y')^{\oplus[K:F_{\Psi}(Y')]}$, and thus, $K=F_{\Psi}(Y')$. \end{proof} 

\begin{remark} The $k$-varieties $Y$ and $Y'$ in Proposition~\ref{isotriv_fin_gen-ext} need not be 
birational. E.g. a plane quadric with no rational points $Y$ and $Y':=\mathbb P^1_k$ are not birational, 
while $Y_F$ has a rational point if $F\cong k(Y)$, so $F_{\Psi}(Y)=F_{\Psi}(Y')$. \end{remark}

\appendix 
\section{The spectrum of \texorpdfstring{$\Sm_{F_{\Psi}}(\Sy_{\Psi})$}{} 
(Addendum to \texorpdfstring{\cite{H90}}{[5]})} \label{Addendum-to-H90}

The following result differs from \cite[Theorem 1.2]{H90} only in that there is no restriction on the 
transcendence degree of $F|k$. The same proof goes through but with a little modification: 
certain fields should be embedded into a field of Hahn $\Gamma$-power series\footnote{Let $k$ be a field, 
and $\Gamma$ be a totally ordered abelian group. The field $k((\Gamma))$ of {\sl Hahn power series} over 
$k$ with value group $\Gamma$ is the set of formal expressions of the form $\sum_{s\in\Gamma}a_s\cdot s$, 
where $a_s\in k$ and the set $\{s\in\Gamma\ |\ s<\gamma,\ a_s\neq 0\}$ is finite for all 
$\gamma\in\Gamma$. The addition and multiplication are defined in the obvious way.} for any {\sl 
sufficiently large} totally ordered divisible group $\Gamma$, instead of just Hahn $\mathbb Q$-power series. 

\begin{theorem} \label{H90Theorem1.2} Let $\Psi$ be an infinite set, and $F|k$ be a non-trivial 
regular field extension. Then {\rm (i)} the object $F_{\Psi}$ is an injective cogenerator of the category 
$\Sm_K(\Sy_{\Psi})$ for any $\Sy_{\Psi}$-invariant subfield $K\subseteq F_{\Psi};$ {\rm (ii)} the objects 
$F_{\Psi}\langle\binom{\Psi}{s}\rangle\cong\bigwedge^s_{F_{\Psi}}F_{\Psi}\langle\Psi\rangle$ for all 
integer $s\ge 0$, where $\binom{\Psi}{s}$ denotes the set of all subsets of $\Psi$ of cardinality $s$, 
present all isomorphism classes of indecomposable injectives in $\Sm_{F_{\Psi}}(\Sy_{\Psi})$. \end{theorem}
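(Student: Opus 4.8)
The plan is to reproduce the proof of \cite[Theorem~1.2]{H90} essentially verbatim, since that argument already yields both assertions when $\mathrm{tr.deg}(F|k)=1$, and to isolate the one place where the transcendence-degree hypothesis is actually used. The representation-theoretic skeleton is insensitive to $\mathrm{tr.deg}(F|k)$. By \cite[Theorem~3.18]{H90} the category $\Sm_{F_{\Psi}}(\Sy_{\Psi})$ is locally noetherian, so every injective is a direct sum of indecomposable injectives, each the injective hull of a uniform object, and arbitrary direct sums of injectives are again injective (this is the mechanism already used in the footnotes to Lemma~\ref{level-1_quotients}). Objects are stratified by their \emph{level}, the least $s$ for which the object is generated by vectors fixed by $\Sy_{\Psi|S}$ for some $S\subset\Psi$ with $|S|=s$; the associated filtration, together with the identification $F_{\Psi}\langle\binom{\Psi}{s}\rangle\cong\bigwedge^s_{F_{\Psi}}F_{\Psi}\langle\Psi\rangle$, is combinatorial in $\Psi$ alone and transfers unchanged. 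Lemma~\ref{level-1_quotients} is exactly the $s\le 1$ instance of this bookkeeping.

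The sole analytic input --- the only ingredient sensitive to transcendence degree --- appears when an intermediate field must be embedded into a \emph{maximally complete} valued field having algebraically closed residue field and divisible value group, in order both to separate points (hence to verify the cogenerator property) and to solve the lifting problems that certify injectivity and indecomposability. When $\mathrm{tr.deg}(F|k)=1$ the field $F\otimes_k\overline{k}$ carries a rank-one valuation with values in $\mathbb{Q}$ and embeds into the Puiseux field $\varinjlim_N\overline{k}((X^{1/N}))=\overline{k}((\mathbb{Q}))$, a Hahn $\mathbb{Q}$-power series field, which is what \cite{H90} exploits. For arbitrary $\mathrm{tr.deg}(F|k)$ I would replace $\mathbb{Q}$ by a totally ordered divisible abelian group $\Gamma$ large enough to receive the value group of a suitable valuation, and embed the fields at issue into $\overline{k}((\Gamma))$. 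Such embeddings exist by the Kaplansky theory of maximally complete fields once $\Gamma$ is taken large enough, and $\overline{k}((\Gamma))$ is maximally complete, algebraically closed (a classical result of Maclane and Kaplansky), with residue field $\overline{k}$ and divisible value group $\Gamma$ --- precisely the four formal properties of the Puiseux field that \cite{H90} invokes.

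Granting the (routine) verification that every appeal to the Puiseux field in \cite{H90} uses only those four properties, each such step transfers to $\overline{k}((\Gamma))$ without change. This gives part~(i) over $K=F_{\Psi}$: the simple object $F_{\Psi}$ is a cogenerator, hence injective. The passage to an arbitrary $\Sy_{\Psi}$-invariant subfield $K\subseteq F_{\Psi}$ is then formal: extension of scalars $-\otimes_KF_{\Psi}$ is exact and left adjoint to restriction $\Sm_{F_{\Psi}}(\Sy_{\Psi})\to\Sm_K(\Sy_{\Psi})$, so restriction preserves injectives and $F_{\Psi}$ remains injective over $K$, while any $M\in\Sm_K(\Sy_{\Psi})$ embeds into $M\otimes_KF_{\Psi}$, which in turn embeds into a product of copies of $F_{\Psi}$, so $F_{\Psi}$ stays a cogenerator over $K$. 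For part~(ii) I would show that each $\bigwedge^s_{F_{\Psi}}F_{\Psi}\langle\Psi\rangle$ is injective and indecomposable --- the latter by showing it is uniform (equivalently, that its endomorphism ring is local) --- and that they are pairwise non-isomorphic of distinct levels; then, assigning to an arbitrary indecomposable injective its level $s$, one matches it with $F_{\Psi}\langle\binom{\Psi}{s}\rangle$ through the injective hulls of the corresponding uniform level-$s$ objects.

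The main obstacle will be the choice of $\Gamma$ together with the verification just described: one must confirm that no step of \cite{H90} secretly relies on the value group being archimedean (rank one) or on a Newton--Puiseux finiteness special to curves, and that a single $\Gamma$ can be fixed once and for all --- large enough to accommodate simultaneously all the finitely generated subextensions of $F|k$ that arise. I expect this to be laborious but not conceptually hard, since the only substantive demands on $\overline{k}((\Gamma))$ are maximal completeness, the correct residue field, and a divisible value group, all of which hold for every sufficiently large divisible $\Gamma$.
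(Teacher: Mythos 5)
Your proposal matches the paper's proof: the paper likewise observes that the only transcendence-degree-sensitive step of \cite{H90} is the embedding into Hahn power series, and fixes it exactly as you suggest --- by choosing a totally ordered $\mathbb Q$-vector space $\Gamma$ with $\mathrm{tr.deg}(k((\Gamma))|k)\ge\mathrm{tr.deg}(F|k)$, embedding $F_J\hookrightarrow\overline{k}((\Gamma))$ via MacLane's universality theorem \cite{MacLane}, and taking the constant term of the resulting series expansion to build the retraction $F_{\Psi}\to F_{\Psi\smallsetminus J}$ that certifies injectivity. The only divergence is in part (ii), where in place of the uniformity/level analysis you sketch, the paper gives a short reduction to part (i) --- $F_{\Psi}\langle\binom{\Psi}{s}\rangle$ is exhibited as a direct summand of $F_{\Psi}(\Psi)\cong F(T)_{\Psi}$ via the subfield generated by the squares of the variables, hence is injective --- and the remaining classification step is quoted from \cite{H90}, pp.~2331--2332.
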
 

\begin{remark} Let $G$ be a permutation group, $K$ be a smooth 
$G$-field, $A\subset K$ be a $G$-invariant subring, and $V$ be an injective object of $\Sm_K(G)$. 
Then $V$ is an injective object of $\Sm_A(G)$, since the functor $\Hom_{\Sm_A(G)}(-,V)$ is the composition 
of exact functors $\Sm_A(G)\xrightarrow{K\otimes_A(-)}\Sm_K(G)$ and $\Hom_{\Sm_K(G)}(-,V)$. Evidently, 
any cogenerator of $\Sm_K(G)$ is simultaneously an injective cogenerator of $\Sm_A(G)$. \end{remark} 

\begin{proof} (i) This is \cite[Theorem 3.10]{H90}, but the proof there depends on the injectivity of 
$F_{\Psi}$ (\cite[Proposition 3.8]{H90}). The argument of \cite[Proposition 3.8]{H90} uses the cardinality 
restriction on transcendence degree of $F|k$ to construct, for any finite $J\subset\Psi$, a morphism of 
$F_{\Psi\smallsetminus J}\langle\Sy_{\Psi|J}\rangle$-modules $\xi\colon F_{\Psi}\to F_{\Psi\smallsetminus J}$ 
identical on $F_{\Psi\smallsetminus J}$. We consider $F_{\Psi}$ as the fraction field of the algebra 
$F_{\Psi\smallsetminus J}\otimes_kF_J$ and embed both, the algebra and the field, into a field of series with 
coefficients in $F_{\Psi\smallsetminus J}\otimes_k\overline{k}$ for an algebraic closure $\overline{k}$ of $k$.  

Fix a totally ordered $\mathbb Q$-vector space $\Gamma$, such that transcendence degree over $k$ of 
the field $k((\Gamma))$ of Hahn power series is at least that of $F|k$. 
By \cite{MacLane}, there is a field embedding $F_J\hookrightarrow\overline{k}((\Gamma))$ 
over $k$, so the $\Sy_{\Psi|J}$-field $F_{\Psi}$ becomes a subfield of 
$(F_{\Psi\smallsetminus J}\otimes_k\overline{k})((\Gamma))$ with $\Sy_{\Psi|J}$ acting on 
the coefficients. Define $\widetilde{\xi}\colon F_{\Psi}\to F_{\Psi\smallsetminus J}\otimes_k\overline{k}$ 
as the `constant term'\ of the Hahn power series expression: $\sum_{s\in\Gamma}a_s\cdot s\mapsto a_0$. 
Fix a $k$-linear functional $\nu\colon\overline{k}\to k$ identical on $k$. Finally, we define 
$\xi\colon F_{\Psi}\to F_{\Psi\smallsetminus J}$ as $(id_{F_{\Psi\smallsetminus J}}\cdot\nu)\circ\widetilde{\xi}$. 

(ii) Let us check that $F_{\Psi}\langle\binom{\Psi}{s}\rangle$ is injective. 

Let $K\subset F_{\Psi}(\Psi)$ be the subfield generated over $F_{\Psi}$ by squares of 
the elements of $\Psi$. There is an isomorphism of $F_{\Psi}\langle\Psi\rangle$-modules 
$\bigoplus_{s\ge 0}K\langle\binom{\Psi}{s}\rangle\xrightarrow{\sim}F_{\Psi}(\Psi)$, 
$[S]\mapsto\prod_{t\in S}t$, so each $K\langle\binom{\Psi}{s}\rangle$ is isomorphic to a direct 
summand of the object $F_{\Psi}(\Psi)$ of $\Sm_K(\Sy_{\Psi})$. 

As $K$ and $F_{\Psi}(\Psi)$ are isomorphic $\Sy_{\Psi}$-field extensions of $F_{\Psi}$ (under 
$t^2\mapsto t$ for all $t\in\Psi$), they are isomorphic as objects of $\Sm_{F_{\Psi}}(\Sy_{\Psi})$. 

By (i), $F_{\Psi}(\Psi)=F(T)_{\Psi}$ is injective in $\Sm_{F_{\Psi}}(\Sy_{\Psi})$, so each 
$K\langle\binom{\Psi}{s}\rangle$ is an injective object of $\Sm_{F_{\Psi}}(\Sy_{\Psi})$. As 
$F_{\Psi}$ is injective as well, the inclusion $F_{\Psi}\hookrightarrow K$ admits a splitting 
$K\xrightarrow{\pi}F_{\Psi}$. Then the inclusion 
$F_{\Psi}\langle\binom{\Psi}{s}\rangle\hookrightarrow K\langle\binom{\Psi}{s}\rangle$ splits as well: 
$\sum_ia_i[S_i]\mapsto\sum_i\pi(a_i)[S_i]$, and thus, $F_{\Psi}\langle\binom{\Psi}{s}\rangle$ 
is an injective object of $\Sm_{F_{\Psi}}(\Sy_{\Psi})$. 

It remains to show that any smooth finitely generated $F_{\Psi}\langle\Sy_{\Psi}\rangle$-module 
can be embedded into a direct sum of $F_{\Psi}\langle\binom{\Psi}{s}\rangle$ for several integer 
$s\ge 0$. But this is already shown in \cite[Proof of Theorem 1.2, pp.2331--2332]{H90}. \end{proof} 

\section{Examples of algebraic groups acting on a field; fixed fields} \label{group-actions-on-fields}
Let $L|k$ be a field extension, and $H$ be an algebraic $k$-group. An $H$-{\sl action} on $L$ is a 
rational $k$-map $H\times_kL\xrightarrow{\tau}\mathop{\mathrm{Spec}}(L)$ which is a) associative, i.e. 
the compositions $H\times_kH\times_kL\rightrightarrows H\times_kL\xrightarrow{\tau}\mathop{\mathrm{Spec}}(L)$, 
where the parallel arrows are $(h,h',a)\mapsto(hh',a)$ and $(h,h',a)\mapsto(h,\tau(h',a))$, coincide, and b) 
such that the endomorphism $(h,a)\mapsto(h,\tau(h,a))$ of the $k$-scheme $H\times_kL$ is invertible. 
If $H=\mathop{\mathrm{Spec}}(A)$ is affine, where $A$ is a Hopf $k$-algebra with a 
comultiplication $A\xrightarrow{\Delta}A\otimes_kA$, this is equivalent to the bijectivity 
of $id_A\cdot\tau^*$ on $\widetilde{A\otimes_kL}$ and to the coincidence of the compositions 
$L\xrightarrow{\tau^*}\widetilde{A\otimes_kL}\rightrightarrows\widetilde{A\otimes_kA\otimes_kL}$, 
where the parallel arrows are $\Delta\otimes id_L$ and $id_A\otimes\tau^*$, while $\widetilde{\ \ }$ 
denotes total ring of fractions (i.e. invertion of all elements that are not zero divisors). 

By definition, the {\sl fixed field} $L^H$ of $H$ in $L$ is the coordinate field of the coequalizer 
of the morphisms $\tau,\mathrm{pr}_L\colon H\times_kL\rightrightarrows\mathop{\mathrm{Spec}}(L)$ (or $L^H$ 
is the equalizer of embeddings $\tau^*,1\otimes id_L\colon L\rightrightarrows\widetilde{A\otimes_kL}$ 
if $H=\mathop{\mathrm{Spec}}(A)$). 

Obviously, (i) any subgroup $H_0$ of the group $H(k)$ (of $k$-rational points of $H$) fixes in $L$ the same 
subfield $\{a\in L~|~a^g=a\mbox{ for all }g\in H_0\}$ as the Zariski closure of $H_0$ in $H$; (ii) the 
equalizer of any set of field embeddings of $L$ into a reduced ring is a relatively perfect subfield in $L$. 

But here is an example of a {\sl non-perfect} subfield fixed by an {\sl algebraic} $k$-group: 
\begin{example}[Fixed fields of $\alpha_{p^n,k}$] \label{alpha_p-invar} Let $F|k$ 
be a regular field extension of characteristic $p>0$ admitting an element $X\in F\smallsetminus k$ such that 
$F|k(X)$ is {\sl algebraic} separable, and $n\ge 1$ be an integer. Then, for each $\lambda\in kF^{p^n}$, 
there is a unique $k[B]$-algebra endomorphism $\xi_{\lambda}$ of $F_{\Psi}[B]/(B^{p^n})$ such that 
(i) $\xi_{\lambda}$ is identical modulo $(B)$, (ii) $u:=X(u)\mapsto u+\lambda(u)B$ for all $u\in\Psi$. 

The endomorphism $\xi_{\lambda}$ is invertible, $\Sy_{\Psi}$-equivariant and $(kF^{p^n})_{\Psi}$-linear. 

The automorphism $\xi_{\lambda}$ can be considered as an action $F_{\Psi}\to F_{\Psi}[B]/(B^{p^n})$ on 
the {\sl field} $F_{\Psi}$ of the infinitesimal subgroup $\alpha_{p^n,k}:=\mathrm{Spec}(k[B]/(B^{p^n}))$ 
of the additive group $\mathbb G_{\mathrm{a},k}:=\mathop{\mathrm{Spec}}(k[B])$. 

For each subset $\Lambda\subset kF^{p^n}$, denote by $K_{\Lambda}$ the subfield of $F_{\Psi}$ 
fixed by $\xi_{\lambda}$ for all $\lambda\in\Lambda$. 

Then $kF^{p^n}\xrightarrow{\lambda\mapsto\xi_{\lambda}}\Aut_{k[B]\mbox{-alg}}(F_{\Psi}[B]/(B^{p^n}))$ is a group 
homomorphism, so it is clear that $K_{\Lambda}$ depends only on the $k$-linear span of $\Lambda$. For this 
reason, we assume further that $\Lambda$ is a $d$-dimensional $k$-vector subspace. Set 
$A:=\mathrm{Sym}_k^{\bullet}\Lambda^{\vee}/((\Lambda^{\vee})^{p^n})$, where $\Lambda^{\vee}:=\Hom_k(\Lambda,k)$. 
For each finite-dimensional 
$\Lambda$, denote by $\varphi_{\Lambda}\in\Lambda\otimes_k\Lambda^{\vee}\subset F\otimes_kA$ the element 
corresponding to the identity endomorphism of $\Lambda$. For each $u\in\Psi$, the isomorphism 
$(u):F\xrightarrow{\sim}F_u$ sends $\varphi_{\Lambda}$ to an element 
$\varphi_{\Lambda}(u)\in(kF^{p^n})_u\otimes_k\Lambda^{\vee}\subset F_u\otimes_kA\subset F_{\Psi}\otimes_kA$. 
This gives rise to an action $F_{\Psi}\xrightarrow{X(u)\mapsto X(u)+\varphi(u)}F_{\Psi}\otimes_kA$ on 
$F_{\Psi}$ of the $\dim_k\Lambda$-th cartesian power of $\alpha_{p^n,k}$. 

If $\{\lambda_1,\dots,\lambda_d\}$ is a basis of $\Lambda$ then $[F_{\Psi}:K_{\Lambda}]=p^{nd}$, and 
$K_{\Lambda}$ is generated over $kF_{\Psi}^{p^n}$ by the elements 
$\sum_{\sigma\in\Sy_{\{0,\dots,d\}}}\mathrm{sgn}(\sigma)\lambda_1(u_{\sigma(1)})\cdots\lambda_d(u_{\sigma(d)})
u_{\sigma(0)}$ for all $u_0,\dots,u_d\in\Psi$. 
\begin{proof} The existence and uniqueness of $\xi_{\lambda}$ is obvious. The (co)associativity of $\xi_{\lambda}$ 
is clear: $u\mapsto u+\lambda(u)B\mapsto u+\lambda(u)(B+B')=u+\lambda(u)B'+\lambda(u+\lambda(u)B')B$. 
The (co)commutativity: 
$u\mapsto u+\lambda(u)B\mapsto u+\lambda'(u)B+\lambda(u+\lambda'(u)B)B=u+(\lambda(u)+\lambda'(u))B$. 
\end{proof} \end{example}

\vspace{5mm}

\noindent 
{\sl Acknowledgements.} {\small The study has been funded within the framework 
of the HSE University Basic Research Program. 
I am grateful to Dmitry Kaledin and Alexander Kuznetsov for very helpful suggestions and to Tao Qin 
for useful discussions. Comments of Anton Fonarev, Sergey Gorchinskiy and of the referee 
led to significant improvements of the original text.} 

\vspace{5mm}

\end{document}